\theoremstyle{plain}
\newtheorem{lemma}{Lemma}[section]
\newtheorem{thm}[lemma]{Theorem}
\newtheorem{dfn}[lemma]{Definition}
\newtheorem{remark}[lemma]{Remark}
\newtheorem{prop}[lemma]{Proposition}
\newtheorem{assumption}{Assumption}
\begin{document}
\title[Weak intermittency and second moment bound of a fully discrete scheme]{Weak intermittency and second moment bound of a fully discrete scheme for stochastic heat equation}
\author{Chuchu Chen, Tonghe Dang, Jialin Hong}
\address{LSEC, ICMSEC,  Academy of Mathematics and Systems Science, Chinese Academy of Sciences, Beijing 100190, China,
\and 
School of Mathematical Sciences, University of Chinese Academy of Sciences, Beijing 100049, China}
\email{chenchuchu@lsec.cc.ac.cn; dangth@lsec.cc.ac.cn; hjl@lsec.cc.ac.cn}
\thanks{This work is funded by National Natural Science Foundation of China (No. 11971470,
No. 11871068, No. 12031020, No. 12022118)}
\begin{abstract} In this paper, we first prove the weak intermittency, and in particular the sharp exponential order $C\lambda^4t$ of the second moment of the exact solution of the stochastic heat equation with multiplicative noise and periodic boundary condition, where $\lambda>0$ denotes the level of the noise. 
In order to inherit numerically these intrinsic properties of the original equation, we introduce a fully discrete scheme, whose spatial direction is based on the finite difference method and temporal direction is based on the $\theta$-scheme. 
We prove that the second moment of numerical solutions of both spatially semi-discrete and fully discrete schemes grows at least as $\exp\{C\lambda^2t\}$ and at most as $\exp\{C\lambda^4t\}$ for large $t$ under natural conditions, which implies the weak intermittency of these numerical solutions. Moreover, a renewal approach is applied to show that both of the numerical schemes could preserve the sharp exponential order $C\lambda^4t$ of the second moment of the exact solution for large spatial partition number.
\end{abstract}
\keywords {Stochastic heat equation $\cdot$ Weak intermittency $\cdot$ Sharp exponential order $\cdot$ Numerical scheme $\cdot$ Discrete Green function}

\maketitle
\section{Introduction}

In this paper, we study numerically the preservation of the weak intermittency and the sharp exponential order of the second moment of the exact solution of the following stochastic heat equation (SHE) with periodic boundary condition: 
\begin{align}
\begin{cases}
\partial_t u(t,x)=\partial^2_x u(t,x)+\lambda\sigma(u(t,x))\dot{W}(t,x),\quad &(1.1a)\vspace{1 ex}\\
u(t,0)=u(t,1),\quad t\ge 0,\quad &(1.1b) \vspace{1 ex}\\
u(0,x)=u_0(x),\quad 0\leq x\leq 1.\quad &(1.1c)
\end{cases}\label{she}
 \end{align}
Here, $\dot{W}(t,x), t\ge0,x\in[0,1]$ denotes the space-time white noise with respect to some given filtered probability space $\big(\Omega,\mathcal{F}, \left \{\mathcal{F}_t\right \}_{t\ge 0},\mathbb{P}\big)$, $\lambda>0$ denotes the level of the noise, $\sigma: \mathbb{R} \rightarrow \mathbb{R}$ is a globally Lipschitz function, and $u_0$ is a bounded, non-negative, non-random and measurable function. Eq. $(1.1a)$ characterizing the evolution of a field in a random media, arises in several settings, for example generalized Edwards-Wilkinson models for the roughening of surfaces, continuum limits of particle processes and
continuous space parabolic Anderson models (PAMs) (see \cite{setting07} and references therein). 

For a random field of multiplicative type, intermittency is a universal phenomenon (see \cite{ZRS90}). It originates from the physics literature on turbulence and refers to the chaotic behavior of a random field that develops unusually high peaks over small areas (see \cite{dynamo93, feynman95}).
Recall that the upper $p$th moment Lyapunov exponent of $u$ at $x$ is defined as $\bar{\gamma}_p(x):=\limsup_{t\rightarrow\infty}\frac{\log\left|u(t,x)\right|^p}{t},\forall p>0.$ The random field $u$ is called intermittent (also called fully intermittent) if for all $x,$ the mapping $p\rightarrow \bar{\gamma}_p(x)/p$ is strictly increasing on $p\in[1,\infty).$ This mathematical definition implies that the appearance of high peaks giving the main contribution to the statistical moments of the solution, leads to the non-trivial exponential behaviors of the moments of the solution. The existing research on the intermittency usually begins with a Feynmann-Kac type formula to calculate the explicit expression of the $p$th moment Lyapunov exponent of the solution. For example, in the case of $\sigma(u)=u,$ which refers to the famous PAM, it is shown in \cite{anderson94, chenxia15} that the solution of PAM is intermittent both in the continuous case and in the spatially discrete case. In the nonlinear case, it is difficult to obtain the explicit expression of the $p$th moment Lyapunov exponent, so there comes a notion called weak intermittency, which means for all $x$, $\bar{\gamma}_2(x)>0$ and $\bar{\gamma}_p(x)<\infty$  $(\forall p\ge 2)$. It is shown in \cite{KF09} that the weak intermittency implies intermittency whenever the comparison principle holds.

For Eq. $(1.1a)$ in the whole space, the weak intermittency of the solution both on the real line (\cite{KF09}) and on the lattice (\cite{chenle17, FD12, GJK15}) has been studied. For the continuous Eq. $(1.1a)$ with various boundary conditions in a bounded domain, the weak intermittency and in particular the effects of the noise level $\lambda$ on the second moment of the solution have been extensively studied (see \cite{foondun14, KDK15, dissipation20, xie16}). More precisely, for the case with Dirichlet boundary condition, it is proved in \cite{foondun14, xie16} that the second moment of the solution grows exponentially fast if the noise level $\lambda$ is large enough, and decays exponentially if $\lambda$ is small. While for the case with Neumann boundary condition, it is shown in \cite{foondun14} that the second moment of the solution grows exponentially fast no matter what $\lambda$ is. A fine result is proved in \cite{KDK15}, which suggests that the second moment of the solution has sharp exponential order $C\lambda^4t$. As for the case with periodic boundary condition, based on the analysis of the Green function of \eqref{she}, it is proved in Section \ref{sec2} that \eqref{she} is weakly intermittent and the second moment of the solution has the sharp exponential order $C\lambda^4t.$

In general, the solutions of stochastic partial differential equations can not be solved exactly, thus numerical methods provide a qualitative and quantitative approach to investigate the properties of the exact solution, which have been developed in the past three decades. Many spatially and temporally discrete schemes, for instance the finite difference method, the finite element method, explicit and implicit Euler method and exponential integrator method, have been well studied (see e.g. \cite{Cohen20, CHL17, CHJ19, CHJ192, CCHS20, CH19, CH20, CHL172, CHS21, Gy98, Gy99, Walsh05}).
It is natural to ask:
\begin{itemize}
\item[$(Q_1)$] Is there a numerical scheme inheriting the weak intermittency of \eqref{she}? 
\item[$(Q_2)$] Furthermore, can the numerical scheme preserve the sharp exponential order of the second moment of the exact solution?
\end{itemize}

Considering the above two questions, we apply the finite difference method to \eqref{she} to obtain a spatially semi-discrete scheme, which is convergent to the exact solution in the mean square sense with order $1/2$. By finding the explicit expression of the semi-discrete Green function, the continuous version solution of the semi-discrete scheme can be written into a compact form, which plays a key role in the analysis of the weak intermittency. 
With the detailed analysis on the integral properties of the semi-discrete Green function, the a priori estimation of the numerical solution gives an intermittent upper bound ($Cp^3\lambda^4$) for the upper $p$th moment Lyapunov exponent. The semi-discrete Green function, whose point-wise property is slightly different from the continuous one, is proved to be positive when time is large. This positivity, combining with a modified reverse Gr{\"o}nwall's inequality reveals an intermittent lower bound ($C\lambda^2$) of the upper $2$th moment Lyapunov exponent under natural conditions. These imply that the numerical solution of this semi-discretization is weakly intermittent. 
To enhance the exponential order of the second moment of the semi-discrete numerical solution, a renewal approach depending on the finer integral lower estimate of the semi-discrete Green function on the spatial grid points is applied. We prove that the second moment of the semi-discrete numerical solution on the spatial grid points has sharp exponential order $C\lambda^4t$ provided additionally that the initial data is a positive constant and the partition number is large.

For the full discretization, we apply further the $\theta$-scheme to get a fully discrete scheme, which is convergent to the exact solution in the mean square sense with order $1/2$ in the spatial direction and $1/4$ in the temporal direction. The compact integral form is formulated by presenting the explicit expressions of the fully discrete Green functions. The prerequisite for the proof of the weak intermittency is the technical estimates of the fully discrete Green functions. 
We prove that the numerical solution of this fully discrete scheme is weakly intermittent with an intermittent upper bound ($Cp^3\lambda^4$) for the upper $p$th moment Lyapunov exponent and an intermittent lower bound ($[\log(1+C\lambda^2\tau)]/\tau$ with $\tau$ being the time step size) for the upper $2$th moment Lyapunov exponent. This implies that the second moment of the numerical solution of the fully discrete scheme grows at most as $\exp\{C\lambda^4t_m\}$ and at least as $\exp\{C\lambda^2t_m\}$ for sufficiently large $t_m:=m\tau$. To fill the gap of the index of $\lambda$, a discrete renewal method is implemented, which essentially depends on the finer estimate of the fully discrete Green function. Under some coupling condition between the space and time step sizes, we prove that the second moment of this fully discrete scheme has sharp exponential order $C\lambda^4t_m$.

This paper is organized as follows. In Section \ref{sec2}, the weak intermittency of the mild solution of \eqref{she} is established. In Section \ref{sec3}, we apply the finite difference method to \eqref{she} for spatial discretization, and prove the weak intermittency and the sharp exponential order of the second moment of the numerical solution of this spatially semi-discrete scheme. The convergence order in the mean square sense of the spatially semi-discrete scheme is given.
Section \ref{sec4} is devoted to the analysis of the fully discrete scheme on the preservation of the weak intermittency and the sharp exponential order of the second moment of the exact solution. Moreover, we give the mean square convergence order of the fully discrete scheme.
In Section \ref{conclusion}, we give our conclusions and propose several open problems for future study. At last, some proofs are given in the appendix.

\section{Weak intermittency of exact solution}\label{sec2}
The goal of this section is to investigate the weak intermittency of the mild solution of \eqref{she}. 
The goal of this section is to investigate the weak intermittency of the mild solution of \eqref{she}. Before that, we first present the definitions of Lyapunov exponent and intermittency, which can be found in \cite{KD14}. Throughout this paper, we let $\textbf{i}^2=-1,$ and constant $C$ may be different from line to line.

\begin{dfn}\label{def2.1}
Fix some $x\in[0,1]$, define the upper $p$th moment Lyapunov exponent of $u$ at $x$ as 
 \begin{equation}\label{lyapunov}
 \bar{\gamma}_p(x):=\limsup_{t\rightarrow \infty}\frac{1}{t}\log \mathbb{E}\left(\left|u(t,x)\right|^p\right),
 \end{equation}
 for all $p\in (0,\infty)$.
 \end{dfn}

\begin{dfn}\label{def2.2}
 (\romannumeral1) We say that $u$ is fully intermittent if for all $x\in[0,1]$, the map 
 $
 p\rightarrow \frac{\bar{\gamma}_p(x)}{p}
 $
 is strictly increasing for $p\in[2,\infty)$.\\
 (\romannumeral2) We say that $u$ is weakly intermittent if for all $x\in[0,1]$, $\bar{\gamma}_2(x)>0$ and $\bar{\gamma}_p(x)<\infty$ for each $p>2$. 
 \end{dfn}
 \begin{remark}\label{remark2.3}
 (\romannumeral1) The full intermittency can be implied by the weak intermittency on some certain circumstances, for example, $\sigma(0)=0$ and $u_0(x)\ge 0$. For its proof, we refer to \cite[Theorem 3.1.2]{anderson94}.\\
 (\romannumeral2) All the results in this paper are still valid if we choose the lower $p$th moment Lyapunov exponent, whose definition is
 $$\underline{\gamma}_p(x):=\liminf_{t\rightarrow \infty}\frac{1}{t}\log \mathbb{E}(\left|u(t,x)\right|^p).$$
 \end{remark}
The mild solution of \eqref{she} can be written as
\begin{align}\label{mild periodic}
u(t,x)=\int_{0}^{1}G(t,x,y)u_0(y)\,dy+\lambda\int_{0}^{t}\int_{0}^{1} G(t-s,x,y)\sigma\left(u(s,y)\right)\,dW(s,y),
\end{align}
where the Green function $G(t,x,y)$ is defined as (see \cite{Green01})
 \begin{align}\label{greenp1}
G(t,x,y)=\frac{1}{\sqrt{4\pi t}}\sum _{m=-\infty}^{+\infty}e^{-\frac{(x-y-m)^2}{4t}},\quad t>0,\quad x,y\in[0,1],
\end{align}
and its spectral decomposition is
\begin{align}\label{greenp2}
G(t,x,y)=\sum_{j=-\infty}^{+\infty}e^{-4\pi^2j^2t}e^{2\pi \textbf{i}j(x-y)},\quad t>0,\quad x,y\in[0,1].
\end{align}

In order to investigate the weak intermittency of the exact solution of \eqref{she}, we make the following assumption on the initial data and diffusion coefficient.
\begin{assumption}\label{Assumption 2}
Let $I_0:=\inf_{x\in[0,1]}u_0(x)$, and $J_0:=\inf_{x\in \mathbb{R}\backslash\{0\}}\left|\frac{\sigma (x)}{x}\right|$. We assume that $I_0>0, J_0>0$.
\end{assumption}
 \begin{thm}\label{thm2.4}
 Under Assumption \ref{Assumption 2}, the solution of \eqref{she} is weakly intermittent.
 \end{thm}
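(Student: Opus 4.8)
The plan is to verify the two requirements of Definition~\ref{def2.2}(\romannumeral2) independently: finiteness of every moment Lyapunov exponent, $\bar{\gamma}_p(x)<\infty$ for all $p>2$, and strict positivity of the second one, $\bar{\gamma}_2(x)>0$. Both will be read off the mild formulation \eqref{mild periodic} together with two elementary consequences of the spectral decomposition \eqref{greenp2}: the conservativeness $\int_0^1 G(t,x,y)\,dy=1$ (only the $j=0$ mode survives integration in $y$), and the identity
\begin{equation*}
\Theta(s):=\int_0^1 G^2(s,x,y)\,dy=\sum_{j\in\mathbb{Z}}e^{-8\pi^2 j^2 s},
\end{equation*}
which is independent of $x$ and satisfies $\Theta(s)\ge 1$ for every $s>0$ and $\Theta(s)\le C(1\vee s^{-1/2})$; in particular $\int_0^\infty e^{-\beta s}\Theta(s)\,ds\le C\beta^{-1/2}$ for $\beta\ge 1$. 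Existence, uniqueness and finiteness of all moments of the mild solution are classical and will be taken for granted.

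For the upper bound, set $N_p(t):=\sup_{x\in[0,1]}\big(\mathbb{E}|u(t,x)|^p\big)^{2/p}$ for $p\ge 2$. Applying the Burkholder--Davis--Gundy inequality to the stochastic-integral term of \eqref{mild periodic} (whose bracket is $\lambda^2\int_0^t\!\int_0^1 G^2(t-s,x,y)|\sigma(u(s,y))|^2\,dy\,ds$), then Minkowski's integral inequality, the Lipschitz bound $|\sigma(z)|^2\le C(1+|z|^2)$, and $\int_0^1 G^2(t-s,x,y)\,dy=\Theta(t-s)$, while bounding the deterministic term by $\|u_0\|_\infty^2$ via conservativeness, one arrives at a renewal-type inequality
\begin{equation*}
N_p(t)\le C\|u_0\|_\infty^2+Cp\lambda^2\int_0^t\Theta(t-s)\big(1+N_p(s)\big)\,ds.
\end{equation*}
Choosing $\beta=\beta(p,\lambda)\ge 1$ so large that $Cp\lambda^2\int_0^\infty e^{-\beta s}\Theta(s)\,ds\le\tfrac12$, which by the above holds once $\beta\ge Cp^2\lambda^4$, and examining $\sup_{t\ge0}e^{-\beta t}N_p(t)$ in this inequality yields $N_p(t)\le Ce^{\beta t}$. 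Since $\mathbb{E}|u(t,x)|^p\le N_p(t)^{p/2}$, this gives $\bar{\gamma}_p(x)\le \tfrac{p}{2}\beta\le Cp^3\lambda^4<\infty$ for every $p>2$ (the extra factor $p$ relating $\bar{\gamma}_p$ to the growth rate of $N_p$ is exactly what produces the cubic dependence on $p$).

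For the lower bound, put $F(t):=\inf_{x\in[0,1]}\mathbb{E}|u(t,x)|^2$, finite by the previous step. Squaring \eqref{mild periodic} and applying Walsh's (It\^o) isometry gives, for each $x$,
\begin{equation*}
\mathbb{E}|u(t,x)|^2=\Big(\int_0^1 G(t,x,y)u_0(y)\,dy\Big)^2+\lambda^2\int_0^t\!\!\int_0^1 G^2(t-s,x,y)\,\mathbb{E}|\sigma(u(s,y))|^2\,dy\,ds.
\end{equation*}
Under Assumption~\ref{Assumption 2}, $u_0\ge I_0>0$ together with $G\ge0$ and conservativeness forces the first term to be $\ge I_0^2$, while $|\sigma(z)|\ge J_0|z|$ gives $\mathbb{E}|\sigma(u(s,y))|^2\ge J_0^2\,\mathbb{E}|u(s,y)|^2\ge J_0^2 F(s)$, and $\Theta(t-s)\ge 1$; hence
\begin{equation*}
F(t)\ge I_0^2+\lambda^2 J_0^2\int_0^t\Theta(t-s)F(s)\,ds\ge I_0^2+\lambda^2 J_0^2\int_0^t F(s)\,ds.
\end{equation*}
Iterating this scalar integral inequality (the ``reverse'' Gr\"onwall argument) gives $F(t)\ge I_0^2\,e^{\lambda^2 J_0^2 t}$, so $\bar{\gamma}_2(x)\ge\lambda^2 J_0^2>0$ for every $x\in[0,1]$. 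Together with the upper bound this establishes weak intermittency. The part I expect to be the genuine obstacle is the upper bound: the singularity $\Theta(s)\sim s^{-1/2}$ at $s=0$ must be handled carefully both in the a priori moment estimates (a Picard iteration in a weighted sup-norm in $x$ and $t$) and in the renewal inequality, with enough bookkeeping of constants to obtain the sharp exponents $p^3$ and $\lambda^4$ rather than a crude finite bound. The lower bound, by contrast, is immediate once conservativeness and $\Theta\ge 1$ are available and uses nothing beyond Assumption~\ref{Assumption 2}.
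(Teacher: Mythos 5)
Your argument is correct and establishes weak intermittency as defined in Definition \ref{def2.2}(\romannumeral2), but it diverges from the paper on the lower bound in a way worth noting. Your upper bound is essentially the paper's (which simply cites \cite{dissipation20}): a Picard/weighted-norm iteration with the Burkholder--Davis--Gundy inequality and the bound $\Theta(s)\le C(1\vee s^{-1/2})$, yielding $\bar{\gamma}_p(x)\le Cp^3\lambda^4$; your bookkeeping of the constants ($\beta\gtrsim p^2\lambda^4$ from $\int_0^\infty e^{-\beta s}\Theta(s)\,ds\le C\beta^{-1/2}$, then the extra factor $p/2$) is the right one. For the lower bound, however, you use only the crude estimate $\Theta(s)\ge 1$ (the $j=0$ mode) together with a reverse Gr\"onwall iteration, which gives $\bar{\gamma}_2(x)\ge \lambda^2J_0^2>0$. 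This is enough for the theorem as stated, and it is the more elementary route --- it is in fact the same argument the paper uses later for the semi-discrete scheme in Proposition \ref{thm3.5}, where positivity of the discrete Green function is only available for large $t$. The paper's proof of Theorem \ref{thm2.4} instead follows the renewal-theoretic approach of Khoshnevisan et al., exploiting the quantitative small-time singularity $\Theta(s)=G(2s,x,x)\ge (8\pi s)^{-1/2}$ from Lemma \ref{lemma2.5}(\romannumeral2) to obtain the stronger bound $\bar{\gamma}_2(x)\ge \lambda^4J_0^4/8$. What that buys is the matching $\lambda^4$-dependence with the upper bound, i.e.\ the ``sharp exponential order $C\lambda^4 t$'' of the second moment announced in the introduction and mirrored for the numerical schemes in Theorems \ref{thm5.1} and \ref{thm4.9}; your version proves weak intermittency but leaves a gap between $\lambda^2$ below and $\lambda^4$ above. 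If you only want Theorem \ref{thm2.4}, your proof stands; if you want what Section \ref{sec2} actually claims about the noise level, you need the renewal step.
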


Before giving the proof of Theorem \ref{thm2.4}, we show some properties of the Green function to \eqref{she}.
\begin{lemma}\label{lemma2.5}
$G(t,x,y)$ has the following properties:\vspace{1 ex}\\
(\romannumeral1) $G(t,x,y)\ge 0$ for  $t> 0,x,y\in[0,1]$, and $\int_{0}^{1}G(t,x,y)\,dy=1$ for $t> 0,x\in[0,1]$.\vspace{1 ex}\\
(\romannumeral2) $\int_{0}^{1}G^2(t,x,y)\,dy=G(2t,x,x)\ge \frac{1}{\sqrt{8\pi t}}$ for $t>0,x\in[0,1]$.\vspace{1 ex}\\
(\romannumeral3) $\int_{0}^{1}G^2(t,x,y)\,dy\leq C\left(\frac{1}{\sqrt{t}}+1\right)$ with a positive constant $C$ for all $t>0,x\in[0,1]$.
\end{lemma}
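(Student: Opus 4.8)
The plan is to work directly from the two representations of $G$ already recorded in \eqref{greenp1} and \eqref{greenp2}: the Gaussian (heat-kernel) series is best suited for pointwise positivity and for lower bounds, while the spectral series diagonalizes all integrals in $y$ over $[0,1]$ through the orthogonality relation that $\int_0^1 e^{2\pi\textbf{i}j(x-y)}\,dy$ equals $1$ if $j=0$ and $0$ otherwise. Throughout, the only analytic subtlety is the interchange of summation and integration, which is legitimate by Tonelli's theorem for the non-negative Gaussian series, and by absolute and locally uniform convergence (for fixed $t>0$) for the spectral series.

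For part (\romannumeral1), non-negativity is immediate from \eqref{greenp1}, since for $t>0$ the series is a convergent sum of the non-negative terms $(4\pi t)^{-1/2}e^{-(x-y-m)^2/(4t)}$. For the total mass, I would integrate \eqref{greenp1} in $y$ over $[0,1]$, move the sum outside the integral, and use that the translates $[0,1]-m$, $m\in\mathbb{Z}$, tile $\mathbb{R}$, so that $\sum_{m}\int_0^1 e^{-(x-y-m)^2/(4t)}\,dy=\int_{\mathbb{R}}e^{-(x-z)^2/(4t)}\,dz=\sqrt{4\pi t}$; dividing by $\sqrt{4\pi t}$ gives $1$. Equivalently, integrating \eqref{greenp2} termwise annihilates every mode except $j=0$, which contributes $e^{0}=1$.

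For part (\romannumeral2), I would compute $\int_0^1 G^2(t,x,y)\,dy$ from \eqref{greenp2}: expanding the product of the two series and integrating in $y$, the orthogonality relation forces the two indices to be opposite, collapsing the double sum to $\sum_{j\in\mathbb{Z}}e^{-8\pi^2 j^2 t}$; comparing this with \eqref{greenp2} evaluated at time $2t$ and $y=x$ identifies it with $G(2t,x,x)$, which is precisely the Chapman--Kolmogorov identity for the periodic heat semigroup. The lower bound then follows from \eqref{greenp1}: $G(2t,x,x)=(8\pi t)^{-1/2}\sum_{m\in\mathbb{Z}}e^{-m^2/(8t)}\ge (8\pi t)^{-1/2}$, retaining only the $m=0$ term and discarding the remaining non-negative ones.

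For part (\romannumeral3), I would start from the identity $\int_0^1 G^2(t,x,y)\,dy=G(2t,x,x)=(8\pi t)^{-1/2}\sum_{m\in\mathbb{Z}}e^{-m^2/(8t)}$ established in (\romannumeral2) and estimate the theta-type series. Since $r\mapsto e^{-r^2/(8t)}$ is decreasing on $[0,\infty)$, a sum-integral comparison yields $\sum_{m\ge 1}e^{-m^2/(8t)}\le \int_0^\infty e^{-r^2/(8t)}\,dr=\sqrt{2\pi t}$, hence $\sum_{m\in\mathbb{Z}}e^{-m^2/(8t)}\le 1+2\sqrt{2\pi t}$ and $G(2t,x,x)\le (8\pi t)^{-1/2}+1\le C(t^{-1/2}+1)$ with an explicit $C$. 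There is no genuine obstacle here; the one point to get right is to bound the sum by a Gaussian integral rather than by a geometric series, since the geometric bound only gives the cruder rate $t^{-1}$ near $t=0$ instead of the stated $t^{-1/2}$.
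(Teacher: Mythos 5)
Your proof is correct. Parts (\romannumeral1) and (\romannumeral2) follow exactly the paper's route: positivity and the lower bound come from the Gaussian series \eqref{greenp1}, while the unit mass and the identity $\int_0^1 G^2(t,x,y)\,dy=G(2t,x,x)$ come from orthogonality of the modes in the spectral series \eqref{greenp2}. The paper simply declares (\romannumeral1) ``obvious''; your tiling argument $\sum_{m}\int_0^1 e^{-(x-y-m)^2/(4t)}\,dy=\int_{\mathbb{R}}e^{-(x-z)^2/(4t)}\,dz=\sqrt{4\pi t}$ is the right way to fill that in. The only genuine divergence is in (\romannumeral3): the paper outsources the upper bound to an external result (Lemma B.1 of the cited reference \cite{dissipation20}), whereas you estimate the theta series directly via the sum--integral comparison $\sum_{m\ge 1}e^{-m^2/(8t)}\le\int_0^\infty e^{-r^2/(8t)}\,dr=\sqrt{2\pi t}$, which yields the explicit bound $G(2t,x,x)\le (8\pi t)^{-1/2}+1$. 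Your version is self-contained and produces an explicit constant, and your observation that a geometric-series bound would only give the cruder rate $t^{-1}$ near $t=0$ correctly identifies the one place where a careless estimate would lose the stated $t^{-1/2}$ rate.
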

\begin{proof}
It is obvious that $(\romannumeral1)$ holds. We prove $(\romannumeral2)$ by the use of the spectral decomposition \eqref{greenp2} of the Green function.
\begin{equation*}
\begin{aligned}
\int_{0}^{1}G^2(t,x,y)\,dy&=\int_{0}^{1}\sum_{r,j=-\infty}^{+\infty}e^{-4\pi^2 (r^2+j^2)t}e^{2\pi \textbf{i}(r+j)(x-y)}\,dy=\sum_{\left\{r,j\in\mathbb{Z};\; r+j=0\right\}}e^{-4\pi^2(r^2+j^2)t}\\
&=\sum_{r=-\infty}^{+\infty}e^{-8\pi^2 r^2t}=G(2t,x,x).
\end{aligned}
\end{equation*}
By \eqref{greenp1}, we have
\begin{align*}
G(2t,x,x)=\frac{1}{\sqrt{8\pi t}}\sum_{m=-\infty}^{+\infty}e^{-\frac{m^2}{8t}}\ge \frac{1}{\sqrt{8\pi t}}.
\end{align*}
As for $(\romannumeral3)$, combining $(\romannumeral2)$ and \cite[Lemma B.1]{dissipation20}, we can get the desired result.
The proof is finished.
\end{proof}
\noindent
\textbf{Proof of Theorem \ref{thm2.4}:}
\begin{proof}
 The following intermittent upper bound is a direct consequence of \cite[Proposition 4.1]{dissipation20}: 
\begin{align*}
\sup_{x\in[0,1]}\bar{\gamma}_p(x)\leq CL_{\sigma}^4\lambda^4p^3,
\end{align*}
with some constant $C>0$ for all $p\in[2,\infty),$ where $L_{\sigma}$ is the Lipschitz constant of $\sigma$.

Following the approach presented by Khoshnevisan et al. in \cite[Section 2.2]{KDK15}, and combining Lemma \ref{lemma2.5} $(\romannumeral1)$ $(\romannumeral2)$, the intermittent lower bound is given below.
 Under Assumption \ref{Assumption 2}, $\inf_{x\in[0,1]}\bar{\gamma}_2(x)\ge \frac{\lambda^4J^4_0}{8}>0$. Hence, the proof is finished.
 \end{proof}
 At the end of this subsection, let's intuitively see the information that the weak intermittency can bring to us.
 Suppose $\bar{\gamma}_2(x)=\underline{\gamma}_2(x):=\gamma_2(x).$
 Take constants $\alpha_1,\alpha_2$ satisfying $$0<\alpha_1\lambda^4<C_1\lambda^4\leq\frac{\gamma_2(x)}{2}\leq C_2\lambda^4<\alpha_2\lambda^4.$$ Set $B_1(t):=\left\{\omega\in\Omega:\left|u(t,x)(\omega)\right|>e^{\alpha_2\lambda^4t}\right\}$ and $B_2(t):=\left\{\omega\in\Omega:\left|u(t,x)(\omega)\right|<e^{\alpha_1\lambda^4t}\right\}$.
 
 By Chebyshev's inequality, 
 $$
 \mathbb{P}(B_1(t))\leq e^{-2\alpha_2\lambda^4t}\mathbb{E}\left|u(t,x)\right|^2\approx e^{-(2\alpha_2\lambda^4-\gamma_2(x))t}\leq e^{-Ct}$$ with some $C>0,$ where $f(t)\approx g(t)$ means $\lim_{t\rightarrow\infty}(\log f(t)-\log g(t))/t=0.$ This implies that the random field $u$ may take very large values with exponentially small probabilities, and therefore it develops high peaks when $t$ is large.
 
  Moreover, $$\mathbb{E}\left(|u(t,x)|^2;B_2(t)\right)\leq e^{2\alpha_1\lambda^4t}\ll e^{\gamma_2(x)t}\approx \mathbb{E}|u(t,x)|^2,$$ where $f(t)\ll g(t)$ denotes $\lim_{t\rightarrow\infty}f(t)/g(t)=0.$ This means the contribution to the second moment of $u$ at $x$ comes from $(B_2(t))^c$ where may appear the high peak for large $t$.
  
   When the random field $u$ is fully intermittent, the main contribution to each moment of $u$ is carried by higher and higher, more and more widely spaced peaks. The above analysis is also valid for numerical solution.
For more details, we refer to \cite{feynman95, dissipation20}.

\section{Intrinsic property-preserving spatial semi-discretization}\label{sec3}
In this section, we apply the finite difference method to \eqref{she} to get a spatially semi-discrete scheme, whose continuous version solution can be written into a compact integral form by the use of explicit expression of the semi-discrete Green function. The spatially semi-discrete scheme is convergent to the exact solution in the mean square sense with order $\frac{1}{2}$. Based on the detailed analysis on the semi-discrete Green function and reverse Gr{\"o}nwall's inequality, we prove that the numerical solution of this semi-discretization is weakly intermittent. Moreover, this semi-discrete scheme preserves the sharp exponential order of the second moment of the exact solution.

\subsection{Spatially semi-discrete scheme}\label{sec3.1}
We introduce the uniform partition on the spatial domain $[0,1]$ with step size $\frac{1}{n}$ for a fixed integer $n\ge 3$.
Let $u^n(t,\frac{k}{n})$ be the approximation of $u(t,\frac{k}{n})$, $k=0,1,\ldots,n-1$.
The spatially semi-discrete scheme based on the finite difference method is given by:
\begin{equation}\label{fdm}
\begin{aligned}
\begin{cases}
du^n\left(t,\frac{k}{n}\right)=n^2\left(u^n\left(t,\frac{k+1}{n}\right)-2u^n\left(t,\frac{k}{n}\right)+u^n\left(t,\frac{k-1}{n}\right)\right)dt+\lambda \sqrt{n}\sigma\left(u^n\left(t,\frac{k}{n}\right)\right)dW^n_k(t),\vspace{1ex}\\
u^n(t,0)=u^n(t,1),\quad u^n\left(t,-\frac{1}{n}\right)=u^n\left(t,\frac{n-1}{n}\right),\quad t\ge 0,\vspace{1ex}\\
u^n\left(0,\frac{k}{n}\right)=u_0\left(\frac{k}{n}\right),\quad k=0,1,\ldots, n-1,
\end{cases}
\end{aligned}
\end{equation}
where $W^n_k(t):=\sqrt{n}\big(W\big(t,\frac{k+1}{n}\big)-W\big(t,\frac{k}{n}\big)\big).$
By the linear interpolation with respect to the space variable, it follows from Appendix \ref{A.2} that the mild form of $u^n$ is given by:
\begin{align}\label{mild fdm}
u^n(t,x)=\int_{0}^{1}G^n(t,x,y)u^n(0,\left(\kappa_n(y)\right)\,dy+\lambda\int_{0}^{t}\int_{0}^{1}G^n(t-s,x,y)\sigma\left(u^n(s,\kappa_n(y))\right)dW(s,y),
\end{align}
almost surely for all $t\ge 0$ and $x\in [0,1]$, where
$G^n(t,x,y):=\sum_{j=0}^{n-1}e^{\lambda^n_j t}e^n_j(x)\bar{e}_j(\kappa_n(y))$
 with $\lambda^n_j:=-4n^2\sin^2\left(\frac{j\pi}{n}\right)$, $\kappa_n(y):=\frac{[ny]}{n}$, $[\cdot]$ being the greatest integer function, $e_j(x)=e^{2\pi \mathbf{i}jx}$, $\bar{e}_j(\cdot)$ representing the complex conjugate of $e_j(\cdot)$ and
\begin{align*}
e^n_j(x):=e_j\left(\kappa_n(x)\right)+(nx-n\kappa_n(x))\Big[e_j\Big(\kappa_n(x)+\frac{1}{n}\Big)-e_j(\kappa_n(x))\Big],\quad \forall x\in[0,1].
\end{align*}

Nevertheless, based on the periodicity of $\lambda^n_j$ and $e_j$ with respect to $j$, $G^n(t,x,y)$ can be rewritten into two cases:\\
\begin{align*}
G^n(t,x,y)=\left\{\begin{array}{ll}
\sum_{j=-\left[\frac{n}{2}\right]}^{\left[\frac{n}{2}\right]}e^{\lambda^n_jt}e^n_j(x)\bar{e}_j(\kappa_n(y)),\quad &n\text{ is odd},\\
\sum_{j=-\frac{n}{2}+1}^{\frac{n}{2}}e^{\lambda^n_jt}e^n_j(x)\bar{e}_j(\kappa_n(y)),\quad &n \text{ is even}.
\end{array}
\right.
\end{align*}
By expanding the real and imaginary parts of $G^n$, it is not difficult to observe that $G^n$ is a real function (see Appendix \ref{aplemma3.3}). 
Now we give the main result of this subsection.
\begin{thm}\label{thm3.1}
Under Assumption \ref{Assumption 2}, the solution of the spatially semi-discrete scheme is weakly intermittent. 
\end{thm}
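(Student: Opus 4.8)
The plan is to mirror the structure of the proof of Theorem~\ref{thm2.4} at the discrete level, replacing the continuous Green function $G(t,x,y)$ by its semi-discrete analogue $G^n(t,x,y)$ and establishing the two-sided control of the relevant moment Lyapunov exponents. Thus I would prove (a) an intermittent upper bound $\sup_{x\in[0,1]}\bar\gamma_p(x)\le C L_\sigma^4\lambda^4 p^3$ for every $p\ge 2$, and (b) an intermittent lower bound $\inf_{x\in[0,1]}\bar\gamma_2(x)\ge c\lambda^2>0$ under Assumption~\ref{Assumption 2}, and conclude via Definition~\ref{def2.2}(ii) that the numerical solution is weakly intermittent.

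The first ingredient is a discrete counterpart of Lemma~\ref{lemma2.5} for $G^n$. From the mild form \eqref{mild fdm}, the second moment of $u^n(t,x)$ via the It\^o isometry is governed by $\int_0^1 |G^n(t,x,y)|^2\,dy$ and by $\int_0^1 G^n(t,x,y)u^n(0,\kappa_n(y))\,dy$. Using the explicit spectral representation $G^n(t,x,y)=\sum_j e^{\lambda^n_j t}e^n_j(x)\bar e_j(\kappa_n(y))$ together with the discrete orthogonality of $\{e_j(\kappa_n(\cdot))\}$ on the grid, I expect to obtain (i) $\int_0^1 G^n(t,x,y)\,dy = 1$ (conservativeness, using $\lambda^n_0=0$), (ii) a lower bound $\int_0^1 |G^n(t,x,y)|^2\,dy \ge c/\sqrt{t}$ (or at least a uniform positive lower bound for $t$ bounded away from $0$ and a decay like $t^{-1/2}$), and (iii) an upper bound $\int_0^1 |G^n(t,x,y)|^2\,dy \le C(t^{-1/2}+1)$ uniformly in $n$. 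The point-wise sign of $G^n$ is more delicate than in the continuous case — the paper has already flagged that $G^n$ need not be nonnegative for all $t$ but is strictly positive for large $t$; I would extract exactly this positivity statement (for $t$ large, uniformly in $x,y$) since it is what the lower-bound argument needs.

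For the upper bound I would run a Picard/Gronwall a priori estimate on $\mathbb{E}|u^n(t,x)|^{2p}$ using \eqref{mild fdm}: apply Minkowski and the Burkholder--Davis--Gundy inequality in the stochastic integral, use the Lipschitz bound $|\sigma(u)|\le |\sigma(0)|+L_\sigma|u|$, and reduce to a renewal/Gronwall inequality of the form $H(t)\le a + b\lambda^2\int_0^t (t-s)^{-1/2}H(s)\,ds$ (with $b\propto p$ from BDG), whose solution grows like $\exp\{C p^3\lambda^4 t\}$ after the standard fractional-integral iteration; this gives $\bar\gamma_p(x)\le C L_\sigma^4\lambda^4 p^3$. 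This is essentially the semi-discrete version of \cite[Proposition~4.1]{dissipation20} and should go through with $n$-uniform constants because of bound (iii). For the lower bound I would, following \cite[Section 2.2]{KDK15}, set $F(t):=\inf_{x}\mathbb{E}|u^n(t,x)|^2$; using $\sigma(u)^2\ge J_0^2 u^2$, conservativeness (i) with $u^n(0,\cdot)\ge I_0>0$ to get $\int_0^1 G^n(t,x,y)u^n(0,\kappa_n(y))\,dy\ge I_0$, and the square lower bound (ii), I obtain $F(t)\ge I_0^2 + c\lambda^2 J_0^2\int_0^t (t-s)^{-1/2}F(s)\,ds$ (valid for $t$ large, where positivity of $G^n$ legitimizes the sign manipulations), and a reverse-Gronwall (Laplace transform / renewal) argument yields $F(t)\ge c' e^{c\lambda^2 t}$, hence $\bar\gamma_2(x)\ge C\lambda^2>0$.

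The main obstacle I anticipate is handling the lack of global nonnegativity of $G^n$: the clean lower-bound machinery of \cite{KDK15} relies on $G\ge 0$, so I must either restrict to the time regime where $G^n>0$ (which suffices for a $\limsup$ statement) or carry error terms from the possibly-negative short-time part of $G^n$ and show they do not destroy the exponential lower bound — this requires the quantitative positivity/decay estimates of $G^n$ from the appendix. A secondary technical point is ensuring all constants in both the BDG/Gronwall upper estimate and the renewal lower estimate are independent of the partition number $n\ge 3$, which again comes down to the $n$-uniform integral bounds (ii)--(iii) for $G^n$; once these are in place, the two bounds together give exactly weak intermittency as claimed.
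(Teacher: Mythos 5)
Your proposal matches the paper's proof in both structure and substance: the upper bound is obtained exactly as you describe (Picard iteration, Minkowski and Burkholder--Davis--Gundy, then a weighted-norm/fractional Gr{\"o}nwall estimate giving $\bar\gamma^n_p(x)\le CL_\sigma^4\lambda^4p^3$, with $n$-uniformity coming from $\int_0^1(G^n)^2\,dy\le 1+\sqrt{\pi/(8t)}$), and the lower bound uses conservativeness $\int_0^1G^n\,dy=1$, the eventual positivity $G^n\ge\tfrac12$ for $t>t(n)$ to justify the drift estimate $\int_0^1 G^n u_0\,dy\ge I_0$, and a reverse Gr{\"o}nwall inequality to get $\bar\gamma^n_2(x)\ge \lambda^2J_0^2>0$. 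The only small deviation is that for the lower bound the paper uses the simple $n$-uniform estimate $\int_0^1(G^n(t,x,y))^2\,dy\ge 1$ (the $j=0$ mode) rather than a $t^{-1/2}$-type lower bound — the latter is only established at the grid points $\kappa_n(x)$ and is reserved for the sharp $\lambda^4$ result — so your stated fallback (``a uniform positive lower bound'') is precisely the route taken.
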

The proof of Theorem \ref{thm3.1} follows from Sections \ref{sec3.2} and \ref{sec3.3}. Before that, we prove the following properties of the semi-discrete Green function $G^n$, which is essential in establishing the weak intermittency of \eqref{fdm}.
\begin{lemma}\label{lemma 2.1}
$G^n(t,x,y)$ has the following properties:\vspace{1 ex}\\
(\romannumeral1) $\int_{0}^{1}G^n(t,x,y)\,dy=1$ for $t>0,\;x\in[0,1]$.\vspace{1 ex}\\
(\romannumeral2) 
For $t>0,\;x\in[0,1],$ the following equalities hold:
\begin{align*}
\int_{0}^{1}\left(G^n(t,x,y)\right)^2\,dy=\sum_{j=0}^{n-1}e^{2\lambda^n_jt}\left|e^n_j(x)\right|^2=
\left\{\begin{array}{ll}
\sum_{j=-\left[\frac{n}{2}\right]}^{\left[\frac{n}{2}\right]}e^{2\lambda^n_jt}\left|e^n_j(x)\right|^2,&n\text{ is odd},\vspace{1 ex}\\
\sum_{j=-\frac{n}{2}+1}^{\frac{n}{2}}e^{2\lambda^n_jt}\left|e^n_j(x)\right|^2,&n\text{ is even}.
\end{array}
\right.
\end{align*}
Moreover, $\int_{0}^{1}\left(G^n(t,x,y)\right)^2\,dy\ge 1$ for $t>0,\;x\in[0,1]$.\vspace{1 ex}\\
(\romannumeral3) $\int_{0}^{1}\left(G^n(t,x,y)\right)^2\,dy\leq 1+\sqrt{\frac{\pi}{8t}}$ for all $t>0,x\in[0,1]$.\vspace{1 ex}\\
(\romannumeral4) For each fixed $n\ge 3,$ there exists a number $t(n)>0$ depending on $n$, such that $G^n(t,x,y)\ge \frac{1}{2}>0$ for all $t>t(n),\;x,y\in[0,1]$.
\end{lemma}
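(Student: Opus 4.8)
The plan is to reduce all four parts to the orthogonality of the discrete characters on $\mathbb{Z}/n\mathbb{Z}$ together with elementary estimates on $\lambda^n_j$ and $e^n_j$. The single computation underlying everything is $\int_{0}^{1}e_\ell(\kappa_n(y))\bar{e}_j(\kappa_n(y))\,dy=\frac{1}{n}\sum_{k=0}^{n-1}e^{2\pi\mathbf{i}(\ell-j)k/n}$, which equals $1$ if $j\equiv\ell\pmod n$ and $0$ otherwise; in particular $\int_0^1\bar e_j(\kappa_n(y))\,dy$ equals $1$ when $n\mid j$ and vanishes otherwise. Combined with $\lambda^n_0=0$ and $e^n_0\equiv 1$ (because $e_0\equiv 1$), part (i) follows by inserting the definition of $G^n$ and integrating termwise: only the index $j=0$ survives, contributing $e^{\lambda^n_0 t}\,e^n_0(x)\int_0^1\bar e_0(\kappa_n(y))\,dy=1$. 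For (ii) I would use that $G^n$ is real (Appendix \ref{aplemma3.3}), write $(G^n)^2=G^n\overline{G^n}$, expand the resulting double sum over $j,\ell$, and collapse it to the diagonal $j=\ell$ by the orthogonality relation, obtaining $\sum_{j=0}^{n-1}e^{2\lambda^n_j t}|e^n_j(x)|^2$; the two symmetric reindexings are nothing but the $n$-periodicity of $j\mapsto\lambda^n_j$ and $j\mapsto e^n_j$, both of which see $e_j$ only through its grid values. Keeping the $j=0$ term, which equals $1$, and discarding the remaining nonnegative terms gives the lower bound $\ge 1$.

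For (iii) I would start from the identity just proved and feed in two estimates. First, $|e^n_j(x)|\le 1$: by construction $e^n_j(x)=(1-\theta)\,e_j(\kappa_n(x))+\theta\,e_j(\kappa_n(x)+\frac{1}{n})$ with $\theta=nx-n\kappa_n(x)\in[0,1)$ is a convex combination of two numbers of modulus one, hence lies in the closed unit disc. Second, for every index occurring in the symmetric representation of $G^n$ one has $|j\pi/n|\le\pi/2$, so Jordan's inequality $\sin\phi\ge\frac{2}{\pi}\phi$ on $[0,\pi/2]$ yields $\sin^2(j\pi/n)\ge 4j^2/n^2$, hence $\lambda^n_j\le-16j^2$. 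Therefore $\int_0^1(G^n(t,x,y))^2\,dy\le\sum_{|j|\le n/2}e^{-32j^2 t}\le\sum_{j\in\mathbb{Z}}e^{-32j^2 t}\le 1+2\int_0^\infty e^{-32r^2 t}\,dr=1+\sqrt{\pi/(32t)}\le 1+\sqrt{\pi/(8t)}$, the second-to-last inequality being the usual comparison of a monotone series with its integral.

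For (iv) I would again split off the $j=0$ term, which is identically $1$ in $x,y$, and control the remainder: with $\mu_n:=\max_{1\le j\le n-1}\lambda^n_j=\lambda^n_1=-4n^2\sin^2(\pi/n)<0$, one has $|G^n(t,x,y)-1|\le\sum_{j\ne 0}e^{\lambda^n_j t}|e^n_j(x)|\le(n-1)e^{\mu_n t}$ uniformly in $x,y\in[0,1]$. Since $G^n$ is real, this gives $G^n(t,x,y)\ge 1-(n-1)e^{\mu_n t}$, so the choice $t(n):=|\mu_n|^{-1}\log(2(n-1))$ forces $G^n(t,x,y)\ge\frac12$ for every $t>t(n)$.

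None of these steps is deep; the only places needing care are fixing once and for all one complete residue system modulo $n$, so that the orthogonality relation and the reindexing in (ii) stay consistent, and, in (iii), invoking Jordan's inequality only on the range where $|j\pi/n|\le\pi/2$, which is exactly the symmetric range and explains why that form of $G^n$ is convenient. The positivity in (iv) is quantitatively crude — the threshold $t(n)$ grows like $n^2\log n$ — but it is precisely the qualitative statement needed in Sections \ref{sec3.2} and \ref{sec3.3}; the finer grid-point lower estimates of $G^n$ enter only later, through the renewal argument.
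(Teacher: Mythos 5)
Your proposal is correct and follows essentially the same route as the paper: discrete orthogonality of the characters for (i) and (ii), the Jordan-type bound $\sin^2(j\pi/n)\ge 4j^2/n^2$ (the paper's $c^n_j\ge 4/\pi^2$) plus an integral comparison for (iii), and domination of the $j\neq 0$ terms by $(n-1)e^{\lambda^n_1 t}$ for (iv). The only differences are cosmetic — you bound $|e^n_j(x)|\le 1$ directly via the convex-combination form rather than first working on grid points and then interpolating, and you make the threshold $t(n)$ explicit.
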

\begin{proof}
$(\romannumeral1)$ For all $t\ge 0,x\in [0,1]$, we get
\begin{equation*}
\begin{aligned}
\int_{0}^{1}G^n(t,x,y)\,dy
&=\sum_{k=0}^{n-1}\frac{1}{n}\sum_{j=0}^{n-1}e^{\lambda^n_jt}e^n_j(x)e^{-2\pi\textbf{i}j\frac{k}{n}}\\
&=1+\sum_{j=1}^{n-1}\frac{1}{n}e^{\lambda^n_jt}e^n_j(x)\sum_{k=0}^{n-1}\cos\left(2\pi j\frac{k}{n}\right)=1,
\end{aligned}
\end{equation*}
where we have used the fact that $\sum_{k=0}^{n-1}\cos\left(2\pi j\frac{k}{n}\right)=0$ for $j\notin n\mathbb{Z}.$

$(\romannumeral2)$ For all $t\ge 0,x\in [0,1]$, taking advantage of the orthogonality of $\left\{e_j\right\}_{j=0,1,\ldots,n-1},$ we get
\begin{equation*}
\begin{aligned}
&\int_{0}^{1}\left(G^n(t,x,y)\right)^2\,dy\\
&=\sum_{k=0}^{n-1}\frac{1}{n}\sum_{j=0}^{n-1}e^{2\lambda^n_jt}\left|e^n_j(x)\right|^2\left|\bar{e}_j\left(\frac{k}{n}\right)\right|^2+\frac{1}{n}\sum_{j\neq l}e^{(\lambda^n_j+\lambda^n_l)t}e^n_j(x)\bar{e}^n_l(x)\sum_{k=0}^{n-1}\bar{e}_j\left(\frac{k}{n}\right)e_l\left(\frac{k}{n}\right)\\
&=\sum_{j=0}^{n-1}e^{2\lambda^n_jt}\left|e^n_j(x)\right|^2.
\end{aligned}
\end{equation*}
Similarly, we can get the result in the case of $n$ being odd and even.

$(\romannumeral3)$ By $(\romannumeral2)$, we have
\begin{align*}
\int_{0}^{1}\left(G^n(t,x,y)\right)^2\,dy\leq&\; 1+4\sum_{j=1}^{\left[\frac{n}{2}\right]}e^{2\lambda^n_jt}=1+4\sum_{j=1}^{\left[\frac{n}{2}\right]}e^{-8j^2\pi^2c^n_jt}
\leq 1+4\sum_{j=1}^{\left[\frac{n}{2}\right]}e^{-32j^2t}\\
\leq&\; 1+4\int_0^{\left[\frac{n}{2}\right]}e^{-32z^2t}\,dz\leq 1+4\int_0^{\infty}e^{-32z^2t}\,dz
\leq 1+\sqrt{\frac{\pi}{8t}},
\end{align*}
where we have used the fact that $c^n_j:=\sin^2(\frac{j\pi}{n})/(\frac{j\pi}{n})^2\in \left[\frac{4}{\pi^2},1\right]$ for $j=1,2,\ldots,\left[\frac{n}{2}\right]$. 

$(\romannumeral4)$ 
Since 
\begin{align}\label{semi2}
4n^2\sin ^2\left(\frac{j\pi}{n}\right)t\ge 4n^2\sin ^2\left(\frac{\pi}{n}\right)t,\quad j=1,2,\ldots,n-1, 
\end{align}
 and the right hand side of \eqref{semi2} converges to infinity as $t\rightarrow \infty$, we get
 \begin{align*}
 \left|\sum_{j=1}^{n-1}e^{\lambda^n_jt}e^{2\pi \textbf{i}j\frac{q-l}{n}}\right|\leq \sum_{j=1}^{n-1}e^{\lambda^n_1t}\rightarrow 0 \quad \text{as}\quad t\rightarrow \infty
 \end{align*}
 uniformly for all $q,l=0,1,\ldots,n-1.$ Hence, for each fixed $n,$ there is a positive constant $t(n)$ such that when $t>t(n),$
$
 -\frac{1}{2}\leq \sum_{j=1}^{n-1}e^{\lambda^n_jt}e^{2\pi\textbf{i} j\frac{q-l}{n}}\leq \frac{1}{2}
$
holds for all $q,l=0,1,\ldots,n-1.$ Therefore, when $t>t(n)$ and for all $q,l=0,1,\ldots,n-1,$ we have
\begin{align*}
G^n\left(t,\frac{q}{n},\frac{l}{n}\right)=1+\sum_{j=1}^{n-1}e^{\lambda^n_jt}e^{2\pi\textbf{i} j\frac{q-l}{n}}\ge \frac{1}{2}.
\end{align*}
This will lead to our desired result after linear interpolation with respect to the space variable.\\
Hence the proof is completed.
\end{proof}

\subsection{Intermittent upper bound}\label{sec3.2}
To give the a priori estimation of the mild solution to \eqref{mild fdm}, we
introduce norms on the space of random fields,
$$
\mathcal{N}_{\beta,p}(u):=\sup_{t\ge 0}\sup_{x\in[0,1]}\left\{e^{-\beta t}\left\|u(t,x)\right\|_p\right\},\quad \forall \beta>0,\;p\ge 2,
$$
where $\|\cdot\|_p$ denotes the $L^p(\Omega)$-norm. Let $\mathcal{L}^{\beta, p}$ be the completion of simple random fields in $\mathcal{N}_{\beta,p}$-norm. For more details, we refer to \cite[Chapter 4]{KD14}.
\begin{prop}\label{proposition 2.2}
There exists a random field $u^n\in \bigcup_{\beta >0}\mathcal{L}^{\beta,p}$ solving \eqref{mild fdm} for each $n\ge 3,\; p\ge 2$. Moreover, $u^n$ is a.s.-unique among all random fields satisfying
\begin{align*}
\sup_{x\in[0,1]}\mathbb{E}\left(\left|u^n(t,x)\right|^p\right)\leq C_1^p\exp\left\{C_2L_{\sigma}^4\lambda^4p^3t\right\},\quad \text{for }p\ge 2,\; t\ge 0,
\end{align*}
with some constants $C_1:=C_1\big(\sup_{x\in[0,1]}u_0(x),n\big)>0$ and $C_2>0.$
\end{prop}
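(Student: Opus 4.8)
The plan is to construct $u^n$ by Picard iteration in the Banach space $\big(\mathcal{L}^{\beta,p},\mathcal{N}_{\beta,p}\big)$, in the spirit of the fixed-point arguments of \cite[Chapter 4]{KD14} but with the continuous Green function replaced everywhere by the semi-discrete one $G^n$, and then to upgrade the crude exponential bound coming out of that construction to the sharp one by a weakly singular Gronwall inequality. Introduce
\[
(\mathcal{T}v)(t,x):=\int_0^1 G^n(t,x,y)\,u^n(0,\kappa_n(y))\,dy+\lambda\int_0^t\!\!\int_0^1 G^n(t-s,x,y)\,\sigma\big(v(s,\kappa_n(y))\big)\,dW(s,y),
\]
so that a solution of \eqref{mild fdm} is exactly a fixed point of $\mathcal{T}$. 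The deterministic first term is harmless: from $|e^n_j(x)|\le 3$, $|e^{\lambda^n_j t}|\le 1$ and the fact that there are $n$ summands one gets the crude pointwise bound $|G^n(t,x,y)|\le 3n$, hence the first term is bounded by $3n\sup_{x\in[0,1]}u_0(x)$ uniformly in $(t,x)$ and lies in every $\mathcal{L}^{\beta,p}$ — this is where the dependence of $C_1$ on $n$ originates.

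For the stochastic term I would combine the Burkholder--Davis--Gundy inequality for Walsh-type stochastic integrals, Minkowski's integral inequality and the global Lipschitz property of $\sigma$ to get, for simple random fields $v,w$,
\begin{align*}
\big\|(\mathcal{T}v)(t,x)-(\mathcal{T}w)(t,x)\big\|_p^2
&\le 4p\lambda^2 L_\sigma^2\int_0^t\!\!\int_0^1\big(G^n(t-s,x,y)\big)^2\\
&\qquad\times\big\|v(s,\kappa_n(y))-w(s,\kappa_n(y))\big\|_p^2\,dy\,ds.
\end{align*}
Multiplying by $e^{-2\beta t}$, using $\|v(s,y)-w(s,y)\|_p^2\le e^{2\beta s}\mathcal{N}_{\beta,p}(v-w)^2$, and invoking Lemma~\ref{lemma 2.1}(iii) to bound $\int_0^1(G^n(r,x,y))^2\,dy\le 1+\sqrt{\pi/(8r)}$, the $s$-integral reduces to $Q(\beta):=\int_0^\infty e^{-2\beta r}\big(1+\sqrt{\pi/(8r)}\big)\,dr$, which is finite for every $\beta>0$ and tends to $0$ as $\beta\to\infty$. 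Thus $\mathcal{N}_{\beta,p}(\mathcal{T}v-\mathcal{T}w)\le\big(4p\lambda^2 L_\sigma^2 Q(\beta)\big)^{1/2}\mathcal{N}_{\beta,p}(v-w)$, so choosing $\beta=\beta(n,p,\lambda,L_\sigma)$ large makes $\mathcal{T}$ a strict contraction on $\mathcal{L}^{\beta,p}$; the same bound with $w\equiv 0$ — together with the boundedness of the deterministic term and $\sup_{t\ge0}e^{-2\beta t}\int_0^t(1+(t-s)^{-1/2})\,ds<\infty$ — shows $\mathcal{T}$ maps $\mathcal{L}^{\beta,p}$ into itself. Banach's fixed point theorem then produces a unique $u^n\in\mathcal{L}^{\beta,p}\subset\bigcup_{\beta>0}\mathcal{L}^{\beta,p}$ solving \eqref{mild fdm}; a.s.\ uniqueness in the class described follows since any random field satisfying the exponential moment bound automatically has finite $\mathcal{N}_{\beta,p}$-norm for $\beta$ large, hence coincides with the fixed point.

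Finally, to get the sharp a priori bound I would plug $u^n$ back into \eqref{mild fdm}, set $M(t):=\sup_{x\in[0,1]}\|u^n(t,x)\|_p$ (finite for each $t$ because $u^n\in\mathcal{L}^{\beta,p}$), and use BDG, $|\sigma(z)|\le|\sigma(0)|+L_\sigma|z|$ and Lemma~\ref{lemma 2.1}(iii) once more to reach a weakly singular integral inequality
\[
M(t)^2\le c_0+c_1\,p\lambda^2 L_\sigma^2\int_0^t\Big(1+\tfrac{1}{\sqrt{t-s}}\Big)\big(1+M(s)^2\big)\,ds,
\]
with $c_0=c_0\big(\sup_x u_0(x),n\big)$ and $c_1$ an absolute constant. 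A Gronwall lemma tailored to the half-power singular kernel (critical index $2$) then gives $M(t)^2\le C\exp\{C\,p^2\lambda^4 L_\sigma^4 t\}$, and raising to the power $p/2$ yields $\sup_{x\in[0,1]}\mathbb{E}|u^n(t,x)|^p=M(t)^p\le C_1^p\exp\{C_2 L_\sigma^4\lambda^4 p^3 t\}$, as claimed (the cubic power of $p$ being the $(p/2)$-th power of the quadratic-in-$p$ exponent of $M(t)^2$). The steps that I expect to demand the most care are: (i) $G^n$ is not nonnegative for all $t$ (only for $t>t(n)$, by Lemma~\ref{lemma 2.1}(iv)), so every spatial $L^1$-manipulation must be routed through the $L^2$ estimate of Lemma~\ref{lemma 2.1}(iii) instead of through $\int_0^1 G^n\,dy=1$; and (ii) carrying the constants through the singular Gronwall step so that the exponent lands at $C\lambda^4 p^3$ rather than at the weaker $C\lambda^2 p$ that the non-singular part $1$ of the kernel alone produces.
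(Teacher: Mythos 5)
Your proposal is correct and follows essentially the same route as the paper: a Picard/fixed-point argument in the weighted space $\mathcal{L}^{\beta,p}$, with the stochastic convolution controlled via Burkholder--Davis--Gundy, Minkowski and the square-integral bound of Lemma~\ref{lemma 2.1}(\romannumeral3), and the contraction achieved by taking $\beta\gtrsim p^{2}\lambda^{4}L_{\sigma}^{4}$ (which is exactly where the $p^{3}$ in the exponent comes from after raising to the power $p$). Your closing singular-Gronwall pass is an equivalent but redundant repackaging: once the contraction forces $\beta\sim p^{2}\lambda^{4}L_{\sigma}^{4}$, the bound $\mathcal{N}_{\beta,p}^{2}(u^{n})\le C_{1}$ already yields $\mathbb{E}|u^{n}(t,x)|^{p}\le C_{1}^{p/2}e^{p\beta t}$ directly, which is how the paper concludes.
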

\begin{proof}
We apply Picard's iteration by 
defining
\begin{align*}
&u^n_{(0)}(t,x):=u(0,x),\\
&u^n_{(q+1)}(t,x):=\int_{0}^{1}G^n(t,x,y)u(0,\kappa_n(y))\,dy+\lambda\int_{0}^{t}\int_{0}^{1}G^n(t-s,x,y)\sigma\left(u^n_{(q)}(s,\kappa_n(y))\right)\,dW(s,y).
\end{align*}
Using Lemma \ref{lemma 2.1} $(\romannumeral2)$ $(\romannumeral3)$, combining the linear growth of $\sigma$, Minkowski inequality and Burkholder-Davis-Gundy inequality, we obtain
\begin{align}\label{semi3}
\left\|u^n_{(q+1)}(t,x)\right\|^2_p\leq& \;2\sup_{x\in[0,1]}\left|u_0(x)\right|^2\times \int_0^1\left(G^n(t,x,y)\right)^2\,dy\nonumber\\
&+8p\lambda^2\int_0^t\int_0^1\left(G^n(t-s,x,y)\right)^2\left\|\sigma\left(u^n_{(q)}(s,\kappa_n(y))\right)\right\|^2_k\,ds\,dy\nonumber\\
\leq &\;2\sup_{x\in[0,1]}\left|u_0(x)\right|^2\times \left(2\sum_{j=0}^{n-1}
e^{2\lambda^n_jt}\right)\nonumber\\
&+CL^2_{\sigma}p\lambda^2\left(\sqrt{t}+t\right)+CL^2_{\sigma}p\lambda^2\int_{0}^{t}\left(\frac{1}{\sqrt{t-s}}+1\right)\sup_{y\in[0,1]}\left\|u^n_{(k)}(s,y)\right\|^2_p\,ds.
\end{align}
Multiplying $e^{-2\beta t}$ with $2\beta\ge1$ on both sides of \eqref{semi3}, taking supremum over $x\in[0,1],t\ge 0$, and noticing $2\sum_{j=0}^{n-1}e^{2\lambda^n_jt}\leq 2n$, we get
\begin{align*}
\mathcal{N}^2_{\beta,p}\left(u^n_{(q+1)}\right)
\leq &\;4\sup_{x\in[0,1]}\left|u_0(x)\right|^2\times n+\frac{CL^2_{\sigma}p\lambda^2}{\sqrt{4\beta e}}+\frac{CL^2_{\sigma}p\lambda^2}{2\beta e}+CL^2_{\sigma}p\lambda^2\left(\sqrt{\frac{\pi}{2\beta}}+\frac{1}{2\beta}\right)\mathcal{N}^2_{\beta,p}\left(u^n_{(q)}\right)\\
\leq &\;4\sup_{x\in[0,1]}\left|u_0(x)\right|^2\times n+\frac{3CL^2_{\sigma}p\lambda^2}{\sqrt{2\beta}}+\frac{3CL^2_{\sigma}p\lambda^2}{\sqrt{2\beta}}\mathcal{N}^2_{\beta,p}\left(u^n_{(q)}\right),
\end{align*}
where in the last step we have used $\sqrt{\frac{\pi}{2\beta}}+\frac{1}{2\beta}\leq \frac{3}{\sqrt{2\beta}}$ for $\beta\ge\frac{1}{2}.$\\
There exists a $\beta$ such that 
\begin{align}\label{beta}
\frac{3CL^2_{\sigma}p\lambda^2}{2\beta}\leq\frac{1}{2} \quad\text{and}\quad \beta\ge \frac{1}{2}.
\end{align}
For example, one can choose $\beta=18C^2L^2_{\sigma}p^2\lambda^4+\frac{1}{2}.$ For such $\beta,$ 
we have
\begin{align}\label{semi5}
\mathcal{N}^2_{\beta,p}\left(u^n_{(q+1)}\right)
\leq&\;4\sup_{x\in[0,1]}\left|u_0(x)\right|^2\times n+\frac{1}{2}+\frac{1}{2}\mathcal{N}^2_{\beta,p}\left(u^n_{(q)}\right)\nonumber\\
 =&:\eta +\frac{1}{2}\mathcal{N}^2_{\beta,p}\left(u^n_{(q)}\right)
\leq 2\eta +\sup_{x\in[0,1]}\left|u_0(x)\right|^2=:C_1, 
\end{align}
which yields $u^n_{(q+1)}\in\mathcal{L}^{\beta,p}$.\\
Eq. \eqref{semi5} implies that for all $t\ge 0, x\in[0,1]$ and $\beta$ satisfying \eqref{beta},
\begin{align*}
\mathbb{E}\left(\left|u^n_{(q+1)}(t,x)\right|^p\right)\leq C^{\frac{p}{2}}_1\exp\left\{\beta t\right\}
\end{align*}
for each $p\ge 2,q\ge 0$.

Similarly, using the technique as before, we can prove
\begin{align*}
\mathcal{N}^2_{\beta,p}\left(u^n_{(q+1)}-u^n_{(q)}\right)\leq \frac{3CL^2_{\sigma}p\lambda^2}{\sqrt{2\beta}}\mathcal{N}^2_{\beta,p}\left(u^n_{(q)}-u^n_{(q-1)}\right).
\end{align*}
By choosing
$\beta$ satisfying \eqref{beta},
we obtain that $\left\{u^n_{(q)}(t,x)\right\}_{q\ge 0}$ is a Cauchy sequence in $\mathcal{N}_{\beta,p}$-norm, i.e. $\left\{u^n_{(q)}(t,x)\right\}_{q\ge 0}$ converges to some random field $u^n$ in $\mathcal{N}_{\beta,p}$-norm for each fixed $p\ge 2$. Since $\mathcal{L}^{\beta,p}$ is complete, we deduce that $u^n\in\mathcal{L}^{\beta,p}$. Moreover, $u^n$ satisfies the integral equation \eqref{mild fdm} in $\mathcal{N}_{\beta,p}$-norm.

The uniqueness of the numerical solution in $\mathcal{N}_{\beta,p}$-norm can be shown in a similar way as above. Thus the proof is completed.
\end{proof}

Based on Proposition \ref{proposition 2.2}, we can give the upper bound of the upper $p$th moment Lyapunov exponent of numerical solution of the spatially semi-discrete scheme.
\begin{prop}\label{thm3.3}
There exists a positive constant $C$, such that for each $n\ge 3,\;p\in[2,\infty)$, we have
\begin{align*}
\sup_{x\in[0,1]}\bar{\gamma}^n_p(x):=\sup_{x\in[0,1]}\limsup_{t\rightarrow \infty}\frac{1}{t}\log \mathbb{E}\left(\left|u^n(t,x)\right|^p\right)\leq CL^4_{\sigma}\lambda^4p^3.
\end{align*}
\end{prop}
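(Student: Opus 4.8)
The plan is to read off Proposition~\ref{thm3.3} directly from the a priori moment bound already obtained in Proposition~\ref{proposition 2.2}. By that proposition, for every $n\ge 3$, every $p\ge 2$, every $x\in[0,1]$ and every $t\ge 0$,
\[
\mathbb{E}\left(\left|u^n(t,x)\right|^p\right)\ \le\ \sup_{y\in[0,1]}\mathbb{E}\left(\left|u^n(t,y)\right|^p\right)\ \le\ C_1^p\exp\left\{C_2L_{\sigma}^4\lambda^4p^3t\right\},
\]
where $C_1=C_1\big(\sup_{y\in[0,1]}u_0(y),n\big)>0$ and $C_2>0$ are constants that do \emph{not} depend on $t$. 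First I would take $\tfrac1t\log$ of both sides, obtaining, for each fixed $x$,
\[
\frac1t\log\mathbb{E}\left(\left|u^n(t,x)\right|^p\right)\ \le\ \frac{p\log C_1}{t}+C_2L_{\sigma}^4\lambda^4p^3 .
\]
Then I would let $t\to\infty$: since $C_1$ is independent of $t$, the term $p\log C_1/t$ vanishes in the limit, so $\bar{\gamma}^n_p(x)=\limsup_{t\to\infty}\tfrac1t\log\mathbb{E}(|u^n(t,x)|^p)\le C_2L_{\sigma}^4\lambda^4p^3$. Taking the supremum over $x\in[0,1]$ and renaming $C:=C_2$ finishes the proof.

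The single point that requires a little care — and hence the (admittedly mild) main obstacle — is to make sure the $n$-dependence buried inside $C_1$ does not contaminate the exponential rate. This is exactly why Proposition~\ref{proposition 2.2} is stated in the above form: the $n$-dependent (and $u_0$-dependent) constant enters only through the \emph{$t$-independent} polynomial prefactor $C_1^p$, while the exponential growth rate $C_2L_{\sigma}^4\lambda^4p^3$ is already uniform in $n$. Dividing by $t$ and sending $t\to\infty$ annihilates the prefactor, leaving a bound on $\bar{\gamma}^n_p(x)$ that is uniform in both $n$ and $x$, as asserted; no additional estimate is needed, since the content of the $Cp^3\lambda^4$ rate is entirely carried by Proposition~\ref{proposition 2.2}.

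If instead one wanted a self-contained argument not routed through Proposition~\ref{proposition 2.2}, I would rerun the fixed-point/Gr{\"o}nwall estimate of Section~\ref{sec3.1}: using the Burkholder--Davis--Gundy inequality (whose $L^p$-constant is the source of the powers of $p$) together with Lemma~\ref{lemma 2.1}~$(\romannumeral2)$--$(\romannumeral3)$, and bounding the deterministic term via $\sum_{j=0}^{n-1}e^{2\lambda^n_jt}\le n$, one arrives at a renewal-type inequality for $\sup_{x\in[0,1]}\|u^n(t,x)\|_p^2$ with the singular kernel $CL_{\sigma}^2p\lambda^2\big((t-s)^{-1/2}+1\big)$; solving it in the weighted norm $\mathcal{N}_{\beta,p}$ with $\beta$ chosen as in \eqref{beta} gives a bound of the shape $\mathbb{E}(|u^n(t,x)|^p)\le C_1^p e^{\beta t}$, and the same $\limsup$ computation then yields $\sup_{x\in[0,1]}\bar{\gamma}^n_p(x)\le CL_{\sigma}^4\lambda^4p^3$. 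Either way, nothing beyond the tools of Section~\ref{sec3} is required.
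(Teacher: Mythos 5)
Your proposal is correct and coincides with the paper's intended argument: the paper states Proposition \ref{thm3.3} as an immediate consequence of Proposition \ref{proposition 2.2}, and your computation (take $\tfrac1t\log$, note the $n$-dependent prefactor $C_1^p$ is $t$-independent so it disappears in the $\limsup$) is exactly that deduction. No gap.
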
 

\subsection{Intermittent lower bound}\label{sec3.3}
It remains to investigate the lower bound for the upper $2$th moment Lyapunov exponent.
Before that, we give the following reverse Gr{\"o}nwall's inequality.
\begin{lemma}{(Reverse Gr{\"o}nwall's inequality)}\label{lemma 2.4}
Let $\phi$ be nonnegative and satisfy $$ \phi (t)\ge \alpha+\beta \int_{0}^{t}\phi(s)\,ds$$ for $t>a>0,$ where $\alpha,\beta>0$ are constants, then for $t>a$,
$$
\phi(t)\ge  e^{\beta (t-a)}\left(\alpha+\beta\int_0^a\phi(s)\,ds\right).
$$
\end{lemma}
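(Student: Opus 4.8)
The plan is to prove the reverse Gr\"onwall's inequality by a Gr\"onwall-type argument applied to the accumulated integral, exactly as in the classical (forward) case but with the inequality reversed. Define $\Psi(t):=\int_0^t\phi(s)\,ds$ for $t>a$, which is absolutely continuous and nonnegative since $\phi\geq 0$. The hypothesis $\phi(t)\geq \alpha+\beta\Psi(t)$ for $t>a$ then says $\Psi'(t)\geq \alpha+\beta\Psi(t)$ for a.e.\ $t>a$.

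First I would introduce the integrating factor $e^{-\beta t}$ and rewrite the differential inequality as $\frac{d}{dt}\bigl(e^{-\beta t}\Psi(t)\bigr)=e^{-\beta t}\bigl(\Psi'(t)-\beta\Psi(t)\bigr)\geq \alpha e^{-\beta t}$ for a.e.\ $t>a$. Integrating this from $a$ to $t$ gives
\begin{align*}
e^{-\beta t}\Psi(t)-e^{-\beta a}\Psi(a)\geq \alpha\int_a^t e^{-\beta s}\,ds=\frac{\alpha}{\beta}\bigl(e^{-\beta a}-e^{-\beta t}\bigr),
\end{align*}
so that
\begin{align*}
\Psi(t)\geq e^{\beta(t-a)}\Psi(a)+\frac{\alpha}{\beta}\bigl(e^{\beta(t-a)}-1\bigr).
\end{align*}
Then I would feed this lower bound on $\Psi(t)$ back into the original hypothesis:
\begin{align*}
\phi(t)\geq \alpha+\beta\Psi(t)\geq \alpha+\beta e^{\beta(t-a)}\Psi(a)+\alpha\bigl(e^{\beta(t-a)}-1\bigr)=\alpha e^{\beta(t-a)}+\beta e^{\beta(t-a)}\int_0^a\phi(s)\,ds,
\end{align*}
which is exactly $\phi(t)\geq e^{\beta(t-a)}\bigl(\alpha+\beta\int_0^a\phi(s)\,ds\bigr)$, the claimed bound.

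The only point requiring mild care — and the closest thing to an obstacle — is the regularity justification: $\phi$ is merely assumed nonnegative (not continuous), so one should argue via $\Psi$, which is Lipschitz on compact subintervals of $(a,\infty)$ because $\phi$ is locally integrable (being dominated on bounded sets, via the hypothesis, by an affine function of $\Psi$, hence locally bounded once one knows $\Psi$ is finite), and therefore differentiable a.e.\ with the fundamental theorem of calculus applicable to $e^{-\beta t}\Psi(t)$. Alternatively, one can avoid differentiation entirely by a direct iteration/bootstrap on the integral inequality, or simply remark that in all applications in this paper $\phi$ is continuous, in which case the argument above is completely rigorous as stated. I would present the integrating-factor computation as the main line and add a sentence addressing the measurability technicality.
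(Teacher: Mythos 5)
Your proof is correct and follows essentially the same route as the paper: the paper simply splits $\int_0^t=\int_0^a+\int_a^t$ to reduce to the standard reverse Gr\"onwall inequality with constant $\alpha+\beta\int_0^a\phi(s)\,ds$ and declares the rest easy, while you carry out the underlying integrating-factor computation explicitly (and sensibly flag the measurability point the paper ignores). No gaps.
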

\begin{proof}
Note that $\phi$ satisfies $ \phi (t)\ge \left(\alpha+\beta\int_0^a\phi(s)\,ds\right)+\beta \int_{a}^{t}\phi(s)\,ds$ for $t>a$, we can easily get the desired result.
\end{proof}
\begin{prop}\label{thm3.5}
Under Assumption \ref{Assumption 2}, we have
\begin{align*}
\inf_{x\in[0,1]}\bar{\gamma}^n_2(x)\ge \lambda^2J^2_0>0.
\end{align*}
\end{prop}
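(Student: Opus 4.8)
The plan is to derive a reverse-Gr\"onwall-type lower bound for $t\mapsto g(t):=\min_{0\le k\le n-1}\mathbb{E}|u^n(t,k/n)|^2$ and then invoke Lemma \ref{lemma 2.4}. First I would apply the Walsh isometry to the mild form \eqref{mild fdm} (recall that $G^n$ is real-valued), which gives
$$\mathbb{E}|u^n(t,x)|^2=\Big(\int_0^1 G^n(t,x,y)u^n(0,\kappa_n(y))\,dy\Big)^2+\lambda^2\int_0^t\int_0^1 \big(G^n(t-s,x,y)\big)^2\,\mathbb{E}\big[\sigma(u^n(s,\kappa_n(y)))^2\big]\,ds\,dy.$$
By the definition of $J_0$ in Assumption \ref{Assumption 2}, $\sigma(z)^2\ge J_0^2z^2$ for every $z\in\mathbb{R}$, so the double integral is at least $\lambda^2J_0^2\int_0^t\int_0^1(G^n(t-s,x,y))^2\mathbb{E}[u^n(s,\kappa_n(y))^2]\,ds\,dy$. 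Since both $y\mapsto G^n(t-s,x,y)$ and $y\mapsto\kappa_n(y)$ are constant on each subinterval $[k/n,(k+1)/n)$, the inner integral equals $\sum_{k=0}^{n-1}\frac1n(G^n(t-s,x,k/n))^2\mathbb{E}[u^n(s,k/n)^2]$; bounding each factor $\mathbb{E}[u^n(s,k/n)^2]$ from below by $g(s)$ and using Lemma \ref{lemma 2.1}(ii) (so that $\sum_{k=0}^{n-1}\frac1n(G^n(t-s,x,k/n))^2=\int_0^1(G^n(t-s,x,y))^2\,dy\ge1$), I obtain
$$\mathbb{E}|u^n(t,x)|^2\ge \lambda^2J_0^2\int_0^t g(s)\,ds\qquad\text{for all }x\in[0,1],\ t>0.$$

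Next I would close the inequality. Taking $x=k/n$ and restricting to $t>t(n)$, Lemma \ref{lemma 2.1}(iv) gives $G^n(t,k/n,\cdot)\ge0$, so together with $\int_0^1G^n(t,k/n,y)\,dy=1$ (Lemma \ref{lemma 2.1}(i)) and $u^n(0,\cdot)\ge I_0>0$ the deterministic term is bounded below by $I_0$, hence its square by $I_0^2$. Combining this with the bound above yields $\mathbb{E}|u^n(t,k/n)|^2\ge I_0^2+\lambda^2J_0^2\int_0^t g(s)\,ds$ for every $k$ and every $t>t(n)$, and taking the minimum over $k$ gives $g(t)\ge I_0^2+\lambda^2J_0^2\int_0^t g(s)\,ds$ for $t>t(n)$; here $g$ is finite and locally bounded by Proposition \ref{proposition 2.2}, so all integrals are meaningful. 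Applying Lemma \ref{lemma 2.4} with $a=t(n)$, $\alpha=I_0^2$, $\beta=\lambda^2J_0^2$ --- both strictly positive under Assumption \ref{Assumption 2} --- I conclude $g(t)\ge I_0^2e^{\lambda^2J_0^2(t-t(n))}$ for all $t>t(n)$.

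Substituting this back into the displayed lower bound for $\mathbb{E}|u^n(t,x)|^2$, for arbitrary $x\in[0,1]$ and $t>t(n)$ I get $\mathbb{E}|u^n(t,x)|^2\ge I_0^2\big(e^{\lambda^2J_0^2(t-t(n))}-1\big)$, and therefore $\bar{\gamma}^n_2(x)=\limsup_{t\to\infty}\frac1t\log\mathbb{E}|u^n(t,x)|^2\ge\lambda^2J_0^2$ uniformly in $x\in[0,1]$, which is the assertion. The only genuine difficulty --- and the reason Lemma \ref{lemma 2.1}(iv) and the reverse Gr\"onwall inequality Lemma \ref{lemma 2.4} enter here, unlike in the continuous setting where $G\ge0$ always --- is that $G^n$ need not be non-negative for small $t$, so one cannot start Gr\"onwall's argument at $t=0$; instead one waits until $t>t(n)$, where the positivity of $G^n$ produces the indispensable additive constant $I_0^2$, and since the Lyapunov exponent is defined by a $\limsup$ the behaviour of $g$ on $[0,t(n)]$ is irrelevant.
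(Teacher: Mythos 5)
Your proposal is correct and follows essentially the same route as the paper's proof: Walsh isometry applied to \eqref{mild fdm}, the lower bounds of Lemma \ref{lemma 2.1} $(\romannumeral1)$ $(\romannumeral2)$ $(\romannumeral4)$, and the reverse Gr\"onwall inequality of Lemma \ref{lemma 2.4} started at $a=t(n)$. The only cosmetic difference is that you track the minimum of the second moment over the grid points and pass back to arbitrary $x$ through the stochastic-integral term at the end, whereas the paper takes the infimum over all of $[0,1]$ directly.
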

\begin{proof}
For each fixed $n\ge 3,$ taking the second moment on both sides of \eqref{mild fdm}, combining Walsh isometry and Lemma \ref{lemma 2.1} $(\romannumeral1)$ $(\romannumeral2)$ $(\romannumeral4)$, we get when $t>t(n),$
\begin{align*}
&\mathbb{E}\left(\left|u^n(t,x)\right|^2\right)\\
=&\;\Big|\int_{0}^{1}G^n(t,x,y)u^n(0,\kappa_n(y))\,dy\Big|^2+\lambda^2\int_{0}^{t}\int_{0}^{1}\left(G^n(t-s,x,y)\right)^2\mathbb{E}\left(\left|\sigma(u^n(s,\kappa_n(y)))\right|^2\right)\,dsdy\\
\ge&\; I^2_0\Big|\int_{0}^{1}G^n(t,x,y)\,dy\Big|^2+\lambda^2J^2_0\int_{0}^{t}\int_{0}^{1}\left(G^n(t-s,x,y)\right)^2\mathbb{E}\left(\left|u^n(s,\kappa_n(y))\right|^2\right)\,dsdy\\
\ge&\; I^2_0+\lambda^2J^2_0\int_{0}^{t}\int_{0}^{1}\left(G^n(t-s,x,y)\right)^2\,dy\inf_{y\in[0,1]}\mathbb{E}\left(\left|u^n(s,y)\right|^2\right)\,ds\\
\ge&\; I^2_0+\lambda^2J^2_0\int_{0}^{t}\inf_{y\in[0,1]}\mathbb{E}\left(\left|u^n(s,y)\right|^2\right)\,ds.
\end{align*}
Taking infimum over $x\in[0,1]$, we have
\begin{align*}
\inf_{x\in[0,1]}\mathbb{E}\left(\left|u^n(t,x)\right|^2\right)\ge I^2_0+\lambda^2J^2_0\int_{0}^{t}\inf_{y\in[0,1]}\mathbb{E}\left(\left|u^n(s,y)\right|^2\right)\,ds.
\end{align*}
Applying Lemma \ref{lemma 2.4} with $\alpha=I^2_0,\beta=\lambda^2J^2_0, a=t(n)$, we obtain 
\begin{align*}
\inf_{x\in[0,1]}\mathbb{E}\left(\left|u^n(t,x)\right|^2\right)\ge I^2_0e^{\lambda^2J^2_0(t-t(n))},\quad \text{for } t>t(n),
\end{align*}
which leads to $$\inf_{x\in[0,1]}\bar{\gamma}^n_2(x)=\inf_{x\in[0,1]}\limsup_{t\rightarrow \infty}\frac{1}{t}\log \mathbb{E}\left(\left|u^n(t,x)\right|^2\right)\ge \lambda^2J^2_0>0.$$
Hence we finish the proof.
\end{proof}
\subsection{Sharp exponential order of the second moment}
It is shown in Section \ref{sec2} that the second moment of the solution of \eqref{she} has the sharp exponential order $C\lambda^4t$ under Assumption \ref{Assumption 2}. By applying a renewal approach, we can get the same kind of result for the numerical solution of the semi-discrete scheme for large $n$, provided additionally that the initial data is a positive constant.
\begin{assumption}\label{assumption3}
We assume that $u_0:\equiv I_0>0,$ and the spatial partition number $n$ satisfies $n\ge\zeta\lambda^2$ with some constant $\zeta>0.$
\end{assumption}
\begin{thm}\label{thm5.1}
Under Assumptions \ref{Assumption 2} and \ref{assumption3}, for each $n,$ we have
\begin{align}\label{sec5eq6}
\inf_{x\in[0,1]}\mathbb{E}\left(\left|u^n(t,\kappa_n(x))\right|^2\right)\ge Ce^{C_2 J^4_0\lambda^4t},\quad t>T,
\end{align}
where $T:=T(n)>0,\;C_1=\frac{8\pi\zeta I^2_0}{J^2_0+8\pi\zeta},\;C_2=\frac{2\zeta^2\pi^2 }{(J^2_0+8\pi\zeta)^2}$.
\end{thm}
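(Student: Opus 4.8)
The plan is to adapt the renewal-theoretic argument of Khoshnevisan–Kim–Xiao (used here for the continuous case) to the discretized Green function $G^n$, exploiting that under Assumption~\ref{assumption3} the initial data is the constant $I_0$ so that $\int_0^1 G^n(t,x,y)u^n(0,\kappa_n(y))\,dy=I_0$. Starting from the mild form \eqref{mild fdm}, take second moments, use Walsh isometry, and bound $\mathbb{E}(|\sigma(u^n(s,\kappa_n(y)))|^2)\ge J_0^2\,\mathbb{E}(|u^n(s,\kappa_n(y))|^2)$ via Assumption~\ref{Assumption 2}. Writing $M_n(t):=\inf_{x\in[0,1]}\mathbb{E}(|u^n(t,\kappa_n(x))|^2)$, this produces a renewal inequality of the form
\begin{align*}
M_n(t)\ge I_0^2+\lambda^2 J_0^2\int_0^t H_n(t-s)\,M_n(s)\,ds,
\end{align*}
where $H_n(r):=\inf_{x\in[0,1]}\int_0^1 (G^n(r,x,\kappa_n(y)))^2\,dy$ (or the value at grid points), and the key input is a \emph{lower} bound on $H_n$ that is sharp enough — integrable at $r=0$ and decaying no faster than $C/\sqrt{r}$ — to yield the exponent $C\lambda^4 t$ rather than merely $C\lambda^2 t$. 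By Lemma~\ref{lemma 2.1}(ii), $\int_0^1(G^n(r,x,y))^2\,dy=\sum_j e^{2\lambda_j^n r}|e_j^n(x)|^2$; restricting to grid points $x=\kappa_n(x)$ makes $|e_j^n(x)|=1$, so on the grid this sum equals $\sum_j e^{2\lambda_j^n r}$, and I need the lower estimate $\sum_j e^{2\lambda_j^n r}\ge \tfrac{c}{\sqrt{r}}$ for $r$ not too large, with the constant $c$ tracked explicitly; the condition $n\ge\zeta\lambda^2$ is exactly what guarantees enough Fourier modes survive to recover the $1/\sqrt{r}$ behaviour over the relevant time scale.

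The main steps, in order: (1) record the renewal inequality above, using part~(i) of Lemma~\ref{lemma 2.1} for the constant term and isometry for the stochastic term; (2) prove the finer integral lower bound $\int_0^1 (G^n(r,\kappa_n(x),y))^2\,dy\ge \tfrac{1}{\sqrt{8\pi r}}$ (the discrete analogue of Lemma~\ref{lemma2.5}(ii)) for $r$ in a range dictated by $n\ge\zeta\lambda^2$ — e.g. for $r$ up to order $n^2$, or rescaled so the $1/\sqrt r$ tail is valid on the timescale $1/\lambda^4$ that matters — by comparing $\sum_j e^{-8\pi^2 j^2 c_j^n r}$ with a Gaussian integral from below, using $c_j^n\le 1$; (3) feed this into the renewal inequality to get $M_n(t)\ge I_0^2+\tfrac{\lambda^2 J_0^2}{\sqrt{8\pi}}\int_0^t \tfrac{1}{\sqrt{t-s}}M_n(s)\,ds$ on the appropriate range; (4) solve this renewal inequality by Laplace transform / the explicit Mittag-Leffler-type kernel, or equivalently iterate and compare with the known solution, obtaining a lower bound that grows like $\exp\{c(\lambda^2 J_0^2)^2 t\}=\exp\{c\lambda^4 J_0^4 t\}$; (5) match constants to extract precisely $C_1=\tfrac{8\pi\zeta I_0^2}{J_0^2+8\pi\zeta}$ and $C_2=\tfrac{2\zeta^2\pi^2}{(J_0^2+8\pi\zeta)^2}$, and invoke Lemma~\ref{lemma 2.1}(iv) together with the threshold $T(n)$ to make the estimate unconditional for $t>T(n)$.

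The hard part will be step~(2) combined with the bookkeeping in step~(5): one must show the discrete Green function's $L^2$-mass stays above $c/\sqrt{r}$ on a time window long enough to let the renewal mechanism build the full $\lambda^4$ exponent, and the only resource available is the number of Fourier modes, which is $n\sim\zeta\lambda^2$. Because $\lambda_j^n=-4n^2\sin^2(j\pi/n)$ truncates the heat semigroup's high frequencies, the tail $\sum_{j>\lfloor n/2\rfloor}$ is simply absent, so the bound $\sum_j e^{2\lambda_j^n r}\ge c/\sqrt r$ can only hold for $r\gtrsim 1/n^2\sim 1/(\zeta^2\lambda^4)$; reconciling this lower cutoff with the need to integrate the renewal kernel down to $0$ — and showing the lost contribution near $s=0$ is harmless because $M_n$ is already bounded below by $I_0^2$ there — is the delicate coupling that the hypothesis $n\ge\zeta\lambda^2$ is designed to handle, and getting the explicit constants $C_1,C_2$ out of the resulting renewal estimate requires carrying the $\zeta$-dependence through every inequality rather than absorbing it into a generic $C$.
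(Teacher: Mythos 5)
Your proposal follows essentially the same route as the paper's proof: the renewal inequality obtained from the mild form \eqref{mild fdm} via Walsh isometry, the finer lower bound on $\int_0^1(G^n(r,\kappa_n(x),y))^2\,dy$ at grid points (Lemma \ref{lemma 5.1}), and the Renewal Theorem plus a super-solution comparison, with $n\ge\zeta\lambda^2$ used exactly as you describe to keep the full $\lambda^4$ exponent and the explicit constants. The one execution difference is that instead of truncating the $1/\sqrt{r}$ bound to $r\gtrsim 1/n^2$ and patching the lost region (which, note, sits at $s$ near $t$ where $t-s$ is small, not at $s=0$), the paper proves $\int_0^1(G^n(r,\kappa_n(x),y))^2\,dy\ge\frac{1-e^{-2n^2\pi^2 r}}{\sqrt{32\pi r}}$ for \emph{all} $r>0$ and normalizes the tilted kernel $g(t)=be^{-\pi\mu^2b^2t}\frac{1-e^{-2n^2\pi^2 t}}{\sqrt{t}}$ into a probability density, the hypothesis $n\ge\zeta\lambda^2$ guaranteeing $\mu\ge\frac{8\pi\zeta}{J_0^2+8\pi\zeta}$ uniformly in $\lambda$.
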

Before giving the proof of Theorem \ref{thm5.1}, we present the refined property of the semi-discrete Green function and a probability density function for the renewal approach.
\begin{lemma}\label{lemma 5.1}
For $t>0,x\in[0,1],$ we have
\begin{align*}
\int_0^1\left(G^n(t,\kappa_n(x),y)\right)^2\,dy\ge\frac{1-e^{-2n^2\pi^2t}}{\sqrt{32\pi t}}.
\end{align*}
\end{lemma}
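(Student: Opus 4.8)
The plan is to start from the spectral identity for $\int_0^1\big(G^n(t,x,y)\big)^2\,dy$ established in Lemma~\ref{lemma 2.1}$(\romannumeral2)$ and to exploit the fact that $\kappa_n(x)$ is a grid point. Writing $\kappa_n(x)=k/n$ with $k\in\{0,1,\dots,n-1\}$, one has $\kappa_n(k/n)=k/n$, so the linear-interpolation correction in the definition of $e^n_j$ vanishes at this point and $e^n_j(\kappa_n(x))=e_j(k/n)=e^{2\pi\mathbf{i}jk/n}$, whence $|e^n_j(\kappa_n(x))|^2=1$ for every $j$. Consequently Lemma~\ref{lemma 2.1}$(\romannumeral2)$ reduces to
\[
\int_0^1\big(G^n(t,\kappa_n(x),y)\big)^2\,dy=\sum_{j=0}^{n-1}e^{2\lambda^n_jt}=\sum_{j=0}^{n-1}e^{-8n^2\sin^2(j\pi/n)t}.
\]
This is the step that upgrades the bare bound ``$\ge1$'' from Lemma~\ref{lemma 2.1}$(\romannumeral2)$ to an estimate capturing the $t^{-1/2}$ growth as $t\to 0^+$; it genuinely uses that the first argument is a grid point, since for general $x$ one only has $|e^n_j(x)|\le 1$.

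Next I would discard the (nonnegative) terms with $\lfloor n/2\rfloor<j\le n-1$ and use $\sin\theta\le\theta$, so that $e^{-8n^2\sin^2(j\pi/n)t}\ge e^{-8\pi^2 j^2 t}$ for $0\le j\le\lfloor n/2\rfloor$, and then compare the resulting sum with an integral: since $z\mapsto e^{-8\pi^2z^2t}$ is positive and decreasing, each term dominates the integral of this function over $[j,j+1]$, giving
\[
\int_0^1\big(G^n(t,\kappa_n(x),y)\big)^2\,dy\ge\sum_{j=0}^{\lfloor n/2\rfloor}e^{-8\pi^2j^2t}\ge\int_0^{\lfloor n/2\rfloor+1}e^{-8\pi^2z^2t}\,dz\ge\int_0^{n/2}e^{-8\pi^2z^2t}\,dz,
\]
where the last step uses $\lfloor n/2\rfloor+1\ge n/2$ and positivity of the integrand.

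It then remains to evaluate this Gaussian integral and estimate its tail. One has $\int_0^{\infty}e^{-8\pi^2z^2t}\,dz=\tfrac12(8\pi t)^{-1/2}=(32\pi t)^{-1/2}$, so it suffices to show $\int_{n/2}^{\infty}e^{-8\pi^2z^2t}\,dz\le e^{-2n^2\pi^2t}(32\pi t)^{-1/2}$. This follows from the elementary bound $z^2\ge(z-\tfrac n2)^2+\tfrac{n^2}{4}$, valid for $z\ge n/2$, which gives $8\pi^2z^2t\ge 8\pi^2(z-\tfrac n2)^2t+2n^2\pi^2t$; substituting $w=z-\tfrac n2$ then bounds the tail by $e^{-2n^2\pi^2t}\int_0^{\infty}e^{-8\pi^2w^2t}\,dw=e^{-2n^2\pi^2t}(32\pi t)^{-1/2}$. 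Subtracting, $\int_0^{n/2}e^{-8\pi^2z^2t}\,dz\ge(1-e^{-2n^2\pi^2t})(32\pi t)^{-1/2}$, which is the desired inequality. The argument is a chain of elementary estimates; the only place demanding care is the tail bound, where the completion of the square must be arranged so as to produce exactly the exponent $2n^2\pi^2t$ that appears in the statement.
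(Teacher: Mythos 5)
Your proof is correct, and it follows the paper's argument step for step up to the reduction to the truncated Gaussian integral $\int_0^{n/2}e^{-8\pi^2z^2t}\,dz$: same use of the spectral identity with $|e^n_j(\kappa_n(x))|=1$ at grid points, same discarding of the terms $j>[n/2]$ via $\sin(j\pi/n)\le j\pi/n$, and the same sum-to-integral comparison. The only divergence is in how that last integral is bounded below: the paper squares the integral, symmetrizes to $[-\tfrac n2,\tfrac n2]^2$, and passes to polar coordinates over the inscribed disk, obtaining $\frac{1}{\sqrt{32\pi}}\sqrt{\frac{1-e^{-2n^2\pi^2t}}{t}}$ and then weakening via $\sqrt{1-e^{-a}}\ge 1-e^{-a}$; you instead evaluate the full Gaussian integral exactly and control the tail $\int_{n/2}^{\infty}$ by completing the square $z^2\ge(z-\tfrac n2)^2+\tfrac{n^2}{4}$, which lands exactly on the stated bound $\frac{1-e^{-2n^2\pi^2t}}{\sqrt{32\pi t}}$. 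Both computations are elementary and correct; yours is marginally more direct for this statement, while the paper's polar-coordinate route yields the slightly stronger intermediate bound with the square root.
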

\begin{proof}
Since $\left|e^n_j(\kappa_n(x))\right|^2=1,$ we obtain
\begin{align*}
&\int_0^1\left(G^n(t,\kappa_n(x),y)\right)^2\,dy=\sum_{j=0}^{n-1}e^{2\lambda^n_jt}\ge \sum_{j=0}^{\left[\frac{n}{2}\right]}e^{-8j^2\pi^2t}\ge \int_0^{\frac{n}{2}}e^{-8z^2\pi^2t}\,dz\\
=&\sqrt{\int_0^{\frac{n}{2}}\int_0^{\frac{n}{2}}e^{-8(z^2+w^2)\pi^2t}\,dw\,dz}=\sqrt{\frac{1}{4}\int_{-\frac{n}{2}}^{\frac{n}{2}}\int_{-\frac{n}{2}}^{\frac{n}{2}}e^{-8(z^2+w^2)\pi^2t}\,dw\,dz}\\
\ge& \sqrt{\frac{1}{4}\int_0^{2\pi}\int_0^{\frac{n}{2}}e^{-8r^2\pi^2t}r\,dr\,d\theta}=\frac{1}{\sqrt{32\pi}}\sqrt{\frac{1-e^{-2n^2\pi^2t}}{t}}\ge\frac{1-e^{-2n^2\pi^2t}}{\sqrt{32\pi t}},
\end{align*}
where we have used the polar coordinate transformation in the last line. The proof is finished.
\end{proof}

\begin{lemma}\label{lemmag}
Let $b:=\frac{\lambda^2J^2_0}{\sqrt{32\pi}}$ and $n\ge \zeta\lambda^2$. Then $g(t)=be^{-\pi\mu^2b^2t}\times \frac{1-e^{-2n^2\pi^2t}}{\sqrt{t}}$ is a probability density function on $[0,\infty)$ with some suitable $\mu\ge \frac{8\pi\zeta}{J^2_0+8\pi\zeta}>0$. 
\end{lemma}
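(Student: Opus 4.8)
The plan is to check the two defining properties of a probability density: nonnegativity, which is immediate, and unit total mass, which is the substantive point. Since $b>0$ and $1-e^{-2n^2\pi^2t}\ge 0$ for all $t>0$, we have $g\ge 0$ on $(0,\infty)$. For the mass I would split $\frac{1-e^{-2n^2\pi^2t}}{\sqrt t}=\frac{1}{\sqrt t}-\frac{e^{-2n^2\pi^2t}}{\sqrt t}$, multiply by $b\,e^{-\pi\mu^2b^2t}$, and integrate term by term using the elementary identity $\int_0^\infty t^{-1/2}e^{-at}\,dt=\sqrt{\pi/a}$ for $a>0$ (a substitution reducing it to $\Gamma(1/2)=\sqrt\pi$). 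This yields
\[
\int_0^\infty g(t)\,dt
= b\Big(\sqrt{\tfrac{\pi}{\pi\mu^2b^2}}-\sqrt{\tfrac{\pi}{\pi\mu^2b^2+2n^2\pi^2}}\Big)
= \frac{1}{\mu}-\frac{b}{\sqrt{\mu^2b^2+2n^2\pi}}=:F(\mu).
\]
So the lemma amounts to producing some $\mu\in(0,1)$ with $F(\mu)=1$ and, moreover, $\mu\ge\mu_0:=\frac{8\pi\zeta}{J_0^2+8\pi\zeta}$.

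For existence I would argue by the intermediate value theorem. The function $F$ is continuous on $(0,1)$; as $\mu\downarrow 0$ the first term blows up while the second stays bounded by $b/\sqrt{2n^2\pi}$, so $F(\mu)\to+\infty$; and $F(1)=1-\frac{b}{\sqrt{b^2+2n^2\pi}}<1$. Hence $F$ attains the value $1$ in $(0,1)$. To force a root into $[\mu_0,1)$ it suffices to show $F(\mu_0)\ge 1$ and apply the intermediate value theorem on $[\mu_0,1]$. Since $\frac{1}{\mu_0}=1+\frac{J_0^2}{8\pi\zeta}$, the inequality $F(\mu_0)\ge1$ is equivalent to $\frac{b}{\sqrt{\mu_0^2b^2+2n^2\pi}}\le\frac{J_0^2}{8\pi\zeta}$, and this is exactly where the two hypotheses $b=\frac{\lambda^2J_0^2}{\sqrt{32\pi}}$ and $n\ge\zeta\lambda^2$ are used: discarding the nonnegative term $\mu_0^2b^2$ in the denominator and inserting $n^2\ge\zeta^2\lambda^4$ gives
\[
\frac{b}{\sqrt{\mu_0^2b^2+2n^2\pi}}\le\frac{b}{\sqrt{2\pi\zeta^2\lambda^4}}
=\frac{\lambda^2J_0^2/\sqrt{32\pi}}{\lambda^2\zeta\sqrt{2\pi}}=\frac{J_0^2}{8\pi\zeta},
\]
which is the required bound. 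Choosing $\mu$ to be any root of $F(\mu)=1$ in $[\mu_0,1)$ then makes $g$ a bona fide probability density on $[0,\infty)$, and this $\mu$ satisfies $\mu\ge\frac{8\pi\zeta}{J_0^2+8\pi\zeta}>0$.

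I do not expect a genuine obstacle: the computation collapses to the single Laplace-type integral above, and the rest is a monotonicity-free intermediate value argument. The step requiring the most care is the arrangement of the estimate $F(\mu_0)\ge1$, because it is precisely the inequality $\mu\ge\frac{8\pi\zeta}{J_0^2+8\pi\zeta}$ (together with the explicit constants $b$, $\mu^2b^2$) that is invoked downstream in the renewal argument leading to Theorem~\ref{thm5.1}, so one wants the constants traced through exactly rather than merely up to an unspecified factor.
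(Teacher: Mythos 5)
Your proof is correct and follows essentially the same route as the paper: both reduce the normalization to the identity $\int_0^\infty t^{-1/2}e^{-at}\,dt=\sqrt{\pi/a}$, arrive at the same equation $\frac{1}{\mu}-\frac{b}{\sqrt{\mu^2b^2+2n^2\pi}}=1$, solve it by the intermediate value theorem on $(0,1)$, and obtain the bound $\mu\ge\frac{8\pi\zeta}{J_0^2+8\pi\zeta}$ from $n\ge\zeta\lambda^2$. The only (harmless) difference is that the paper first finds a root in $(0,1)$ and then reads off the lower bound from the fixed-point relation $\mu=1/\bigl(\sqrt{b^2/(2n^2\pi+\mu^2b^2)}+1\bigr)$, whereas you verify $F(\mu_0)\ge 1$ up front so the IVT already places the root in $[\mu_0,1)$.
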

\begin{proof}
It suffices to find some $\mu>0$ such that
\begin{align*}
\int_0^{\infty}be^{-\pi\mu^2b^2t}\times \frac{1-e^{-2n^2\pi^2t}}{\sqrt{t}}\,dt=1,
\end{align*}
or equivalently, to prove that the continuous function
\begin{align*}
h(\mu):=\frac{b}{\sqrt{\mu^2b^2+2n^2\pi}}-\left(\frac{1}{\mu}-1\right)
\end{align*}
has a zero point $\mu>0$.
Since $n\ge\zeta\lambda^2,$ so $h(\mu)\leq \sqrt{\frac{b^2}{\mu^2b^2+2\zeta^2\lambda^4\pi}}-\left(\frac{1}{\mu}-1\right)\leq \sqrt{\frac{J^4_0}{64\zeta^2\pi^2}}-\left(\frac{1}{\mu}-1\right)$, which implies $h(0^+)<0$. It is obvious that $h(1^-)>0$ for each fixed $n.$ Hence, there exists a $\mu\in(0,1)$ such that $h(\mu)=0,$ and $g(t)$ is a probability density function with this $\mu$. Moreover, $\mu=1/\big(\sqrt{\frac{b^2}{2n^2\pi+\mu^2b^2}}+1\big)\ge \frac{8\pi\zeta}{J^2_0+8\pi\zeta}.$ The proof is finished.
\end{proof}

\noindent
\textbf{Proof of Theorem \ref{thm5.1}:}
\begin{proof}
 Taking the second moment on both sides of \eqref{mild fdm} with the space variable being $\kappa_n(x)$, combining Walsh isometry, Lemma \ref{lemma 2.1} $(\romannumeral1)$ $(\romannumeral2)$ and Lemma \ref{lemma 5.1}, we get
\begin{align}\label{sharp1}
\mathbb{E}\left(\left|u^n(t,\kappa_n(x))\right|^2\right)&\ge I^2_0+\lambda^2J^2_0\int_0^t\int_0^1\left(G^n(t-s,\kappa_n(x),y)\right)^2\,dy\inf_{y\in[0,1]}\mathbb{E}\left(\left|u^n(s,\kappa_n(y))\right|^2\right)\,ds\nonumber\\
&\ge I^2_0+\frac{\lambda^2J^2_0}{\sqrt{32\pi}}\int_0^t\frac{1-e^{-2n^2\pi^2(t-s)}}{\sqrt{t-s}}\inf_{y\in[0,1]}\mathbb{E}\left(\left|u^n(s,\kappa_n(y))\right|^2\right)\,ds.
\end{align}
Taking infimum over $x\in[0,1]$, then multiplying $e^{-\pi\mu^2b^2t}$ on both sides of \eqref{sharp1} with $b:=\frac{\lambda^2J^2_0}{\sqrt{32\pi}}$ and $\mu$ being a parameter that will be determined later, and denoting $$M^n(t):=e^{-\pi\mu^2b^2t}\inf_{x\in[0,1]}\mathbb{E}\left(\left|u^n(t,\kappa_n(x))\right|^2\right),$$ we obtain
\begin{align*}
M^n(t)\ge e^{-\pi\mu^2b^2t}I^2_0+\int_0^tbe^{-\pi\mu^2b^2(t-s)}\times \frac{1-e^{-2n^2\pi^2(t-s)}}{\sqrt{t-s}}M^n(s)\,ds.
\end{align*}
Consider 
\begin{align}\label{sec5eq4}
e^{-\pi\mu^2b^2t}f(t)= e^{-\pi\mu^2b^2t}I^2_0+\int_0^tg(t-s)e^{-\pi\mu^2b^2s}f(s)\,ds,
\end{align}
where $g(t)$ is defined as in Lemma \ref{lemmag} and is a probability density function. Hence, Renewal Theorem (see \cite[Theorem 8.5.14]{AKLS06}) ensures
\begin{align*}
\lim_{t\rightarrow\infty}e^{-\pi\mu^2b^2t}f(t)=\frac{\int_0^{\infty}e^{-\pi\mu^2b^2t}I^2_0\,dt}{\int_0^{\infty}tg(t)\,dt}\ge\frac{\int_0^{\infty}e^{-\pi\mu^2b^2t}I^2_0\,dt}{\int_0^{\infty}b\sqrt{t}e^{-\pi\mu^2b^2t}\,dt}=2\mu I^2_0.
\end{align*}
Therefore, there exists $T:=T(n)>0$, such that
\begin{align}\label{sec5eq5}
f(t)\ge \mu I^2_0e^{\pi\mu^2b^2t}, \quad \forall t>T.
\end{align}
Observing that $M^n(t)$ is a super-solution to \eqref{sec5eq4} and applying \cite[Theorem 7.11]{KD14}, we have
\begin{align*}
M^n(t)\ge e^{-\pi\mu^2b^2t}f(t),\quad \forall t>0,
\end{align*}
which together with \eqref{sec5eq5} implies 
\begin{align*}
\inf_{x\in[0,1]}\mathbb{E}\left(\left|u^n(t,\kappa_n(x))\right|^2\right)\ge \mu I^2_0e^{\pi\mu^2b^2t},\quad \forall t>T.
\end{align*}
Moreover,  by Lemma \ref{lemmag}, we have $\mu^2b^2\ge \frac{2\zeta^2\pi J^4_0\lambda^4}{(J^2_0+8\pi\zeta)^2}.$
This leads to \eqref{sec5eq6}.
Hence we complete the proof of the theorem.
\end{proof}
Theorem \ref{thm5.1} and Proposition \ref{proposition 2.2} indicate that the second moment of numerical solution to the spatially semi-discrete scheme grows at most and at least as $\exp\left\{C\lambda^4t\right\}$ as $t\rightarrow\infty.$

\subsection{Error estimations of semi-discrete scheme}\label{sec3.4}
In this subsection, we present the convergence result of the spatially semi-discrete scheme.
It is based on the error estimates of $G^n(t,x,y)$ and $G(t,x,y)$, whose proofs are postponed to Appendix \ref{aplemma3.3}. 
\begin{lemma}\label{lemma3.3}
(\romannumeral1) There exists a constant $C>0$ such that
\begin{align*}
\int_{0}^{\infty}\int_{0}^{1}\left|G(t,x,y)-G^n(t,x,y)\right|^2\,dy\,dt\leq \frac{C}{n}
\end{align*}
for all $x\in [0,1]$ and $n\ge 3$.\vspace{1 ex}\\
(\romannumeral2) For any $\frac{1}{2}<\alpha<1$, there exists a constant $C:=C(\alpha)>0$ such that 
\begin{align*}
\int_{0}^{1}\left|G(t,x,y)-G^n(t,x,y)\right|^2\,dy\leq Cn^{1-2\alpha}t^{-\alpha}
\end{align*}
for all $x\in[0,1],\;t>0$ and $n\ge 3$.
\end{lemma}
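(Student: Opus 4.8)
The plan is to work with the centered spectral representations $G(t,x,y)=\sum_{j\in\mathbb{Z}}e^{-4\pi^2j^2t}e_j(x)\bar e_j(y)$ and $G^n(t,x,y)=\sum_{|j|\le N}e^{\lambda^n_jt}e^n_j(x)\bar e_j(\kappa_n(y))$, with $N:=[n/2]$ (the case of even $n$ only replaces the symmetric index range by $\{-N+1,\dots,N\}$ and is handled verbatim), and to decompose
\begin{equation*}
G-G^n=\underbrace{(G-G_N)}_{\mathrm{(I)}}+\underbrace{(G_N-H_N)}_{\mathrm{(II)}}+\underbrace{(H_N-G^n)}_{\mathrm{(III)}},
\end{equation*}
where $G_N:=\sum_{|j|\le N}e^{-4\pi^2j^2t}e_j(x)\bar e_j(y)$ truncates $G$ and $H_N:=\sum_{|j|\le N}e^{\lambda^n_jt}e_j(x)\bar e_j(y)$ replaces the continuous eigenvalues $-4\pi^2 j^2$ by $\lambda^n_j$. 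Since $\{e_j\}$ is orthonormal on $[0,1]$, Parseval gives $\int_0^1|G-G_N|^2\,dy=\sum_{|j|>N}e^{-8\pi^2j^2t}$ and $\int_0^1|G_N-H_N|^2\,dy=\sum_{|j|\le N}\bigl|e^{-4\pi^2j^2t}-e^{\lambda^n_jt}\bigr|^2$. For (III) I would use the identity $e_j(x)\bar e_j(y)-e^n_j(x)\bar e_j(\kappa_n(y))=e_j(x)\bar e_j(\kappa_n(y))\bigl(e^{-2\pi\mathbf{i}j(y-\kappa_n(y))}-1\bigr)+\bigl(e_j(x)-e^n_j(x)\bigr)\bar e_j(\kappa_n(y))$ to write $\mathrm{(III)}=\mathrm{(III_a)}+\mathrm{(III_b)}$. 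The step functions $\{\bar e_j(\kappa_n(\cdot))\}_{|j|\le N}$ are orthonormal on $[0,1]$ (they are the discrete Fourier basis), so $\int_0^1|\mathrm{(III_b)}|^2\,dy=\sum_{|j|\le N}e^{2\lambda^n_jt}\bigl|e_j(x)-e^n_j(x)\bigr|^2$; for $\mathrm{(III_a)}$ I would perform the $dy$-integral one grid cell $[l/n,(l+1)/n)$ at a time (on which $\kappa_n(y)\equiv l/n$), expand the square, and sum over $l$ using $\sum_{l=0}^{n-1}e^{-2\pi\mathbf{i}(j-k)l/n}=n\delta_{jk}$ (legitimate since $|j-k|\le 2N\le n$), which yields the diagonal form $\int_0^1|\mathrm{(III_a)}|^2\,dy=n\sum_{|j|\le N}e^{2\lambda^n_jt}\int_0^{1/n}\bigl|e^{-2\pi\mathbf{i}jr}-1\bigr|^2\,dr$.

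The elementary inputs are: (a) $\lambda^n_j=-4\pi^2j^2c^n_j$ with $c^n_j=\sin^2(j\pi/n)/(j\pi/n)^2\in[4/\pi^2,1]$ (as in Lemma \ref{lemma 2.1}), hence $e^{\lambda^n_jt}\le e^{-16j^2t}$, and, from $\sin x\ge x-x^3/6$, $0\le1-c^n_j\le(j\pi/n)^2/3$, so $\bigl|e^{-4\pi^2j^2t}-e^{\lambda^n_jt}\bigr|=e^{\lambda^n_jt}\bigl(1-e^{-4\pi^2j^2(1-c^n_j)t}\bigr)\le C\,\tfrac{j^4t}{n^2}\,e^{-16j^2t}$; (b) the piecewise-linear interpolation bound $\bigl|e_j(x)-e^n_j(x)\bigr|\le C\,\tfrac{j^2}{n^2}$ together with $|e^n_j(x)|\le1$; (c) $\bigl|e^{-2\pi\mathbf{i}jr}-1\bigr|\le\min(2,2\pi|j|r)$, hence $\int_0^{1/n}\bigl|e^{-2\pi\mathbf{i}jr}-1\bigr|^2\,dr\le C\,\tfrac{j^2}{n^3}$. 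Substituting (a)--(c), and noting that the $j=0$ modes contribute nothing to (II), $\mathrm{(III_a)}$, $\mathrm{(III_b)}$, each $\int_0^1|\cdot|^2\,dy$ is controlled by a series $\sum_{j\ge1}(\text{a monomial in }j)\,e^{-cj^2t}$: explicitly, $2\sum_{j>N}e^{-8\pi^2j^2t}$ for (I), $\tfrac{Ct^2}{n^4}\sum_{j\ge1}j^8e^{-32j^2t}$ for (II), $\tfrac{C}{n^2}\sum_{j\ge1}j^2e^{-32j^2t}$ for $\mathrm{(III_a)}$, and $\tfrac{C}{n^4}\sum_{j\ge1}j^4e^{-32j^2t}$ for $\mathrm{(III_b)}$.

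For part (i) I would integrate these bounds over $t\in(0,\infty)$ using $\int_0^\infty t^ke^{-cj^2t}\,dt=k!\,(cj^2)^{-(k+1)}$; the powers of $j$ telescope and leave $\sum_{j>N}j^{-2}\le C/N$ for (I), $n^{-4}\sum_{1\le j\le N}j^2\le CN^3n^{-4}$ for (II) and $\mathrm{(III_b)}$, and $n^{-2}\sum_{1\le j\le N}1=Nn^{-2}$ for $\mathrm{(III_a)}$. As $n/3\le N=[n/2]\le n/2$ for $n\ge3$, each contribution is $\le C/n$, and summing the four via $\bigl(\sum_{i=1}^4a_i\bigr)^2\le4\sum_ia_i^2$ gives (i). For part (ii) I would instead use $e^{-x}\le C_\alpha x^{-\alpha}$ (and $e^{-x}\le C_{2+\alpha}x^{-(2+\alpha)}$ on the $t^2$-weighted series (II)), which extracts the factor $t^{-\alpha}$ and turns the $j$-sums into $t^{-\alpha}\sum_{j>N}j^{-2\alpha}\le Ct^{-\alpha}N^{1-2\alpha}$ for (I) --- convergent precisely because $\alpha>\tfrac12$ --- and into $t^{-\alpha}N^{s}$ with $s=5-2\alpha$ for (II), $\mathrm{(III_b)}$ and $s=3-2\alpha$ for $\mathrm{(III_a)}$; multiplied by the prefactors $n^{-4}$, $n^{-2}$ and using $N\le n$, all four collapse to $\le C_\alpha n^{1-2\alpha}t^{-\alpha}$ (here $\alpha>\tfrac12$ and $\alpha<1$ enter only through the convergence of $\sum_{j>N}j^{-2\alpha}$ and the comparison $\sum_{j\le N}j^{s}\le CN^{s+1}$). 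Adding the four pieces gives (ii).

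The heart of the matter is the eigenfunction-discretization term (III), and within it the cross-term $\mathrm{(III_a)}$: the continuous modes $\bar e_j(y)$ are not orthogonal in $L^2(0,1)$ to the step functions $\bar e_j(\kappa_n(\cdot))$, so Parseval cannot be applied directly. The rescue is to integrate over the grid cells one at a time, which restores an exact orthogonality via $\sum_{l=0}^{n-1}e^{-2\pi\mathbf{i}(j-k)l/n}=n\delta_{jk}$ and collapses $\mathrm{(III_a)}$ to a single diagonal series; the rest is bookkeeping --- checking that the $j^2$-decay carried by $\lambda^n_j$ (via $c^n_j\ge4/\pi^2$) is strong enough that every $j$-sum, weighted against the correct power of $n$, lands exactly on the rates $C/n$ and $C_\alpha n^{1-2\alpha}t^{-\alpha}$. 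I expect the truncation and eigenvalue pieces (I), (II) to be routine; essentially all the subtlety lies in the interaction between the spatial grid $\kappa_n$ and the Fourier modes in (III).
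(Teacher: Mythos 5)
Your proposal is correct and follows essentially the same route as the paper: the paper's proof decomposes $G-G^n$ into exactly your four pieces ($I^c_1+I^s_1$ is your truncation term (I), $I^c_3+I^s_3$ is your eigenvalue-replacement term (II), and $I^c_2+I^s_2$, $I^c_4+I^s_4$ are your $\mathrm{(III_a)}$, $\mathrm{(III_b)}$), and uses the same elementary inputs $c^n_j\in[4/\pi^2,1]$, $1-c^n_j\leq (j\pi/n)^2/3$, $1-e^{-z}\leq z$ and $e^{-z^2}\leq C(\alpha)z^{-\alpha}$, with the same final bookkeeping for both (i) and (ii). The only substantive differences are in execution: for the $y$-discretization cross term the paper avoids the non-orthogonality issue by the bound $\int_0^1|u(y)-u(\kappa_n(y))|^2\,dy\leq n^{-2}\int_0^1|u'(y)|^2\,dy$ applied to the truncated kernel (yielding $\tfrac{C}{n^2}\sum_r r^2e^{-8\pi^2r^2t}$), whereas you restore exact diagonality via the cell-by-cell discrete orthogonality $\sum_{l}e^{-2\pi\mathbf{i}(j-k)l/n}=n\delta_{jk}$ — both land on the same series; and for the $x$-interpolation term the paper uses the first-order Lipschitz bound $|\varphi_{c,r}(x)-\varphi_{c,r}(\kappa_n(x))|\leq 2\pi r/n$ where you use the second-order bound $Cj^2/n^2$ — again both suffice. (The paper also separately tracks the extra $j=n/2$ mode for even $n$, which your remark on the index range covers.)
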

Based on Lemma \ref{lemma3.3}, we can establish the convergence theorem of the spatially semi-discrete scheme. Here, we omit its proof since it can be proved in a similar way as in \cite{Gy98}.
\begin{thm}
For every $0<\alpha<\frac{1}{2},\;p\ge 1$ and for every $t>0$, there is a constant $C:=C(\alpha,p,t)>0$ such that  
\begin{align*}
\sup_{x\in[0,1]}\left\|u^n(t,x)-u(t,x)\right\|_{2p}\leq Cn^{-\alpha}.
\end{align*}
\end{thm}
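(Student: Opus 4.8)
The plan is to follow the lattice-approximation method of Gy\"ongy \cite{Gy98}: I would subtract the mild representations \eqref{mild periodic} and \eqref{mild fdm}, split the difference into a deterministic (initial-data) part and a stochastic part, estimate each contribution with the Green-function error bounds of Lemma \ref{lemma3.3}, and close the estimate by a weakly singular Gr\"onwall inequality. Write $u^n(t,x)-u(t,x)=D_n(t,x)+S_n(t,x)$, where $D_n(t,x)=\int_0^1 G^n(t,x,y)u_0(\kappa_n(y))\,dy-\int_0^1 G(t,x,y)u_0(y)\,dy$ and $S_n(t,x)=\lambda\int_0^t\int_0^1[G^n(t-s,x,y)\sigma(u^n(s,\kappa_n(y)))-G(t-s,x,y)\sigma(u(s,y))]\,dW(s,y)$. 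Since every sampled point $\kappa_n(y)$ lies in $[0,1]$, it is enough to control $H_n(t):=\sup_{x\in[0,1]}\|u^n(t,x)-u(t,x)\|_{2p}^2$, and the target is an inequality of the form $H_n(t)\le C(t)n^{-2\alpha}+C\int_0^t((t-s)^{-1/2}+1)H_n(s)\,ds$ on each finite time interval.

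For $D_n$, I would insert $\int_0^1 G(t,x,y)u_0(\kappa_n(y))\,dy$: by the Cauchy--Schwarz inequality and Lemma \ref{lemma3.3}(ii) (applied with some exponent $\alpha'\in(1/2,1)$), the term carrying $G^n-G$ is at most $\|u_0\|_\infty(\int_0^1|G^n(t,x,y)-G(t,x,y)|^2\,dy)^{1/2}\le C(\alpha')\|u_0\|_\infty n^{1/2-\alpha'}t^{-\alpha'/2}=C(t)n^{-\alpha}$ with $\alpha:=\alpha'-1/2\in(0,1/2)$, while the term $\int_0^1 G(t,x,y)(u_0(\kappa_n(y))-u_0(y))\,dy$ is handled by $\int_0^1 G(t,x,y)\,dy=1$ (Lemma \ref{lemma2.5}(i)) together with the modulus of continuity of $u_0$. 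For $S_n$ I would write $S_n=S_n^{(1)}+S_n^{(2)}+S_n^{(3)}$ by inserting $G(t-s,x,y)\sigma(u^n(s,\kappa_n(y)))$ and $G(t-s,x,y)\sigma(u(s,\kappa_n(y)))$, so that $S_n^{(1)}$ carries $G^n-G$, $S_n^{(2)}$ the increment $\sigma(u^n)-\sigma(u)$ at the common point $\kappa_n(y)$, and $S_n^{(3)}$ the increment $\sigma(u(\kappa_n(y)))-\sigma(u(y))$, and estimate each through the Burkholder--Davis--Gundy and Minkowski inequalities in the form $\|\int_0^t\int_0^1\Phi\,dW\|_{2p}^2\le C_p\int_0^t\int_0^1\|\Phi(s,y)\|_{2p}^2\,ds\,dy$.

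For $S_n^{(1)}$, the linear growth of $\sigma$ and the a priori moment bound of Proposition \ref{proposition 2.2} bound $\|\sigma(u^n(s,\kappa_n(y)))\|_{2p}$ uniformly on $[0,t]$, and then the space-time $L^2$ bound Lemma \ref{lemma3.3}(i) gives $\|S_n^{(1)}(t,x)\|_{2p}^2\le C(t,p)\lambda^2\int_0^\infty\int_0^1|G^n-G|^2\le C(t,p)n^{-1}$. For $S_n^{(2)}$, the Lipschitz property of $\sigma$, the bound $\|u^n(s,\kappa_n(y))-u(s,\kappa_n(y))\|_{2p}^2\le H_n(s)$ and Lemma \ref{lemma2.5}(iii) give $\|S_n^{(2)}(t,x)\|_{2p}^2\le C_pL_\sigma^2\lambda^2\int_0^t((t-s)^{-1/2}+1)H_n(s)\,ds$. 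For $S_n^{(3)}$, one needs the spatial $L^{2p}(\Omega)$-modulus of the exact solution: its stochastic part is $1/2$-H\"older, which follows from $\int_0^\infty\int_0^1|G(r,z,w)-G(r,z',w)|^2\,dw\,dr\le C|z-z'|$ (provable from the spectral decomposition \eqref{greenp2}), while its deterministic part has spatial increment bounded by the standard heat-kernel modulus $\|u_0\|_\infty\min(1,n^{-1}s^{-1/2})$; integrating these against $G^2$ via Lemma \ref{lemma2.5}(iii) and splitting the $s$-integral at $s=n^{-2}$ yields $\|S_n^{(3)}(t,x)\|_{2p}^2\le C(t,p)n^{-1}\log n$. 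Collecting all terms gives, on any fixed interval $[0,T]$, $H_n(t)\le C(T,p,\alpha)n^{-2\alpha}+C_pL_\sigma^2\lambda^2\int_0^t((t-s)^{-1/2}+1)H_n(s)\,ds$, where $H_n$ is finite and bounded on $[0,T]$ by the a priori moment bounds for $u^n$ and $u$; the weakly singular Gr\"onwall inequality (a Volterra iteration with the kernel $(t-s)^{-1/2}$, as in \cite{Gy98}) then yields $H_n(t)\le C(T,p,\alpha)n^{-2\alpha}$ on $[0,T]$, hence $\sup_{x\in[0,1]}\|u^n(t,x)-u(t,x)\|_{2p}\le C(T,p,\alpha)n^{-\alpha}$.

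The routine inputs are the Green-function estimates (Lemmas \ref{lemma2.5} and \ref{lemma3.3}) and the a priori moment bounds; the part I expect to require the most care --- and the reason this is not entirely immediate from \cite{Gy98} --- is ensuring that the rate $n^{-\alpha}$ is not degraded when the loop is closed. Concretely, one must keep track of the various time-singular weights (the $t^{-\alpha'/2}$ produced by Lemma \ref{lemma3.3}(ii) near $t=0$, the factor $(t-s)^{-1/2}$ coming from the $L^2$ estimates of $G$ and $G^n$, and the heat-kernel spatial modulus $\min(1,\cdot)$ near $s=0$), check their integrability against each other, and verify that the singular Gr\"onwall step preserves the power of $n$; moreover, the explicit exponent $n^{-\alpha}$ ultimately rests on a H\"older-type control of $u_0$ entering through the term $\int_0^1 G(t,x,y)(u_0(\kappa_n(y))-u_0(y))\,dy$, which converges to $0$ without a quantitative rate for a merely bounded measurable $u_0$.
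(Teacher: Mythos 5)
Your proposal follows essentially the same route the paper intends: the paper omits the proof entirely, saying only that it ``can be proved in a similar way as in \cite{Gy98}'', and your decomposition into a deterministic and a stochastic part, the use of Lemma \ref{lemma3.3} for the Green-function errors, the Burkholder--Davis--Gundy/Minkowski estimates, and the weakly singular Gr\"onwall closure are exactly that argument assembled from the lemmas the paper prepares for it. The caveat you raise yourself --- that the term $\int_0^1 G(t,x,y)\left(u_0(\kappa_n(y))-u_0(y)\right)dy$ cannot yield the rate $n^{-\alpha}$ for a merely bounded measurable $u_0$ --- is a fair observation, but it concerns the hypotheses of the theorem as stated (and hence the paper's own claim) rather than a defect of your argument.
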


\section{Intrinsic property-preserving full discretization}\label{sec4}
In this section, we discretize \eqref{fdm} in temporal direction by the $\theta$-scheme to get a fully discrete scheme, whose solution can be written into a compact integral form by finding explicit expressions of the fully discrete Green functions. The fully discrete scheme is convergent to the exact solution in the mean square sense with order $\frac{1}{2}$ in the spatial direction and order $\frac{1}{4}$ in the temporal direction. Based on the technical estimates of the fully discrete Green functions, the numerical solution of this full discretization is proven to be weakly intermittent and to preserve the sharp exponential order of the second moment of the exact solution.
\subsection{Fully discrete scheme}\label{sec4.1}
We fix the uniform time step size $0<\tau<1$. In the sequel, we always assume $n\ge 3$. By using the $\theta$-scheme to discretize \eqref{fdm}, we obtain the following fully discrete scheme:
\begin{align}\label{fullscheme}
\begin{cases}
u^{n,\tau}(t_{i+1},x_j)=u^{n,\tau}(t_i,x_j)+(1-\theta)\tau \Delta_nu^{n,\tau}(t_i,\cdot)(x_j)+\theta \tau \Delta_nu^{n,\tau}(t_{i+1},\cdot)(x_j) \vspace{1 ex}\\
\qquad \qquad \qquad \quad+\lambda\tau \sigma\left(u^{n,\tau}(t_i,x_j)\right)\square_{n,\tau}W(t_i,x_j), \vspace{1 ex}\\
u^{n,\tau}(t_i,0)=u^{n,\tau}(t_i,1),\quad u^{n,\tau}\left(t_i,\frac{1}{n}\right)=u^{n,\tau}\left(t_i,\frac{n-1}{n}\right),\quad i=0,1,\ldots, \vspace{1 ex}\\
u^{n,\tau}(0,x_j)=u_0(x_j),\quad j=0,1,\ldots, n-1,
\end{cases}
\end{align} 
where $u^{n,\tau}$ is an approximation of $u^n$, $ t_i:=i\tau,x_j:=\frac{j}{n}$, and
\begin{equation*}
\begin{aligned}
&\Delta_nu^{n,\tau}(t_i,\cdot)(x_j):=n^2\left(u^{n,\tau}(t_i,x_{j+1})-2u^{n,\tau}(t_i,x_j)+u^{n,\tau}(t_i,x_{j-1})\right),\vspace{1 ex}\\
&\square_{n,\tau}W(t_i,x_j):=n\tau^{-1}\left(W(t_{i+1},x_{j+1})-W(t_i,x_{j+1})-W(t_{i+1},x_j)+W(t_i,x_j)\right).
\end{aligned}
\end{equation*}
By the linear interpolation with respect to the space variable, i.e., for $i=0,1,\ldots,$
\begin{equation*}
\begin{aligned}
u^{n,\tau}(t_i,x):=u^{n,\tau}(t_i,\kappa_n(x))+n(x-\kappa_n(x))\left[u^{n,\tau}\Big(t_i,\kappa_n(x)+\frac{1}{n}\Big)-u^{n,\tau}(t_i,\kappa_n(x))\right],
\end{aligned}
\end{equation*}
the mild form of $u^{n,\tau}$ is given by:
\begin{align}\label{mild full}
u^{n,\tau}(t,x)=&\;\int_{0}^{1}G^{n,\tau}_1(t,x,y)u_0(\kappa_n(y))\,dy\nonumber\\
&+\lambda\int_{0}^{t}\int_{0}^{1}G^{n,\tau}_2(t-\kappa_{\tau}(s)-\tau,x,y)\sigma \left(u^{n,\tau}(\kappa_{\tau}(s),\kappa_n(y))\right)\,dW(s,y),
\end{align}
almost surely for every $t=i\tau, x\in [0,1]$, where the fully discrete Green functions
\begin{align*}
&G^{n,\tau}_1(t,x,y):=\sum_{l=0}^{n-1}\left(R_{1,l}R_{2,l}\right)^{\left[\frac{t}{\tau}\right]}e^n_l(x)\bar{e}_l(\kappa_n(y)),\\
&G^{n,\tau}_2(t,x,y):=\sum_{l=0}^{n-1}\left(R_{1,l}R_{2,l}\right)^{\left[\frac{t}{\tau}\right]}R_{1,l}e^n_l(x)\bar{e}_l(\kappa_n(y))
\end{align*}
with $R_{1,l}:=(1-\theta \tau\lambda^n_l)^{-1}, R_{2,l}:=1+(1-\theta)\tau \lambda^n_l$, $\kappa_{\tau}(s):=\left[\frac{s}{\tau}\right]\tau$. 
For the derivation of \eqref{mild full}, we refer to Appendix \ref{sec5.4}.

Moreover, $G^{n,\tau}_i,i=1,2$ can be rewritten as
\begin{align*}
G^{n,\tau}_1(t,x,y)=
\left\{\begin{array}{ll}
\sum_{l=-\left[\frac{n}{2}\right]}^{\left[\frac{n}{2}\right]}\left(R_{1,l}R_{2,l}\right)^{\left[\frac{t}{\tau}\right]}e^n_l(x)\bar{e}_l(\kappa_n(y)),\quad& n\text{ is odd},\\
\sum_{l=-\frac{n}{2}+1}^{\frac{n}{2}}\left(R_{1,l}R_{2,l}\right)^{\left[\frac{t}{\tau}\right]}e^n_l(x)\bar{e}_l(\kappa_n(y)),\quad &n\text{ is even,}
\end{array}
\right.
\end{align*}
\begin{align*}
G^{n,\tau}_2(t,x,y)=
\left\{\begin{array}{ll}
\sum_{l=-\left[\frac{n}{2}\right]}^{\left[\frac{n}{2}\right]}\left(R_{1,l}R_{2,l}\right)^{\left[\frac{t}{\tau}\right]}R_{1,l}e^n_l(x)\bar{e}_l(\kappa_n(y)),\quad& n\text{ is odd},\\
\sum_{l=-\frac{n}{2}+1}^{\frac{n}{2}}\left(R_{1,l}R_{2,l}\right)^{\left[\frac{t}{\tau}\right]}R_{1,l}e^n_l(x)\bar{e}_l(\kappa_n(y)),\quad &n\text{ is even.}
\end{array}
\right.
\end{align*}
By expanding the real and imaginary parts, it is not difficult to observe that $G^{n,\tau}_i,i=1,2$ are real functions (see Appendix \ref{aplemma4.8} $(\romannumeral1)$).

Below, we give the definitions of the upper $p$th moment Lyapunov exponent (see \cite{ FD12, talay91}) and weak intermittency for the fully discrete numerical solution.
\begin{dfn}
(\romannumeral1) For the numerical solution $u^{n,\tau}$ of a fully discrete scheme, its upper $p$th moment Lyapunov exponent at $x\in[0,1]$ is defined by
\begin{align}
\bar{\gamma}^{n,\tau}_p(x):=\limsup_{m\rightarrow \infty}\frac{1}{m\tau}\log\mathbb{E}\left(\left|u^{n,\tau}(m\tau,x)\right|^p\right),
\end{align}
for $p\in(0,\infty)$.\vspace{1 ex}\\
(\romannumeral2) The numerical solution $u^{n,\tau}$ is called weakly intermittent if for all $x\in[0,1]$, $\bar{\gamma}^{n,\tau}_2(x)>0$ and $\bar{\gamma}^{n,\tau}_p(x)<\infty$ for $p> 2$.
\end{dfn}

Before we investigate the weak intermittency of the fully discrete scheme, we first present some conditions on step sizes to ensure the well-posedness of the fully discrete Green functions. That is to say, the step sizes are chosen to such that $|R_{1,j}R_{2,j}|<1,$ $j=1,2,\ldots,\left[\frac{n}{2}\right].$ Note that $R_{1,j}R_{2,j}<1,$ so what we need is to find conditions such that
\begin{align*}
R_{1,j}R_{2,j}=\frac{1+(1-\theta)\tau\lambda^n_j}{1-\theta\tau\lambda^n_j}\ge-1+\epsilon, \quad j=1,2,\ldots,\left[\frac{n}{2}\right]
\end{align*}
 for some fixed $\epsilon>0.$
It is equivalent to
\begin{align}\label{sec4eq2}
-4(1-2\theta+\epsilon\theta)n^2\tau\sin^2\frac{j\pi}{n}\ge-2+\epsilon.
\end{align}
Hence, we divide $\theta$ into the following three cases.

\textit{Case 1: $\theta \in[0,\frac{1}{2})$}. For such $\theta,$ we have $1-2\theta+\epsilon\theta>0,$ hence, \eqref{sec4eq2} is equivalent to
\begin{align}\label{sec4eq1}
n^2\tau\sin^2\frac{j\pi}{n}\leq\frac{2-\epsilon}{4(1-2\theta+\epsilon\theta)}.
\end{align}
Suppose $n^2\tau\leq r\leq \frac{2-\epsilon}{4(1-2\theta+\epsilon\theta)},$ then $\epsilon\leq 2-\frac{4r}{1+4\theta r},$ and \eqref{sec4eq1} holds for $j=1,2,\ldots,\left[\frac{n}{2}\right].$ Moreover, $2-\frac{4r}{1+4\theta r}>0$ implies $r<\frac{1}{2-4\theta}.$

\textit{Case 2: $\theta =\frac{1}{2}$}. We suppose $n^2\tau\leq \frac{2-\epsilon}{4(1-2\theta+\epsilon\theta)}=\frac{1}{\epsilon}-\frac{1}{2},$ then \eqref{sec4eq2} holds with $\theta=\frac{1}{2}$.

\textit{Case 3: $\theta \in (\frac{1}{2},1]$}. For such $\theta,$ we can choose $\epsilon>0$ small enough, e,g., $\epsilon:=\min\left\{-\frac{1-2\theta}{2\theta},\frac{1}{2}\right\},$  such that \eqref{sec4eq2} holds for all $n\ge 3,\;0<\tau<1,$ $j=1,2,\ldots,\left[\frac{n}{2}\right].$


To sum up, we make the following assumptions on the spatial step size $\frac{1}{n}$ and the temporal step size $\tau$ when $\theta$ takes different values.
\begin{assumption}\label{Assumption3}
(\romannumeral1) For $0\leq \theta<\frac{1}{2},$ suppose 
$n^2\tau\leq r<\frac{1}{2-4\theta}$ with some constant $r>0$.\\
(\romannumeral2) For $\theta=\frac{1}{2},$ suppose 
$n^2\tau\leq \frac{1}{\epsilon}-\frac{1}{2}$ with some $\epsilon\in(0,\frac{1}{2}).$\\
(\romannumeral3) For $\frac{1}{2}<\theta\leq 1$, there is no coupled requirement for $n,\tau.$
\end{assumption}
Below, we give the main result of this subsection.
\begin{thm}\label{thm4.2}
Under Assumptions \ref{Assumption 2} and \ref{Assumption3}, the solution of the fully discrete scheme is weakly intermittent.
\end{thm}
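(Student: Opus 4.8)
The plan is to carry over, to the fully discrete setting, the three-step strategy used for the spatial semi-discretization in Section~\ref{sec3}. First I would establish the quantitative properties of the two fully discrete Green functions $G^{n,\tau}_1,G^{n,\tau}_2$ that play the role of Lemma~\ref{lemma 2.1}; then, from an a priori $L^p(\Omega)$-estimate of the mild form \eqref{mild full}, deduce that $\sup_{x\in[0,1]}\bar\gamma^{n,\tau}_p(x)<\infty$ for every $p>2$ (with the order $CL_\sigma^4\lambda^4p^3$ of Proposition~\ref{proposition 2.2} once $\tau$ is small); and finally, combining a lower estimate of the second moment with a discrete reverse Gr{\"o}nwall inequality, obtain $\inf_{x\in[0,1]}\bar\gamma^{n,\tau}_2(x)\ge\tau^{-1}\log(1+\lambda^2J_0^2\tau)>0$. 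Weak intermittency is then immediate from the definition.

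Since, unlike $G$ and (eventually) $G^n$, the fully discrete kernels need not be non-negative, all of the Green-function estimates have to be read off the spectral series. Using the discrete orthogonality $\frac1n\sum_{k=0}^{n-1}\bar e_j(\frac kn)e_l(\frac kn)=\delta_{jl}$ and the fact that $\sum_{k=0}^{n-1}\cos(2\pi j\frac kn)=0$ for $j\notin n\mathbb{Z}$, I would prove: (a) $\int_0^1 G^{n,\tau}_1(t,x,y)\,dy=1$; (b) the Parseval-type identity $\int_0^1(G^{n,\tau}_i(t,x,y))^2\,dy=\sum_l(R_{1,l}R_{2,l})^{2[t/\tau]}R_{1,l}^{2(i-1)}|e^n_l(x)|^2$, which is $\ge1$ for all $t\ge0$ because the $l=0$ mode has $R_{1,0}=R_{2,0}=1$ and $e^n_0\equiv1$ while every other mode contributes non-negatively; (c) an $n$-uniform bound $\int_0^1(G^{n,\tau}_2(t,x,y))^2\,dy\le 1+C/\sqrt t$ for $t\ge\tau$, obtained from $|e^n_l(x)|\le1$, $0<R_{1,l}\le1$ and a discrete smoothing estimate $|R_{1,l}R_{2,l}|^{2[t/\tau]}R_{1,l}^2\le C(1+|\lambda^n_l|t)^{-1}$ valid uniformly under Assumption~\ref{Assumption3}, so that the tail behaves like $\int_0^\infty(1+cz^2t)^{-1}\,dz\sim t^{-1/2}$, exactly as in Lemma~\ref{lemma 2.1}(iii); and (d) eventual positivity, namely that Assumption~\ref{Assumption3} forces $|R_{1,l}R_{2,l}|<1$ strictly for $l=1,\dots,[n/2]$, hence the non-constant modes of $G^{n,\tau}_1(m\tau,\frac qn,\frac ln)$ vanish uniformly as $m\to\infty$ and there is $N=N(n,\tau)$ with $G^{n,\tau}_1(m\tau,x,y)\ge\frac12$ for all $m>N$, $x,y\in[0,1]$ after linear interpolation.

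For the upper bound, note that the recursion \eqref{fullscheme} is unconditionally solvable (the operator $I-\theta\tau\Delta_n$ is invertible), so $u^{n,\tau}$ exists and is unique and only a moment bound is needed. Working with the discrete weighted norm $\mathcal N_{\beta,p}(u):=\sup_{m\ge0}\sup_{x\in[0,1]}e^{-\beta t_m}\|u(t_m,x)\|_p$, I would bound the deterministic part of \eqref{mild full} by $\sup_{x\in[0,1]}|u_0(x)|\,\bigl(\int_0^1(G^{n,\tau}_1)^2\,dy\bigr)^{1/2}$ and the stochastic part by Burkholder--Davis--Gundy, Minkowski and the linear growth of $\sigma$; since $\kappa_\tau(s)=t_i$ and $t_m-\kappa_\tau(s)-\tau=t_{m-1-i}$ on $[t_i,t_{i+1})$, this yields the discrete convolution $\tau\sum_{i=0}^{m-1}\bigl(\int_0^1(G^{n,\tau}_2(t_{m-1-i},x,y))^2\,dy\bigr)(1+\sup_y\|u^{n,\tau}(t_i,y)\|_p^2)$. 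Multiplying by $e^{-2\beta t_m}$ and using $t_m=t_i+t_{m-1-i}+\tau$ together with (c) gives a contraction factor $CL_\sigma^2p\lambda^2\,e^{-2\beta\tau}\sum_{j\ge0}\tau e^{-2\beta t_j}\int_0^1(G^{n,\tau}_2(t_j,x,y))^2\,dy$, in which the prefactor $e^{-2\beta\tau}$ — a consequence of the one-step time lag of the scheme — drives the whole expression to $0$ as $\beta\to\infty$; choosing $\beta$ accordingly closes the estimate, giving $\mathbb{E}|u^{n,\tau}(t_m,x)|^p\le C_1^p e^{\beta t_m}$ and hence $\bar\gamma^{n,\tau}_p(x)\le\beta<\infty$ (with $\beta=O(L_\sigma^4p^2\lambda^4)$ uniformly in $n$ once $\tau$ is small, as in Propositions~\ref{proposition 2.2} and~\ref{thm3.3}). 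For the lower bound, take the second moment of \eqref{mild full} at $t=t_m$ with $m>N(n,\tau)$; by Walsh's isometry the martingale part equals $\lambda^2\sum_{i=0}^{m-1}\tau\int_0^1(G^{n,\tau}_2(t_{m-1-i},x,y))^2\mathbb{E}|\sigma(u^{n,\tau}(t_i,\kappa_n(y)))|^2\,dy$, and using $|\sigma(v)|\ge J_0|v|$ (Assumption~\ref{Assumption 2}), $\int_0^1(G^{n,\tau}_2(t_j,x,y))^2\,dy\ge1$, and — for the deterministic part — the positivity $G^{n,\tau}_1(t_m,\cdot,\cdot)\ge\frac12>0$ with $\int_0^1G^{n,\tau}_1(t_m,x,y)\,dy=1$ and $u_0\ge I_0$, one gets after taking $\inf_{x\in[0,1]}$
\begin{align*}
\inf_{x\in[0,1]}\mathbb{E}|u^{n,\tau}(t_m,x)|^2\ \ge\ I_0^2+\lambda^2J_0^2\,\tau\sum_{i=0}^{m-1}\inf_{y\in[0,1]}\mathbb{E}|u^{n,\tau}(t_i,y)|^2,\qquad m>N.
\end{align*}
A discrete reverse Gr{\"o}nwall inequality — the analog of Lemma~\ref{lemma 2.4}: if $a_m\ge\tilde\alpha+\beta\tau\sum_{i=N}^{m-1}a_i$ for $m\ge N$ then $a_m\ge(1+\beta\tau)^{m-N}\tilde\alpha$, proved by comparison with the equality — applied with $\tilde\alpha=I_0^2+\lambda^2J_0^2\tau\sum_{i=0}^{N-1}\inf_y\mathbb{E}|u^{n,\tau}(t_i,y)|^2\ge I_0^2$ and $\beta=\lambda^2J_0^2$ then gives $\inf_x\mathbb{E}|u^{n,\tau}(t_m,x)|^2\ge I_0^2(1+\lambda^2J_0^2\tau)^{m-N}$, so $\bar\gamma^{n,\tau}_2(x)\ge\tau^{-1}\log(1+\lambda^2J_0^2\tau)>0$ for every $x\in[0,1]$; with the upper bound this proves Theorem~\ref{thm4.2}.

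The main obstacle is the step concerning the fully discrete Green functions: proving the $n$-uniform $L^2$-bound (c) and the large-time positivity (d) requires a careful case analysis of $R_{1,l}R_{2,l}=\frac{1-(1-\theta)\tau|\lambda^n_l|}{1+\theta\tau|\lambda^n_l|}$ over the three ranges $\theta\in[0,\frac12)$, $\theta=\frac12$, $\theta\in(\frac12,1]$, and it is exactly the CFL-type couplings imposed in Assumption~\ref{Assumption3} that guarantee $|R_{1,l}R_{2,l}|<1$ strictly for every non-constant mode — the property on which both (c) and (d) hinge. Keeping the $\tau$-dependence explicit throughout, so that the lower exponent $\tau^{-1}\log(1+\lambda^2J_0^2\tau)$ stays positive (and converges to the semi-discrete value $\lambda^2J_0^2$ as $\tau\to0$) while the upper exponent retains the $\lambda^4$-order, is the delicate bookkeeping.
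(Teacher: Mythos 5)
Your proposal is correct and follows essentially the same route as the paper: the four Green-function properties (a)--(d) are exactly Lemma \ref{lemma4.1} (\romannumeral1)--(\romannumeral4) (including the $A_1/A_2$ case analysis over $\theta$ hidden in your step (c)), the weighted-norm fixed-point argument for the upper bound is Propositions \ref{prop 3.2}--\ref{thm4.4}, and the discrete reverse Gr{\"o}nwall lower bound yielding $\bar\gamma^{n,\tau}_2(x)\ge\tau^{-1}\log(1+\lambda^2J_0^2\tau)$ is Lemma \ref{lemma 3.5} together with Proposition \ref{theorem 3.6}.
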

The proof of Theorem \ref{thm4.2} follows from the intermittent upper bound (Sections \ref{sec4.2}) and the intermittent lower bound (Section \ref{sec4.3}). Before that, we prove some properties of the fully discrete Green functions, which play a key role in the estimates of the intermittent upper and lower bounds.
In the following, we define $R_{3,j}:=(R_{1,j}R_{2,j})^{-1}-1=-\frac{\lambda^n_j\tau}{1+(1-\theta)\tau\lambda^n_j}.$
\begin{lemma}\label{lemma4.1}
For $n\ge 3,\;0<\tau<1,$ $G^{n,\tau}_i(t,x,y),i=1,2$ have the following properties:\vspace{1 ex}\\
(\romannumeral1) $\int_{0}^{1}G^{n,\tau}_1(t,x,y)\,dy=1$ for $t>0,\;x\in[0,1]$.\vspace{1 ex}\\
(\romannumeral2) For $t>0,\;x\in[0,1],$ the following equalities hold:
\begin{align*}
\int_{0}^{1}\left(G^{n,\tau}_2(t,x,y)\right)^2\,dy&=\sum_{j=0}^{n-1}(R_{1,j}R_{2,j})^{2\left[\frac{t}{\tau}\right]}R^2_{1,j}\big|e^n_j(x)\big|^2\\
&=\left\{\begin{array}{ll}
\sum_{j=-\left[\frac{n}{2}\right]}^{\left[\frac{n}{2}\right]}(R_{1,j}R_{2,j})^{2\left[\frac{t}{\tau}\right]}R^2_{1,j}\big|e^n_j(x)\big|^2,&n\text{ is odd},\vspace{1 ex}\\
\sum_{j=-\frac{n}{2}+1}^{\frac{n}{2}}(R_{1,j}R_{2,j})^{2\left[\frac{t}{\tau}\right]}R^2_{1,j}\big|e^n_j(x)\big|^2,& n\text{ is even.}
\end{array}
\right.
\end{align*}
Moreover, we have
$\int_{0}^{1}\left(G^{n,\tau}_2(t,x,y)\right)^2\,dy\ge 1$.\vspace{1 ex}\\
(\romannumeral3) Under Assumption \ref{Assumption3}, $\int_{0}^{1}\left(G^{n,\tau}_2(t,x,y)\right)^2\,dy\leq 1+\frac{C}{\sqrt{\left[\frac{t}{\tau}\right]\tau+\tau}}$ with some constant $C:=C(\theta)>0$ for all $t>0,x\in[0,1]$.\vspace{1 ex}\\
(\romannumeral4) Under Assumption \ref{Assumption3}, for each fixed $n\ge 3$ and $0<\tau<1,$ there exists a number $t(n,\tau)>0$ depending on $n,\tau,$ such that $G^{n,\tau}_1(t,x,y)\ge \frac{1}{2}>0$ for all $t>t(n,\tau),\;x,y\in[0,1]$.
\end{lemma}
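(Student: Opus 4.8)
\emph{The plan} is to transcribe the proof of Lemma \ref{lemma 2.1} to the fully discrete setting, replacing the semi-discrete symbol $e^{\lambda^n_jt}$ by the $\theta$-scheme amplification factor $(R_{1,j}R_{2,j})^{[t/\tau]}$ and using the well-posedness/stability bounds on $R_{1,j},R_{2,j}$ supplied by Assumption \ref{Assumption3}. Parts (i) and (ii) are algebraic. For (i), I would integrate the defining series for $G^{n,\tau}_1$ term by term and use $\int_0^1\bar e_l(\kappa_n(y))\,dy=\frac1n\sum_{k=0}^{n-1}e^{-2\pi\mathbf{i}lk/n}$, which is $1$ when $n\mid l$ and $0$ otherwise; since $\lambda^n_0=0$ forces $R_{1,0}=R_{2,0}=1$ and $e^n_0\equiv1$, only the $l=0$ term survives and equals $1$. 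For (ii), I would write $(G^{n,\tau}_2)^2=G^{n,\tau}_2\overline{G^{n,\tau}_2}$ (legitimate since $G^{n,\tau}_2$ is real), integrate in $y$, and invoke the discrete orthogonality $\frac1n\sum_{k=0}^{n-1}e^{2\pi\mathbf{i}(l-j)k/n}=\mathbf 1_{\{j\equiv l\ (\mathrm{mod}\ n)\}}$ to annihilate the off-diagonal terms; the diagonal sum $\sum_{j=0}^{n-1}(R_{1,j}R_{2,j})^{2[t/\tau]}R_{1,j}^2|e^n_j(x)|^2$ is the claimed expression, and retaining only the $j=0$ summand (equal to $1$) while discarding the rest — each a product of the nonnegative quantities $(R_{1,j}R_{2,j})^{2[t/\tau]}$, $R_{1,j}^2$, $|e^n_j(x)|^2$ — gives the lower bound $\ge1$. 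The rewriting into the $n$ odd/even forms is the $n$-periodicity of $j\mapsto\lambda^n_j$ and $j\mapsto e_j$, exactly as for $G^n$.

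For (iii) I would first record the pointwise bound $|e^n_j(x)|\le1$ ($e^n_j(x)$ being a convex combination of two unit complex numbers), reducing (iii) to $\sum_{j=0}^{n-1}(R_{1,j}R_{2,j})^{2m}R_{1,j}^2\le1+C/\sqrt{(m+1)\tau}$ with $m:=[t/\tau]$; folding the sum onto $1\le j\le[n/2]$ by periodicity and symmetry and peeling off the $j=0$ term (equal to $1$) leaves $\sum_{j=1}^{[n/2]}(R_{1,j}R_{2,j})^{2m}R_{1,j}^2$ to be bounded by $C/\sqrt{(m+1)\tau}$. Setting $a_j:=-\tau\lambda^n_j=4n^2\tau\sin^2(j\pi/n)$ and using $\sin(j\pi/n)\ge2j/n$ on $1\le j\le[n/2]$ gives $a_j\ge16j^2\tau$, while $R_{1,j}^2=(1+\theta a_j)^{-2}$ and $R_{1,j}R_{2,j}=1-\frac{a_j}{1+\theta a_j}$. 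A short case analysis over Assumption \ref{Assumption3} then yields $|R_{1,j}R_{2,j}|\le e^{-c\,a_j}$ while $a_j$ stays in the admissible bounded range, and $|R_{1,j}R_{2,j}|\le\rho_0<1$ for the (possibly unbounded, when $\theta>\tfrac12$) large-$a_j$ modes. Combining these with the decay of $R_{1,j}^2$ when $\theta>0$, or with the CFL bound $[n/2]\le C/\sqrt\tau$ when $\theta=0$ (where $R_{1,j}\equiv1$), one estimates the small-$a_j$ part by $\sum_{j\ge1}e^{-c'mj^2\tau}\le\int_0^\infty e^{-c'mz^2\tau}\,dz=\tfrac12\sqrt{\pi/(c'm\tau)}$ and the large-$a_j$ part by $\rho_0^{2m}\cdot C/\sqrt\tau$; since $m\tau\ge\tfrac12(m+1)\tau$ for $m\ge1$ and $\rho_0^{2m}\sqrt{m+1}$ is bounded, both are $\le C/\sqrt{(m+1)\tau}$, while the case $m=0$ follows directly from $\sum_{j\ge1}R_{1,j}^2\le C/\sqrt\tau$.

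For (iv) I would argue verbatim as in Lemma \ref{lemma 2.1}(iv): at grid points $G^{n,\tau}_1(t,\tfrac qn,\tfrac ln)=1+\sum_{j=1}^{n-1}(R_{1,j}R_{2,j})^{[t/\tau]}e^{2\pi\mathbf{i}j(q-l)/n}$; since $\sin(j\pi/n)\ne0$ for $1\le j\le n-1$ we have $\lambda^n_j<0$ strictly, hence $R_{1,j}R_{2,j}<1$, and Assumption \ref{Assumption3} gives $R_{1,j}R_{2,j}>-1+\epsilon$, so $\rho:=\max_{1\le j\le n-1}|R_{1,j}R_{2,j}|<1$; therefore the oscillatory remainder is bounded by $(n-1)\rho^{[t/\tau]}$, which tends to $0$ as $t\to\infty$ uniformly in $q,l$, and choosing $t(n,\tau)$ with $(n-1)\rho^{[t/\tau]}\le\tfrac12$ for $t>t(n,\tau)$ gives $G^{n,\tau}_1\ge\tfrac12$ at grid points; linear interpolation in each variable preserves the inequality. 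The main obstacle is the quantitative estimate in (iii): one must extract a decay bound for $|R_{1,j}R_{2,j}|$ uniform over all admissible $(n,\tau)$ and over the three $\theta$-regimes of Assumption \ref{Assumption3}, carefully separating the degenerate explicit case $\theta=0$ (no gain from the $R_{1,j}^2$ factor, so the CFL condition must absorb the number of modes) from the implicit cases and from the regime $\theta\in(\tfrac12,1]$, where $a_j$ is unbounded and $R_{1,j}R_{2,j}$ need not converge to $0$.
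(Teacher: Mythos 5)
Your proposal is correct and follows essentially the same route as the paper: discrete orthogonality of $\{e_j\}$ for (i)--(ii), a split of the frequencies $1\le j\le[\tfrac n2]$ into the modes with $R_{1,j}R_{2,j}\ge\tfrac12$ (handled by a Gaussian-type bound $|R_{1,j}R_{2,j}|\le e^{-ca_j}$ with $a_j\ge16j^2\tau$ and an integral comparison) versus the modes with $|R_{1,j}R_{2,j}|\le1-\epsilon$ (handled by geometric decay plus the $R_{1,j}^2$ or CFL control of the mode count) for (iii), and uniform geometric decay of all nonzero modes at grid points followed by linear interpolation for (iv). The only difference is cosmetic: you replace the paper's two-step comparison of $(1+R_{3,j})^{-2([t/\tau]+1)}$ with $\exp\{-2R_{3,j}([t/\tau]+1)\}$ by a direct application of $1-z\le e^{-z}$ to $R_{1,j}R_{2,j}=1-a_j/(1+\theta a_j)$, which slightly streamlines the small-mode estimate.
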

\begin{proof}
The proofs of $(\romannumeral1)$ $(\romannumeral2)$ are similar to those in Lemma \ref{lemma 2.1}, so we only prove $(\romannumeral3)$ $(\romannumeral4)$.

$(\romannumeral3)$
We split the set $\left\{j:1,2,\ldots,\left[\frac{n}{2}\right]\right\}$ into two parts, i.e.,
\begin{align*}
\left\{j:1,2,\ldots,\left[\frac{n}{2}\right]\right\}&=\left\{j:R_{1,j}R_{2,j}\ge \frac{1}{2}\right\}\cup\left\{j:-1+\epsilon\leq R_{1,j}R_{2,j}<\frac{1}{2}\right\}\\
&=:A_1\cup A_2.
\end{align*}

In the sequel, we always use the fact that for $j\in A_1,$ $\frac{1}{2}<R_{2,j}<1$ and $-\lambda^n_j\tau\leq R_{3,j}\leq -2\lambda^n_j\tau$, and for $j\in A_2$, $\left|R_{1,j}R_{2,j}\right|\leq 1-\epsilon$. Moreover, we observe that $A_1\subset \left\{j:1\leq j\leq \frac{1}{4}\sqrt{\frac{1}{(2-\theta)\tau}}\right\}$ and $A_2\subset \left\{j:\frac{1}{2\pi}\sqrt{\frac{1}{(2-\theta)\tau}}<j\leq \left[\frac{n}{2}\right]\right\}.$

Hence,
\begin{align*}
&\int_{0}^{1}\left(G^{n,\tau}_2(t,x,y)\right)^2\,dy
\leq 1+4\sum_{j=1}^{\left[\frac{n}{2}\right]}\left(R_{1,j}R_{2,j}\right)^{2\left[\frac{t}{\tau}\right]}R^2_{1,j}\\
=&\;1+4\sum_{j\in A_1}\left(R_{1,j}R_{2,j}\right)^{2\left[\frac{t}{\tau}\right]}R^2_{1,j}+4\sum_{j\in A_2}\left(R_{1,j}R_{2,j}\right)^{2\left[\frac{t}{\tau}\right]}R^2_{1,j}\\
\leq&\;1+16\sum_{j\in A_1}\left(1+R_{3,j}\right)^{-2\left[\frac{t}{\tau}\right]-2}+4\sum_{j\in A_2}(1-\epsilon)^{2\left[\frac{t}{\tau}\right]}R^2_{1,j}
=:1+J_1+J_2.
\end{align*}

We split $J_1$ further as follows,
\begin{align*}
J_1=&16\sum_{j\in A_1}\left(1+R_{3,j}\right)^{-2\left[\frac{t}{\tau}\right]-2}\\
=&\;16\sum_{j\in A_1}\left(\left(1+R_{3,j}\right)^{-2\left[\frac{t}{\tau}\right]-2}-\exp\left\{-2R_{3,j}\left(\left[\frac{t}{\tau}\right]+1\right)\right\}\right)\\
&+16\sum_{j\in A_1}\exp\left\{-2R_{3,j}\left(\left[\frac{t}{\tau}\right]+1\right)\right\}\\
=&:J_{1,1}+J_{1,2}.
\end{align*}
For the term $J_{1,2}$, 
\begin{align*}
J_{1,2}&\leq 16\sum_{j\in A_1}e^{-32j^2\tau \left(\left[\frac{t}{\tau}\right]+1\right)}\leq16\sum_{1\leq j\leq\frac{1}{4}\sqrt{\frac{1}{(2-\theta)\tau}}}e^{-32j^2\tau \left(\left[\frac{t}{\tau}\right]+1\right)}\\
&\leq 16\int_0^{\infty}e^{-32z^2\tau\left(\left[\frac{t}{\tau}\right]+1\right)}\,dz\leq C\left(\left[\frac{t}{\tau}\right]\tau+\tau\right)^{-\frac{1}{2}}.
\end{align*}
As for $J_{11},$ 
\begin{align*}
J_{1,1}
&\leq 16\sum_{j\in A_1}\exp\left\{-2\left(\left[\frac{t}{\tau}\right]+1\right)\ln \left(1+R_{3,j}\right)\right\}\\
&\qquad \qquad \times\left(1-\exp\left\{2\left(\left[\frac{t}{\tau}\right]+1\right)\left(-R_{3,j}+\ln\left(1+R_{3,j}\right)\right)\right\}\right)\\
&\leq 16\sum_{j\in A_1}\exp\left\{-2\left(\left[\frac{t}{\tau}\right]+1\right)\ln \left(1-\lambda^n_j\tau\right)\right\}\\
&\qquad \qquad \times \left(1-\exp\left\{2\left(\left[\frac{t}{\tau}\right]+1\right)\left(2\lambda^n_j\tau+\ln\left(1-2\lambda^n_j\tau\right)\right)\right\}\right)\\
&\leq 16\sum_{1\leq j\leq\frac{1}{4}\sqrt{\frac{1}{(2-\theta)\tau}}}\exp\left\{-2\left(\left[\frac{t}{\tau}\right]+1\right)\ln \left(1-\lambda^n_j\tau\right)\right\}\\
& \qquad\qquad\qquad\qquad\times\left(1-\exp\left\{2\left(\left[\frac{t}{\tau}\right]+1\right)\left(2\lambda^n_j\tau+\ln\left(1-2\lambda^n_j\tau\right)\right)\right\}\right)\\
&\leq 16\sum_{1\leq j\leq\frac{1}{4}\sqrt{\frac{1}{(2-\theta)\tau}}}\exp\left\{2\left(\left[\frac{t}{\tau}\right]+1\right)C_2\lambda^n_j\tau\right\}\times \left(2\left(\left[\frac{t}{\tau}\right]+1\right)C_1\left(2\lambda^n_j\tau\right)^2\right)\\
&\leq\sum_{1\leq j\leq\frac{1}{4}\sqrt{\frac{1}{(2-\theta)\tau}}}C\left|\left(\left[\frac{t}{\tau}\right]+1\right)j^2\tau \right|^{-\frac{3}{2}}\left(\left[\frac{t}{\tau}\right]+1\right)j^4\tau^2\\
&\leq \sum_{1\leq j\leq\frac{1}{4}\sqrt{\frac{1}{(2-\theta)\tau}}}Cj\left(\left[\frac{t}{\tau}\right]+1\right)^{-\frac{1}{2}}\tau^{\frac{1}{2}}\leq C(\theta)\left(\left[\frac{t}{\tau}\right]\tau+\tau\right)^{-\frac{1}{2}},
\end{align*}
where we have used the fact that $\lambda^n_j\tau \in\left[-4\pi^2j^2\tau,-16j^2\tau \right]$ for $j=1,2,\ldots,\left[\frac{n}{2}\right]$ and $j^2\tau<\frac{1}{16(2-\theta)}$, so $z:=-\lambda^n_j\tau\in \left(0,\frac{\pi^2}{4(2-\theta)}\right]$, for such $z$, we have $-C_1z^2\leq-z+\ln (1+z)\leq 0$ and $\ln (1+z)\ge C_2z$ for some $C_1,C_2>0$. The inequalities $-z+\ln(1+z)\leq 0$ with $z\ge 0,\;1-e^{-z}\leq z$ with $z\ge 0$ and $e^{-z^2}\leq C(\alpha)z^{-\alpha}$ with $\alpha>0, z>0$ are also used, here we choose $\alpha=3$.\\

For the term $J_2,$
\begin{align*}
J_2&\leq \sum_{\frac{1}{2\pi}\sqrt{\frac{1}{(2-\theta)\tau}}<j\leq \left[\frac{n}{2}\right]}4(1-\epsilon)^{2\left[\frac{t}{\tau}\right]}(1-\theta \tau\lambda^n_j)^{-2}\leq \sum_{\frac{1}{2\pi}\sqrt{\frac{1}{(2-\theta)\tau}}<j\leq \left[\frac{n}{2}\right]}4(1-\epsilon)^{2\left[\frac{t}{\tau}\right]}(1+16\theta j^2\tau)^{-2}\\
&\leq 4\int_{\frac{1}{2\pi}\sqrt{\frac{1}{(2-\theta)\tau}}}^{\left[\frac{n}{2}\right]}(1-\epsilon)^{2\left[\frac{t}{\tau}\right]}(1+16\theta x^2\tau)^{-2}\,dx\quad (\text{let} \quad y=x\sqrt{\tau})\\
&\leq \frac{4}{\sqrt{\tau}}\int_{\frac{1}{2\pi}\sqrt{\frac{1}{2-\theta}}}^{\left[\frac{n}{2}\right]\sqrt{\tau}}(1-\epsilon)^{2\left[\frac{t}{\tau}\right]}(1+16\theta y^2)^{-2}\,dy\\
&\leq \frac{4}{\sqrt{\tau}}\times e^{-2\left(\left[\frac{t}{\tau}\right]+1\right)\ln (1-\epsilon)^{-1}}\times (1-\epsilon)^{-2}\int_{\frac{1}{2\pi}\sqrt{\frac{1}{2-\theta}}}^{\left[\frac{n}{2}\right]\sqrt{\tau}}(1+16\theta y^2)^{-2}\,dy\\
&\leq C(\theta)\left(\left[\frac{t}{\tau}\right]\tau+\tau\right)^{-\frac{1}{2}}\int_{\frac{1}{2\pi}\sqrt{\frac{1}{2-\theta}}}^{\left[\frac{n}{2}\right]\sqrt{\tau}}(1+16\theta y^2)^{-2}\,dy,
\end{align*}
where in the last line we use the inequality $e^{-z^2}\leq C(\alpha)z^{-\alpha}$, for $\alpha >0,	z>0$, and $\alpha$ is chosen to be 1. Therefore, it remains to prove $\int_{\frac{1}{2\pi}\sqrt{\frac{1}{2-\theta}}}^{\left[\frac{n}{2}\right]\sqrt{\tau}}(1+16\theta y^2)^{-2}\,dy\leq C$ for some $C>0$.\\
For the \textit{Case 1} and \textit{Case 2}, because $n^2\tau$ is bounded, so $\int_{\frac{1}{2\pi}\sqrt{\frac{1}{2-\theta}}}^{\left[\frac{n}{2}\right]\sqrt{\tau}}(1+16\theta y^2)^{-2}\,dy\leq C.$ \\
For the \textit{Case 3}, $$\int_{\frac{1}{2\pi}\sqrt{\frac{1}{2-\theta}}}^{\left[\frac{n}{2}\right]\sqrt{\tau}}(1+16\theta y^2)^{-2}\,dy\leq\int_{0}^{\infty}(1+16\theta y^2)^{-2}\,dy\leq C.$$
Combining these three cases, we finish the proof of $(\romannumeral3)$.

$(\romannumeral4)$ We only prove the case of $n$ being odd since the proof is similar when $n$ is even. For each fixed $n\ge3,0<\tau<1,$ $R_{1,j}R_{2,j}$ is a decreasing sequence of $j$. Hence, under Assumption \ref{Assumption3}, for all $j=1,2,\ldots,\left[\frac{n}{2}\right],$ we have 
$
-1+\epsilon\leq R_{1,j}R_{2,j}\leq R_{1,1}R_{2,1}<1.
$
Therefore, for each $n,\tau,$ we can choose $\epsilon':=\min\left\{\epsilon,1-R_{1,1}R_{2,1}\right\}>0$, such that $\left|R_{1,j}R_{2,j}\right|\leq 1-\epsilon'$ for $j=1,2,\ldots,\left[\frac{n}{2}\right].$ Then
\begin{align*}
2\left|\sum_{j=1}^{\left[\frac{n}{2}\right]}\left(R_{1,j}R_{2,j}\right)^{\left[\frac{t}{\tau}\right]}e_j(\kappa_n(x))\bar{e}_j(\kappa_n(y))\right|
\leq 2\sum_{j=1}^{\left[\frac{n}{2}\right]}\left|R_{1,j}R_{2,j}\right|^{\left[\frac{t}{\tau}\right]}\leq 2\sum_{j=1}^{\left[\frac{n}{2}\right]}(1-\epsilon')^{\left[\frac{t}{\tau}\right]}\rightarrow 0 
\end{align*}
$\text{as }t\rightarrow \infty$ for all $x,y\in[0,1].$ So there exists a $t:=t(n,\tau)>0$ large enough, such that when $t>t(n,\tau),$
\begin{align*}
-\frac{1}{2}\leq 2\sum_{j=1}^{\left[\frac{n}{2}\right]}\left(R_{1,j}R_{2,j}\right)^{\left[\frac{t}{\tau}\right]}e_j(\kappa_n(x))\bar{e}_j(\kappa_n(y))\leq \frac{1}{2},
\end{align*}
which implies
\begin{align*}
G^{n,\tau}_1(t,\kappa_n(x),y)=1+2\sum_{j=1}^{\left[\frac{n}{2}\right]}\left(R_{1,j}R_{2,j}\right)^{\left[\frac{t}{\tau}\right]}e_j(\kappa_n(x))\bar{e}_j(\kappa_n(y))\ge \frac{1}{2}
\end{align*}
for all $x,y\in[0,1]$ and $t>t(n,\tau).$ This will lead to our desired result after linear interpolation with respect to the space variable.
\end{proof}
\subsection{Intermittent upper bound}\label{sec4.2}
\begin{prop}\label{prop 3.2}
Under Assumption \ref{Assumption3}, there exists a random field $u^{n,\tau}\in \bigcup_{\beta>0}\mathcal{L}^{\beta,p}$ solving \eqref{mild full} for each $n\ge 3, 0<\tau<1$, $p\ge 2$. Moreover, $u^{n,\tau}$ is a.s-unique among all random fields satisfying
$$\sup_{x\in[0,1]}\mathbb{E}\left(\left|u^{n,\tau}(t,x)\right|^p\right)\leq C^p_1\exp\left\{C_2L^4_{\sigma}\lambda^4p^3t\right\},\quad \text{for } p\ge 2,\;t=m\tau,\;m\ge 0$$
with $C_1:=C_1\big(\sup_{x\in[0,1]}u_0(x),n\big)>0$ and $C_2>0$.
\end{prop}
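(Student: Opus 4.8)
The plan is to repeat the Picard iteration argument of Proposition~\ref{proposition 2.2}, with the semi-discrete Green function $G^n$ replaced by the two fully discrete kernels $G^{n,\tau}_1,G^{n,\tau}_2$, and with the time integral interpreted through the piecewise-constant-in-time structure induced by the $\theta$-scheme. Fix $n\ge 3$, $0<\tau<1$ and $p\ge 2$, and define the iterates $u^{n,\tau}_{(0)}(t,x):=u_0(x)$ and
\begin{align*}
u^{n,\tau}_{(q+1)}(t,x):={}&\int_0^1 G^{n,\tau}_1(t,x,y)u_0(\kappa_n(y))\,dy\\
&+\lambda\int_0^t\!\!\int_0^1 G^{n,\tau}_2(t-\kappa_\tau(s)-\tau,x,y)\,\sigma\big(u^{n,\tau}_{(q)}(\kappa_\tau(s),\kappa_n(y))\big)\,dW(s,y).
\end{align*}
Since Assumption~\ref{Assumption3} forces $|R_{1,j}R_{2,j}|\le 1$ and one always has $|e^n_j(x)|\le 1$, the same orthogonality computation as in Lemma~\ref{lemma4.1}$(\romannumeral2)$ gives $\int_0^1\big(G^{n,\tau}_1(t,x,y)\big)^2\,dy=\sum_{j=0}^{n-1}(R_{1,j}R_{2,j})^{2[t/\tau]}|e^n_j(x)|^2\le n$, so by the Cauchy--Schwarz inequality the deterministic term is bounded by $n\sup_{x\in[0,1]}|u_0(x)|^2$, uniformly in $t$ and $x$; this is where the dependence of $C_1$ on $n$ enters.

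The next step is to bound the stochastic term by Minkowski's inequality, then by the Burkholder--Davis--Gundy inequality, together with the linear growth of $\sigma$, obtaining for $t=m\tau$
\begin{align*}
\big\|u^{n,\tau}_{(q+1)}(t,x)\big\|_p^2\le{}& 2n\sup_{x\in[0,1]}|u_0(x)|^2\\
&+CL_\sigma^2p\lambda^2\int_0^t\!\!\int_0^1\big(G^{n,\tau}_2(t-\kappa_\tau(s)-\tau,x,y)\big)^2\Big(1+\sup_{y\in[0,1]}\big\|u^{n,\tau}_{(q)}(\kappa_\tau(s),y)\big\|_p^2\Big)\,ds\,dy.
\end{align*}
For $s\in[t_i,t_{i+1})$ we have $\kappa_\tau(s)=t_i$ and $t-\kappa_\tau(s)-\tau=(m-i-1)\tau$, so Lemma~\ref{lemma4.1}$(\romannumeral3)$ yields $\int_0^1\big(G^{n,\tau}_2(t-\kappa_\tau(s)-\tau,x,y)\big)^2\,dy\le 1+C(\theta)\big((m-i)\tau\big)^{-1/2}$; since the integrand is constant on each subinterval $[t_i,t_{i+1})$, the double integral reduces to the discrete convolution $\tau\sum_{k=1}^m\big(1+C(\theta)(k\tau)^{-1/2}\big)\big(1+\sup_{y\in[0,1]}\|u^{n,\tau}_{(q)}(t_{m-k},y)\|_p^2\big)$. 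Multiplying by $e^{-2\beta t}$, taking suprema over $x\in[0,1]$ and over grid times, and using the elementary bounds $\tau\sum_{k\ge 1}e^{-2\beta k\tau}\le C\beta^{-1}$ and $\tau\sum_{k\ge 1}(k\tau)^{-1/2}e^{-2\beta k\tau}\le C\beta^{-1/2}$, both uniform in $\tau\in(0,1)$, we arrive, for $\beta\ge\tfrac12$, at
\begin{align*}
\mathcal{N}^2_{\beta,p}\big(u^{n,\tau}_{(q+1)}\big)\le 2n\sup_{x\in[0,1]}|u_0(x)|^2+\frac{CL_\sigma^2p\lambda^2}{\sqrt{2\beta}}+\frac{CL_\sigma^2p\lambda^2}{\sqrt{2\beta}}\,\mathcal{N}^2_{\beta,p}\big(u^{n,\tau}_{(q)}\big),
\end{align*}
which has exactly the form of the recursion in the proof of Proposition~\ref{proposition 2.2}.

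From here the argument is word for word that of Proposition~\ref{proposition 2.2}: choosing $\beta\ge\tfrac12$ as in \eqref{beta} (any $\beta$ of the form $C_2L_\sigma^4\lambda^4p^3+\tfrac12$ makes the prefactor $CL_\sigma^2p\lambda^2/\sqrt{2\beta}$ at most $\tfrac12$), an induction as in \eqref{semi5} gives $\mathcal{N}^2_{\beta,p}(u^{n,\tau}_{(q)})\le C_1$ for all $q$ with $C_1:=C_1\big(\sup_{x}u_0(x),n\big)$, whence $\mathbb{E}\big(|u^{n,\tau}_{(q)}(t,x)|^p\big)\le C_1^{p/2}e^{\beta t}\le C_1^p\exp\{C_2L_\sigma^4\lambda^4p^3t\}$ for $t=m\tau$. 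Running the same estimate for the increments yields $\mathcal{N}^2_{\beta,p}\big(u^{n,\tau}_{(q+1)}-u^{n,\tau}_{(q)}\big)\le\tfrac12\,\mathcal{N}^2_{\beta,p}\big(u^{n,\tau}_{(q)}-u^{n,\tau}_{(q-1)}\big)$, so $(u^{n,\tau}_{(q)})_q$ is a Cauchy sequence in $\mathcal{N}_{\beta,p}$; completeness of $\mathcal{L}^{\beta,p}$ then produces a limit $u^{n,\tau}\in\mathcal{L}^{\beta,p}$, which solves \eqref{mild full} and inherits the claimed moment bound in the limit. Uniqueness among random fields with the stated growth follows from the very same contraction estimate.

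The only genuinely new ingredient compared with Proposition~\ref{proposition 2.2}, and the step I expect to require the most care, is the control of the space--time convolution $\int_0^t\int_0^1\big(G^{n,\tau}_2(t-\kappa_\tau(s)-\tau,x,y)\big)^2(\cdots)\,ds\,dy$ by a quantity that is \emph{uniform in both $n$ and $\tau$}. This rests entirely on Lemma~\ref{lemma4.1}$(\romannumeral3)$ (whose proof in turn exploits Assumption~\ref{Assumption3} and the frequency splitting $A_1\cup A_2$ that keeps $|R_{1,j}R_{2,j}|$ away from $1$), on the identity $t-\kappa_\tau(s)-\tau=(m-i-1)\tau$ that turns the time integral into a geometric-type sum, and on the $\tau$-independence of the discrete sums $\tau\sum_{k\ge 1}(k\tau)^{-1/2}e^{-2\beta k\tau}$. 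Once these are in place, the remainder is a verbatim transcription of the semi-discrete proof.
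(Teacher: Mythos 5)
Your proposal is correct and follows essentially the same route as the paper's proof: Picard iteration in the weighted norm $\mathcal{N}_{\beta,p}$, the bounds of Lemma \ref{lemma4.1} $(\romannumeral2)$ $(\romannumeral3)$ for the deterministic and stochastic terms, reduction of the time integral to a discrete convolution via $t-\kappa_\tau(s)-\tau=(m-i-1)\tau$, comparison of the resulting sums with $\int_0^\infty e^{-2\beta r}(r^{-1/2}+1)\,dr$ uniformly in $\tau$, and the choice $\beta=CL_\sigma^4p^2\lambda^4+\tfrac12$ to close the contraction. The paper's own argument differs only in inessential constants (e.g.\ it bounds the deterministic term by $6n\sup_x|u_0(x)|^2$ rather than your $n\sup_x|u_0(x)|^2$).
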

\begin{proof}
We apply Picard's iteration again by 
defining
\begin{align*}
&u^{n,\tau}_{(0)}(t,x):=u_0(x),\\
&u^{n,\tau}_{(q+1)}(t,x):=\int_{0}^{1}G^{n,\tau}_1(t,x,y)u_0(\kappa_n(y))\,dy\\
&\qquad\qquad\qquad+\lambda\int_{0}^{t}\int_{0}^{1}G^{n,\tau}_2(t-\kappa_{\tau}(s)-\tau,x,y)\sigma\left(u^{n,\tau}_{(q)}(\kappa_{\tau}(s),\kappa_n(y))\right)\,dW(s,y).
\end{align*}
Using Lemma \ref{lemma4.1} $(\romannumeral2)$ $(\romannumeral3)$, combining the linear growth of $\sigma$, Minkowski inequality and Burkholder-Davis-Gundy inequality, we obtain
\begin{align*}
&\left\|u^{n,\tau}_{(q+1)}(m\tau,x)\right\|^2_p\\
\leq &\;2\sup_{x\in[0,1]}\left|u_0(x)\right|^2\times\Big(1+4\sum_{j=1}^{\left[\frac{n}{2}\right]}\left(R_{1,j}R_{2,j}\right)^{2m}\Big)\\
&+CL^2_{\sigma}p\lambda^2\int_0^{m\tau}\left(\frac{1}{\sqrt{m\tau-\kappa_{\tau}(s)}}+1\right)\left(1+\sup_{y\in[0,1]}\left\|u^{n,\tau}_{(q)}(\kappa_{\tau}(s),y)\right\|^2_p\right)\,ds\\
\leq &\;2\sup_{x\in[0,1]}\left|u_0(x)\right|^2\times\Big(1+4\sum_{j=1}^{\left[\frac{n}{2}\right]}\left(R_{1,j}R_{2,j}\right)^{2m}\Big)+CL^2_{\sigma}p\lambda^2\int_0^{m\tau}\left(\frac{1}{\sqrt{m\tau-s}}+1\right)\,ds\\
&+CL^2_{\sigma}p\lambda^2\int_0^{m\tau}\left(\frac{1}{\sqrt{m\tau-\kappa_{\tau}(s)}}+1\right)\sup_{y\in[0,1]}\left\|u^{n,\tau}_{(q)}(\kappa_{\tau}(s),y)\right\|^2_p\,ds.
\end{align*}
Under Assumption \ref{Assumption3}, we have $\left|R_{1,j}R_{2,j}\right|<1$, so
$
1+4\sum_{j=1}^{\left[\frac{n}{2}\right]}\left(R_{1,j}R_{2,j}\right)^{2m}\leq 1+2n\leq3n.
$
Therefore, 
\begin{align}\label{full3}
\left\|u^{n,\tau}_{(q+1)}(m\tau,x)\right\|^2_p
\leq&\;6n\sup_{x\in[0,1]}\left|u_0(x)\right|^2+CL^2_{\sigma}p\lambda^2(\sqrt{m\tau}+m\tau)\nonumber\\
&+CL^2_{\sigma}p\lambda^2\int_0^{m\tau}\left(\frac{1}{\sqrt{m\tau-\kappa_{\tau}(s)}}+1\right)\sup_{y\in[0,1]}\left\|u^{n,\tau}_{(q)}(\kappa_{\tau}(s),y)\right\|^2_p\,ds.
\end{align}
Multiplying $e^{-2\beta m\tau}$ with $2\beta\ge 1$ on both sides of \eqref{full3} and taking supremum over $m\ge 0$, we obtain
\begin{align*}
&\sup_{m\ge 0}\sup_{x\in[0,1]}\left\{e^{-2\beta m\tau}\left\|u^{n,\tau}_{(q+1)}(m\tau,x)\right\|^2_p\right\}\\
\leq &\;6n\sup_{x\in[0,1]}\left|u_0(x)\right|^2+CL^2_{\sigma}p\lambda^2\left(\frac{1}{\sqrt{4\beta e}}+\frac{1}{2\beta e}\right)+CL^2_{\sigma}p\lambda^2\\
&\times\sup_{m\ge 0}\sum_{j=0}^{m-1}e^{-2\beta(m\tau-j\tau)}\int_{j\tau}^{(j+1)\tau}\left(\frac{1}{\sqrt{m\tau-j\tau}}+1\right)\,ds\sup_{j\ge 0}\sup_{y\in[0,1]}\left\{e^{-2\beta j\tau}\left\|u^{n,\tau}_{(q)}(j\tau,y)\right\|^2_p\right\}\\
\leq &\;6n\sup_{x\in[0,1]}\left|u_0(x)\right|^2+CL^2_{\sigma}p\lambda^2\left(\frac{1}{\sqrt{4\beta e}}+\frac{1}{2\beta e}\right)\\
&+CL^2_{\sigma}p\lambda^2\int_{0}^{\infty}e^{-2\beta r}\left(\frac{1}{\sqrt{r}}+1\right)\,dr\sup_{j\ge 0}\sup_{y\in[0,1]}\left\{e^{-2\beta j\tau}\left\|u^{n,\tau}_{(q)}(j\tau,y)\right\|^2_p\right\}\\
\leq &\;6n\sup_{x\in[0,1]}\left|u_0(x)\right|^2+CL^2_{\sigma}p\lambda^2\left(\frac{1}{\sqrt{4\beta e}}+\frac{1}{2\beta e}\right)\\&+CL^2_{\sigma}p\lambda^2\left(\sqrt{\frac{\pi}{2\beta}}+\frac{1}{2\beta}\right)\sup_{j\ge 0}\sup_{y\in[0,1]}\left\{e^{-2\beta j\tau}\left\|u^{n,\tau}_{(q)}(j\tau,y)\right\|^2_p\right\}\\
\leq &\;6n\sup_{x\in[0,1]}\left|u_0(x)\right|^2+\frac{3CL^2_{\sigma}p\lambda^2}{\sqrt{2\beta}}+\frac{3CL^2_{\sigma}p\lambda^2}{\sqrt{2\beta}}\sup_{j\ge 0}\sup_{y\in[0,1]}\left\{e^{-2\beta j\tau}\left\|u^{n,\tau}_{(q)}(j\tau,y)\right\|^2_p\right\},
\end{align*}
where in the last step we have used $\sqrt{\frac{\pi}{2\beta}}+\frac{1}{2\beta}\leq \frac{3}{\sqrt{2\beta}}$ for $2\beta\ge1.$

The remaining part of the proof is similar to that of Proposition \ref{proposition 2.2} by choosing $\beta=18C^2L^4_{\sigma}p^2\lambda^4+\frac{1}{2}$ that satisfies $\frac{3CL^2_{\sigma}p\lambda^2}{\sqrt{2\beta}}\leq\frac{1}{2}$ and $\beta\ge \frac{1}{2}$. Hence, we can get
\begin{align*}
\mathbb{E}\left(\big|u^{n,\tau}_{(q+1)}(m\tau,x)\big|^p\right)\leq C^{\frac{p}{2}}_1\exp\left\{\beta m\tau\right\},\quad p\ge 2,
\end{align*}
where $C_1=(12n+1)\sup_{x\in[0,1]}|u_0(x)|^2+1.$
 Moreover, by the similar technique as in Proposition \ref{proposition 2.2}, one can prove the convergence of $\left\{u^{n,\tau}_{(q)}\right\}_{q\ge 0}$ and the uniqueness of the solution of \eqref{mild full}.
We omit the details.
The proof is completed.
\end{proof}
Based on Proposition \ref{prop 3.2}, we give the following result, which shows the upper bound for the upper $p$th moment Lyapunov exponent.
\begin{prop}\label{thm4.4}
Under Assumption \ref{Assumption3}, there exists a positive constant $C$ such that for each fixed $0<\tau<1,\;n\ge 3$ and $p\in[2,\infty),$ we have
\begin{align*}
\sup_{x\in[0,1]}\bar{\gamma}^{n,\tau}_p(x)\leq CL^4_{\sigma}\lambda^4p^3.
\end{align*}
\end{prop}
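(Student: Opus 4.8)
The plan is to obtain Proposition \ref{thm4.4} as an immediate corollary of the a priori moment bound established in Proposition \ref{prop 3.2}, in exactly the same way that Proposition \ref{thm3.3} follows from Proposition \ref{proposition 2.2} in the semi-discrete setting. Under Assumption \ref{Assumption3}, Proposition \ref{prop 3.2} furnishes constants $C_1=C_1\big(\sup_{x\in[0,1]}u_0(x),n\big)>0$ and $C_2>0$, the latter uniform in $n$, $\tau$ and $p$ (possibly depending on $\theta$), such that
\begin{align*}
\sup_{x\in[0,1]}\mathbb{E}\left(\left|u^{n,\tau}(m\tau,x)\right|^p\right)\leq C_1^p\exp\left\{C_2L_{\sigma}^4\lambda^4p^3\,m\tau\right\},\qquad p\ge2,\ m\ge0.
\end{align*}
Since the exponential growth rate on the right-hand side already carries the desired order $CL_{\sigma}^4\lambda^4p^3$, the only point to check is that the prefactor $C_1^p$ contributes nothing to the Lyapunov exponent.

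Concretely, I would fix $x\in[0,1]$, take logarithms in the bound above and divide by $m\tau$, obtaining
\begin{align*}
\frac{1}{m\tau}\log\mathbb{E}\left(\left|u^{n,\tau}(m\tau,x)\right|^p\right)\leq\frac{p\log C_1}{m\tau}+C_2L_{\sigma}^4\lambda^4p^3.
\end{align*}
Letting $m\to\infty$ with $n$, $\tau$, $p$ and $u_0$ all fixed, the first term on the right tends to $0$ because $p\log C_1$ is a finite constant, hence
\begin{align*}
\bar{\gamma}^{n,\tau}_p(x)=\limsup_{m\to\infty}\frac{1}{m\tau}\log\mathbb{E}\left(\left|u^{n,\tau}(m\tau,x)\right|^p\right)\leq C_2L_{\sigma}^4\lambda^4p^3.
\end{align*}
As this upper bound does not depend on $x$, taking the supremum over $x\in[0,1]$ yields $\sup_{x\in[0,1]}\bar{\gamma}^{n,\tau}_p(x)\leq C_2L_{\sigma}^4\lambda^4p^3$, which is the assertion with $C=C_2$.

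There is essentially no obstacle at the level of this proposition itself; the real work has already been carried out in Proposition \ref{prop 3.2}, whose proof rests on Picard's iteration combined with the second-moment estimates for the fully discrete Green function in Lemma \ref{lemma4.1} $(\romannumeral2)$ $(\romannumeral3)$ — in particular the bound $\int_{0}^{1}\big(G^{n,\tau}_2(t,x,y)\big)^2\,dy\leq 1+C(\theta)\big(\bigl[\tfrac{t}{\tau}\bigr]\tau+\tau\big)^{-1/2}$, which is integrable in $t$ and drives the choice $\beta=18C^2L_{\sigma}^4p^2\lambda^4+\tfrac12$ and hence the cubic dependence on $p$. The only mild subtlety worth flagging is that $t=m\tau$ ranges over a discrete set, so the $\limsup$ is taken along the sequence $m\to\infty$ rather than along a continuum; since the moment bound holds for every such $m$ this is harmless. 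I would also state explicitly that the resulting constant $C$ is uniform in both $n$ and $\tau$, matching the semi-discrete statement in Proposition \ref{thm3.3}, so that the bound is compatible with the subsequent renewal-type refinement of the second moment.
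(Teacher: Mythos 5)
Your proposal is correct and follows exactly the route the paper intends: Proposition \ref{thm4.4} is stated as a direct consequence of the a priori bound in Proposition \ref{prop 3.2}, and your computation (take logarithms, divide by $m\tau$, note that the prefactor $C_1^p$ contributes a term $\frac{p\log C_1}{m\tau}\to 0$ as $m\to\infty$ with $n,\tau,p$ fixed) is precisely the omitted argument. Nothing further is needed.
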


\subsection{Intermittent lower bound}\label{sec4.3}
 It remains to investigate the lower bound of $\bar{\gamma}^{n,\tau}_2$.
Before that, we give the following reverse discrete Gr{\"o}nwall type inequality.
\begin{lemma}{(Reverse Discrete Gr{\"o}nwall type inequality)}\label{lemma 3.5}
Let $\{y_n\}_{n\ge 0}$ be nonnegative sequence and satisfy 
\begin{align}\label{discrete1}
y_n\ge \alpha+\sum_{0\leq k\leq n-1}\beta y_k
\end{align}
for $n\ge N$,where $\alpha,\beta>0$. Then for $l=0,1,2,\ldots,$
\begin{align}\label{discrete2}
y_{N+l}\ge \Big(\alpha+\beta\sum_{0\leq k\leq N-1}y_k\Big)(1+\beta)^{l}.
\end{align}
\end{lemma}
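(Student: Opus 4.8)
The plan is to mimic the proof of the continuous reverse Gr\"onwall inequality (Lemma \ref{lemma 2.4}) by introducing the partial-sum quantity $z_n:=\alpha+\beta\sum_{0\le k\le n-1}y_k$ for $n\ge N$ and showing that it grows at least geometrically with ratio $1+\beta$. First I would observe that hypothesis \eqref{discrete1} reads exactly $y_n\ge z_n$ for every $n\ge N$. Since $z_{n+1}-z_n=\beta y_n$, this immediately gives $z_{n+1}=z_n+\beta y_n\ge z_n+\beta z_n=(1+\beta)z_n$ for all $n\ge N$.

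Iterating this one-step bound from $n=N$ yields $z_{N+l}\ge(1+\beta)^l z_N$ for $l=0,1,2,\ldots$, and by definition $z_N=\alpha+\beta\sum_{0\le k\le N-1}y_k$, which is precisely the prefactor appearing in \eqref{discrete2}. Finally, because $N+l\ge N$, hypothesis \eqref{discrete1} applies at the index $N+l$ and gives $y_{N+l}\ge z_{N+l}\ge\bigl(\alpha+\beta\sum_{0\le k\le N-1}y_k\bigr)(1+\beta)^l$, which is the claim.

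Alternatively, one may argue directly by induction on $l$: the base case $l=0$ is \eqref{discrete1} at $n=N$, and in the inductive step one splits $\sum_{0\le k\le N+l}y_k=\sum_{0\le k\le N-1}y_k+\sum_{0\le j\le l}y_{N+j}$, bounds the second sum below via the induction hypothesis together with the geometric-series identity $\sum_{0\le j\le l}(1+\beta)^j=\frac{(1+\beta)^{l+1}-1}{\beta}$, and collects terms. Either way the argument is elementary, so there is no genuine obstacle here; the only point requiring care is that the recursion $z_{n+1}\ge(1+\beta)z_n$ (equivalently, the induction step) is licensed only for indices $\ge N$, so the geometric growth starts at $z_N$ and the bound $y_{N+l}\ge z_{N+l}$ must be invoked at the index $N+l\ge N$.
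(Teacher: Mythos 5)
Your proposal is correct, and your primary argument is organized differently from the paper's. The paper proves \eqref{discrete2} by a direct induction on $l$: the base case is \eqref{discrete1} at $n=N$, and the inductive step splits $\sum_{0\le k\le N+q}\beta y_k$ into the first $N$ terms plus $\sum_{0\le j\le q}\beta y_{N+j}$, inserts the induction hypothesis, and then closes the argument with the identity $1+\sum_{0\le j\le q}\beta(1+\beta)^j=(1+\beta)^{q+1}$, which is itself verified by a second induction. Your main route instead introduces the partial sum $z_n:=\alpha+\beta\sum_{0\le k\le n-1}y_k$, reads the hypothesis as $y_n\ge z_n$ for $n\ge N$, and derives the one-step multiplicative recursion $z_{n+1}=z_n+\beta y_n\ge(1+\beta)z_n$, which iterates immediately to $z_{N+l}\ge(1+\beta)^l z_N$ and hence to the claim via $y_{N+l}\ge z_{N+l}$. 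This mirrors the proof of the continuous Lemma \ref{lemma 2.4} more faithfully, avoids the nested induction and the explicit geometric-series identity, and is the cleaner of the two; the alternative you sketch at the end is essentially the paper's argument. You also correctly flag the one point of care, namely that the recursion is only licensed for indices $\ge N$, so the geometric growth must be anchored at $z_N$.
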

\begin{proof}
We prove \eqref{discrete2} by induction. When $l=0$, \eqref{discrete1} with $n=N$ implies $$y_N\ge \alpha+\sum_{0\leq k\leq N-1}\beta y_k,$$ which is \eqref{discrete2} with $l=0$.

Suppose that \eqref{discrete2} holds for all $l\leq q$, now we prove it in the case of $l=q+1$.
By \eqref{discrete1} with $n=N+q+1$ and the case of $l\leq q$, we get
\begin{align*}
y_{N+q+1}&\ge \alpha+\sum_{0\leq k\leq N-1}\beta y_k+\sum_{N\leq j\leq N+q}\beta y_j=\alpha+\sum_{0\leq k\leq N-1}\beta y_k+\sum_{0\leq j\leq q}\beta y_{N+j}\\
&=\alpha+\sum_{0\leq k\leq N-1}\beta y_k+\sum_{0\leq j\leq q}\beta \left(\alpha(1+\beta)^j+\beta (1+\beta)^j\sum_{0\leq k\leq N-1}y_k\right)\\
&=\alpha\left(1+\sum_{0\leq j\leq q}\beta (1+\beta)^j\right)+\beta\left(1+\sum_{0\leq j\leq q}\beta (1+\beta)^j\right)\sum_{0\leq k\leq N-1}y_k.
\end{align*}
It suffices to prove 
\begin{align}\label{equality}
1+\sum_{0\leq j\leq q}\beta(1+\beta)^j=(1+\beta)^{q+1},\quad q\ge 1.
\end{align}
To this end, we show it by induction again.

Obviously, \eqref{equality} holds for $q=1$. Suppose that it holds for $q=r-1$, we check it for $q=r$,
\begin{align*}
(1+\beta)^{r+1}&=\left(1+\sum_{0\leq j\leq r-1}\beta (1+\beta)^j\right)(1+\beta)\\
&=1+\sum_{0\leq j\leq r-1}\beta (1+\beta)^j+\beta\left(1+\sum_{0\leq j\leq r-1}\beta(1+\beta)^j\right)\\
&=1+\sum_{0\leq j\leq r-1}\beta (1+\beta)^j+\beta(1+\beta)^r=1+\sum_{0\leq j\leq r}\beta(1+\beta)^j.
\end{align*}
Hence we finish the proof.
\end{proof}
\begin{prop}\label{theorem 3.6}
Under Assumptions \ref{Assumption 2} and \ref{Assumption3}, for each fixed $n\ge 3$ and $0<\tau<1$, $$\inf_{x\in[0,1]}\bar{\gamma}^{n,\tau}_2(x)\ge \frac{\log (1+\lambda^2J^2_0\tau)}{\tau}>0.$$
\end{prop}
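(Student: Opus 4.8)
The plan is to adapt the argument of Proposition~\ref{thm3.5} to the fully discrete setting: replace the continuous reverse Gr\"onwall inequality (Lemma~\ref{lemma 2.4}) by its discrete counterpart (Lemma~\ref{lemma 3.5}), and keep track of the piecewise-constant time argument $\kappa_\tau(\cdot)$ appearing in the mild form \eqref{mild full}.

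First I would set $t=t_m=m\tau$ in \eqref{mild full} and apply the Walsh isometry, using the a priori moment bound of Proposition~\ref{prop 3.2} to justify the square-integrability of the stochastic integrand. This gives
\begin{align*}
\mathbb{E}\!\left(\left|u^{n,\tau}(t_m,x)\right|^2\right)
&=\left|\int_0^1 G^{n,\tau}_1(t_m,x,y)u_0(\kappa_n(y))\,dy\right|^2\\
&\quad+\lambda^2\int_0^{t_m}\!\!\int_0^1\!\left(G^{n,\tau}_2(t_m-\kappa_\tau(s)-\tau,x,y)\right)^2\mathbb{E}\!\left(\left|\sigma\!\left(u^{n,\tau}(\kappa_\tau(s),\kappa_n(y))\right)\right|^2\right)ds\,dy.
\end{align*}
Then I would restrict to $m$ with $t_m>t(n,\tau)$, where $t(n,\tau)$ is the threshold from Lemma~\ref{lemma4.1}~$(\romannumeral4)$; for such $m$ we have $G^{n,\tau}_1(t_m,\cdot,\cdot)\ge\tfrac12>0$, so Assumption~\ref{Assumption 2} ($u_0\ge I_0$) together with Lemma~\ref{lemma4.1}~$(\romannumeral1)$ shows the deterministic term is $\ge I_0^2$. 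For the stochastic term, Assumption~\ref{Assumption 2} ($|\sigma(z)|\ge J_0|z|$) and Lemma~\ref{lemma4.1}~$(\romannumeral2)$ ($\int_0^1(G^{n,\tau}_2)^2\,dy\ge1$) bound it below by $\lambda^2 J_0^2$ times $\int_0^{t_m}\inf_{y\in[0,1]}\mathbb{E}(|u^{n,\tau}(\kappa_\tau(s),\kappa_n(y))|^2)\,ds$, after pulling the infimum out of the spatial integral weighted by $(G^{n,\tau}_2)^2$.

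Next I would discretize the time integral: writing $\int_0^{t_m}=\sum_{j=0}^{m-1}\int_{j\tau}^{(j+1)\tau}$ and using that $\kappa_\tau(s)=t_j$ on $[j\tau,(j+1)\tau)$ — so the Green-function factor is constant in $s$ and equals $\left(G^{n,\tau}_2(t_{m-j-1},x,y)\right)^2$ — the $s$-integral contributes only the weight $\tau$. Setting $a_m:=\inf_{x\in[0,1]}\mathbb{E}(|u^{n,\tau}(t_m,x)|^2)$ and taking the infimum over $x\in[0,1]$ (noting $\{\kappa_n(y):y\in[0,1]\}\subset[0,1]$, hence $\inf_y\mathbb{E}(|u^{n,\tau}(t_j,\kappa_n(y))|^2)\ge a_j$), I arrive at
\[
a_m\ge I_0^2+\lambda^2 J_0^2\tau\sum_{j=0}^{m-1}a_j\qquad\text{for all }m>t(n,\tau)/\tau.
\]
Applying Lemma~\ref{lemma 3.5} with $\alpha=I_0^2$, $\beta=\lambda^2 J_0^2\tau$ and $N:=\lfloor t(n,\tau)/\tau\rfloor+1$ yields $a_{N+l}\ge I_0^2(1+\lambda^2 J_0^2\tau)^{l}$ for all $l\ge0$; taking $\tfrac1{m\tau}\log(\cdot)$ with $m=N+l$ and letting $l\to\infty$ gives $\bar{\gamma}^{n,\tau}_2(x)\ge\log(1+\lambda^2 J_0^2\tau)/\tau$ for every $x\in[0,1]$, and the conclusion follows since $\lambda^2 J_0^2\tau>0$.

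The Walsh isometry and the $\sigma$/Green-function lower estimates are routine; the two points deserving care are (i) the time-discretization bookkeeping — verifying that the piecewise-constant argument $\kappa_\tau(s)$ turns the stochastic-integral lower bound into precisely $\tau\sum_{j=0}^{m-1}a_j$, which is the form required to invoke Lemma~\ref{lemma 3.5} — and (ii) invoking Lemma~\ref{lemma4.1}~$(\romannumeral4)$ to guarantee that $G^{n,\tau}_1$ is nonnegative, hence the clean bound $I_0^2$ on the deterministic part, for all sufficiently large $m$. Both are handled exactly as in the proof of Proposition~\ref{thm3.5}, so I do not anticipate a genuine obstacle.
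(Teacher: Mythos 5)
Your proposal is correct and follows essentially the same route as the paper: Walsh isometry on the mild form, the lower bounds from Assumption \ref{Assumption 2} together with Lemma \ref{lemma4.1} $(\romannumeral1)$ $(\romannumeral2)$ $(\romannumeral4)$, conversion of the time integral into $\tau\sum_{j=0}^{m-1}a_j$ via the piecewise-constant argument $\kappa_\tau$, and the reverse discrete Gr{\"o}nwall inequality of Lemma \ref{lemma 3.5} with $\alpha=I_0^2$, $\beta=\lambda^2J_0^2\tau$, $N=\left[t(n,\tau)/\tau\right]+1$. No gaps.
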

\begin{proof}
For each fixed $n\ge 3,\;0<\tau<1,$ Lemma \ref{lemma4.1} $(\romannumeral4)$ implies that there is a $t(n,\tau)>0,$ such that $G^{n,\tau}_1(t,x,y)>0$ for $t>t(n,\tau).$ Hence, taking the second moment on both sides of \eqref{mild full}, combining Walsh isometry and  Lemma \ref{lemma4.1} $(\romannumeral1)$ $(\romannumeral2)$, we get when $m\tau>t(n,\tau),$
\begin{align*}
&\mathbb{E}\left(\left|u^{n,\tau}(m\tau,x)\right|^2\right)\\
\ge&\; I^2_0+\lambda^2J^2_0\int_{0}^{m\tau}\int_{0}^{1}G^{n,\tau}_2(m\tau-\kappa_{\tau}(s)-\tau,x,y)^2\mathbb{E}\left(\left|u^{n,\tau}(\kappa_{\tau}(s),\kappa_n(y))\right|^2\right)\,ds\,dy\\
\ge&\; I^2_0+\lambda^2J^2_0\int_{0}^{m\tau}\int_{0}^{1}G^{n,\tau}_2(m\tau-\kappa_{\tau}(s)-\tau,x,y)^2\,dy\inf_{y\in[0,1]}\mathbb{E}\left(\left|u^{n,\tau}(\kappa_{\tau}(s),y)\right|^2\right)\,ds\\
\ge&\; I^2_0+\lambda^2J^2_0\int_{0}^{m\tau}\inf_{y\in[0,1]}\mathbb{E}\left(\left|u^{n,\tau}(\kappa_{\tau}(s),y)\right|^2\right)\,ds.
\end{align*}
Taking infimum over $x\in[0,1]$ yields
\begin{align*}
\inf_{x\in[0,1]}\mathbb{E}\left(\left|u^{n,\tau}(m\tau,x)\right|^2\right)\ge I^2_0+\lambda^2J^2_0\int_{0}^{m\tau}\inf_{y\in[0,1]}\mathbb{E}\left(\left|u^{n,\tau}(\kappa_{\tau}(s),y)\right|^2\right)\,ds,
\end{align*}
which is equivalent to 
\begin{align*}
\inf_{x\in[0,1]}\mathbb{E}\left(\left|u^{n,\tau}(m\tau,x)\right|^2\right)\ge I^2_0+\lambda^2J^2_0\sum_{j=0}^{m-1}\inf_{y\in[0,1]}\mathbb{E}\left(\left|u^{n,\tau}(j\tau,y)\right|^2\right)\tau.
\end{align*}
Applying Lemma \ref{lemma 3.5} with $\alpha=I^2_0, \beta=\lambda^2J^2_0\tau, N=\left[\frac{t(n,\tau)}{\tau}\right]+1$ and omitting the last term on the right hand side of \eqref{discrete2}, we obtain
\begin{align*}
\inf_{x\in[0,1]}\mathbb{E}\left(\left|u^{n,\tau}(N\tau+l\tau,x)\right|^2\right)\ge I^2_0(1+\lambda^2J^2_0\tau)^l.
\end{align*}
This leads to 
\begin{align*}
\inf_{x\in[0,1]}\bar{\gamma}^{n,\tau}_2(x)=\inf_{x\in[0,1]}\limsup_{l\rightarrow \infty}\frac{\log \mathbb{E}\left(\left|u^{n,\tau}(N\tau+l\tau,x)\right|^2\right)}{N\tau+l\tau}\ge \frac{\log (1+\lambda^2J^2_0\tau)}{\tau}>0.
\end{align*}
The proof is finished.
\end{proof}
\begin{remark}
(\romannumeral1) By Theorem \ref{theorem 3.6}, we have
$$\inf_{x\in[0,1]}\liminf_{\tau\rightarrow 0}\bar{\gamma}^{n,\tau}_2(x)\ge \lim_{\tau\rightarrow 0}\frac{\log(1+\lambda^2J^2_0\tau)}{\lambda^2J^2_0\tau}\lambda^2J^2_0=\lambda^2J^2_0,$$
where this lower bound is equal to that of the spatial semi-discretization.\vspace{1 ex}\\
(\romannumeral2) As for the exponential integrator method (see \cite{Cohen20}), whose continuous version can be written into the following mild form:
\begin{align*}
u^{n,\tau}_E(t,x):=&\int_{0}^{1}G^n(t,x,y)u_0(\kappa_n(y))\,dy\\
&+\lambda\int_{0}^{t}\int_{0}^{1}G^n(t-\kappa_{\tau}(s),x,y)\sigma\left(u^{n,\tau}_E(\kappa_{\tau}(s),\kappa_n(y))\right)\,W(dsdy),
\end{align*}
for $t=i\tau, x\in[0,1]$, where 
$$G^n(t,x,y)=\sum_{j=0}^{n-1}e^{\lambda^n_j t}e^n_j(x)\bar{e}_j(\kappa_n(y)),$$
we can get the weak intermittency of this fully discrete scheme similarly.
\end{remark}

\subsection{Sharp exponential order of the second moment}
In this subsection, by applying a discrete renewal method, the second moment of the numerical solution of the fully discrete scheme is proved to have sharp exponential order $C\lambda^4t.$ 
\begin{thm}\label{thm4.9}
Let Assumptions \ref{Assumption 2} and \ref{assumption3} hold. Then for $n,\tau$ satisfying $\frac{J^4_0n^2\tau}{16\pi\zeta^2}+16\pi n^2\tau< 1$, we have
\begin{align}\label{sec5eq11}
\inf_{x\in[0,1]}\mathbb{E}\left(\left|u^{n,\tau}(m\tau,\kappa_n(x))\right|^2\right)\ge C_1e^{C_2J^4_0\lambda^4m\tau},\quad m\tau>T,
\end{align}
where $T:=T(n,\tau)>0,\;C_1=\frac{16\pi\zeta I^2_0}{J^2_0+32\pi\zeta},\;C_2=\frac{4\pi^2\zeta^2}{(J^2_0+32\pi\zeta)^2}.$
\end{thm}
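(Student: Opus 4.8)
The plan is to transfer the renewal argument behind Theorem \ref{thm5.1} to the discrete time grid. First I take the second moment of \eqref{mild full} at the grid point $\kappa_n(x)$, use Walsh's isometry, Lemma \ref{lemma4.1} $(\romannumeral1)$ $(\romannumeral2)$ and $\inf_{x\neq0}|\sigma(x)/x|\ge J_0$; since $u_0\equiv I_0$, the deterministic term equals $I_0^2$ by Lemma \ref{lemma4.1} $(\romannumeral1)$. Splitting $\int_0^{m\tau}ds=\sum_{k=0}^{m-1}\int_{k\tau}^{(k+1)\tau}ds$ (so that $\kappa_\tau(s)=k\tau$ and $m\tau-\kappa_\tau(s)-\tau=(m-1-k)\tau$) and noting that, because $|e^n_j(\kappa_n(x))|^2=1$, the quantity $a_l:=\int_0^1(G^{n,\tau}_2(l\tau,\kappa_n(x),y))^2\,dy=\sum_{j=0}^{n-1}(R_{1,j}R_{2,j})^{2l}R_{1,j}^2$ does not depend on $x$, one arrives at the discrete renewal inequality
\begin{align*}
v_m:=\inf_{x\in[0,1]}\mathbb{E}\!\left(|u^{n,\tau}(m\tau,\kappa_n(x))|^2\right)\ \ge\ I_0^2+\lambda^2J_0^2\tau\sum_{k=0}^{m-1}a_{m-1-k}\,v_k,\qquad m\ge 1.
\end{align*}

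The key analytic ingredient is the discrete analogue of Lemma \ref{lemma 5.1}: a refined lower bound $a_l\ge c_0\,(1-e^{-c_1n^2\tau l})/\sqrt{\tau l}$. This follows from $R_{1,j}^2\ge1$ together with a \emph{uniform} exponential comparison $R_{1,j}R_{2,j}=\tfrac{1-(1-\theta)z_j}{1+\theta z_j}\ge e^{-cz_j}>0$, valid for every $j$ because $z_j:=-\tau\lambda^n_j=4n^2\tau\sin^2(j\pi/n)\le 4n^2\tau$ is uniformly small under the coupling hypothesis (which, incidentally, implies Assumption \ref{Assumption3}, so $|R_{1,j}R_{2,j}|<1$ and all Green functions are well defined); then $\sum_{j}(R_{1,j}R_{2,j})^{2l}\ge\sum_{j}e^{-cz_jl}$ is compared with a Gaussian integral and integrated in polar coordinates exactly as in the proof of Lemma \ref{lemma 5.1}, using $z_j\in[16j^2\tau,4\pi^2j^2\tau]$.

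Next comes the renewal normalization. Choose a geometric weight $\rho>1$, equivalently a rate $\alpha$ via $\rho=e^{\alpha\tau}$, so that $q_j:=\lambda^2J_0^2\tau\rho^{-(j+1)}a_j$ is a probability sequence, i.e.
\begin{align*}
\lambda^2J_0^2\tau\sum_{j\ge0}\rho^{-(j+1)}a_j=\lambda^2J_0^2\tau\sum_{j=0}^{n-1}\frac{R_{1,j}^2}{\rho-(R_{1,j}R_{2,j})^2}=1 ;
\end{align*}
the left side is continuous and strictly decreasing on $(1,\infty)$, blowing up through the $j=0$ term $\tfrac1{\rho-1}$ as $\rho\downarrow1$, so there is a unique root $\rho=\rho^{\ast}>1$ (the discrete counterpart of Lemma \ref{lemmag}, with $n\ge\zeta\lambda^2$ localizing $\rho^{\ast}$). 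Multiplying the renewal inequality by $\rho^{-m}$ and setting $w_m:=\rho^{-m}v_m$ gives $w_m\ge\rho^{-m}I_0^2+\sum_{k=0}^{m-1}q_{m-1-k}w_k$; a direct induction (a discrete super-solution comparison in place of \cite[Theorem 7.11]{KD14}) shows $w_m$ dominates the solution of the corresponding renewal equation with equality, and the discrete renewal theorem (in place of \cite[Theorem 8.5.14]{AKLS06}; applicable since $q_0>0$ makes $\{q_j\}$ aperiodic and $\sum_j(j+1)q_j<\infty$) yields $\liminf_{m\to\infty}w_m\ge\big(\sum_{m\ge0}\rho^{-m}I_0^2\big)/\big(\sum_{j\ge0}(j+1)q_j\big)>0$. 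Hence $v_m\ge C_1(\rho^{\ast})^m$ for $m\tau>T:=T(n,\tau)$, and since $(\rho^{\ast})^m=e^{m\log\rho^{\ast}}$, the bound \eqref{sec5eq11} follows once $\log\rho^{\ast}\ge C_2J_0^4\lambda^4\tau$ is established, the constants $C_1,C_2$ being read off by bookkeeping.

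The main obstacle is precisely this lower bound on $\rho^{\ast}$. Writing $H(\rho):=\lambda^2J_0^2\tau\sum_jR_{1,j}^2/(\rho-(R_{1,j}R_{2,j})^2)$, which is strictly decreasing, it suffices to check $H(e^{C_2J_0^4\lambda^4\tau})\ge1$; the zero mode alone contributes only $\lambda^2J_0^2\tau/(e^{C_2J_0^4\lambda^4\tau}-1)\approx1/(C_2J_0^2\lambda^2)$, which for large $\lambda$ is far below $1$, so one must retain the whole band of indices up to $\min(n,1/\sqrt\tau)$. Using $(R_{1,j}R_{2,j})^2\le e^{-cz_j}$ one gets $\rho-(R_{1,j}R_{2,j})^2\lesssim C_2J_0^4\lambda^4\tau+cz_j\lesssim(C_2J_0^4\lambda^4+Cj^2)\tau$, whence $H(e^{C_2J_0^4\lambda^4\tau})\gtrsim\lambda^2J_0^2\sum_j(C_2J_0^4\lambda^4+Cj^2)^{-1}$; comparing with $\int_{\mathbb{R}}(C_2J_0^4\lambda^4+Cx^2)^{-1}\,dx=\pi/(\sqrt{CC_2}\,J_0^2\lambda^2)$ gives $H(e^{C_2J_0^4\lambda^4\tau})\gtrsim\pi/\sqrt{CC_2}\ge1$, provided $C_2$ is chosen small enough \emph{and} the summation band genuinely reaches past $\sqrt{C_2}\,J_0^2\lambda^2$. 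The latter needs both $n\ge\zeta\lambda^2$ (so the band has width $\gtrsim\lambda^2$) and $\frac{J_0^4n^2\tau}{16\pi\zeta^2}+16\pi n^2\tau<1$ (so that $\lambda^4\tau$, hence $z_j$, stays uniformly small across the band and the Gaussian tail $e^{-c_1n^2\tau l}$ is harmless); tracking these inequalities carefully is what produces the stated $C_1=\frac{16\pi\zeta I_0^2}{J_0^2+32\pi\zeta}$ and $C_2=\frac{4\pi^2\zeta^2}{(J_0^2+32\pi\zeta)^2}$.
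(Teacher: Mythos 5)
Your overall architecture is the paper's: a discrete renewal inequality for $v_m=\inf_x\mathbb{E}(|u^{n,\tau}(m\tau,\kappa_n(x))|^2)$, a refined Gaussian-type lower bound on $a_l=\int_0^1(G^{n,\tau}_2(l\tau,\kappa_n(x),y))^2\,dy$ (the paper's Lemma \ref{lemma4.10}), a normalization producing a discrete probability density (the paper's Lemma \ref{lemmagr}), the discrete Renewal Theorem plus a super-solution comparison, and finally a lower bound on the normalizing rate to extract $C_2J_0^4\lambda^4$. Where you genuinely deviate is the normalization: you weight the \emph{exact} kernel $a_l$ geometrically and solve $\lambda^2J_0^2\tau\sum_j R_{1,j}^2/(\rho-(R_{1,j}R_{2,j})^2)=1$ for $\rho^\ast$, whereas the paper first replaces $a_l$ by its lower bound $\tilde b\,(1-e^{-4n^2\pi^2l\tau})/\sqrt{l\tau}$ and normalizes that with the weight $e^{-\pi\mu^2\tilde b^2 r\tau}$, bounding $\mu$ via Gaussian-integral comparisons. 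Your version is cleaner algebraically (an exact rational equation for $\rho^\ast$), the paper's is easier to bound explicitly; your sketch of the crucial lower bound $\log\rho^\ast\ge C_2J_0^4\lambda^4\tau$ (evaluate $H$ at the target rate, compare with $\int(A+Cx^2)^{-1}dx$ over a band of width $\gtrsim\zeta\lambda^2$) is the correct mechanism and is where both hypotheses $n\ge\zeta\lambda^2$ and $\frac{J_0^4n^2\tau}{16\pi\zeta^2}+16\pi n^2\tau<1$ enter, exactly as in Lemma \ref{lemmagr}. You leave the constant-tracking to ``bookkeeping,'' which is acceptable as a sketch but is precisely the content needed to land on the stated $C_1,C_2$.

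One concrete error needs repair: you assert $R_{1,j}^2\ge 1$, but $R_{1,j}=(1-\theta\tau\lambda^n_j)^{-1}=(1+\theta z_j)^{-1}\le 1$ since $\lambda^n_j\le 0$, so this inequality goes the wrong way and your lower bound on $a_l$ does not follow as written. The fix is either the paper's regrouping $(R_{1,j}R_{2,j})^{2l}R_{1,j}^2=(R_{1,j}R_{2,j})^{2(l+1)}R_{2,j}^{-2}$ with $R_{2,j}^{-2}\ge 1$ (which shifts the lag to $l+1$, matching the $[t/\tau]+1$ in Lemma \ref{lemma4.10}, and preserves the stated constants), or the cruder bound $R_{1,j}^2\ge(1+4\theta n^2\tau)^{-2}\ge{\rm const}$ under the coupling condition, which works but degrades $c_0$ and hence would change $C_1$ and $C_2$. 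Since the theorem asserts specific constants, you should adopt the first fix.
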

The proof of Theorem \ref{thm4.9} depends on the refined estimate of the fully discrete Green function and a discrete probability density function for the discrete renewal method.
\begin{lemma}\label{lemma4.10}
For $0\leq\theta<1$ with $n^2\tau<\frac{1}{8(1-\theta)}$ or $\theta=1,$ we have
\begin{align*}
\int_0^1\left(G^{n,\tau}_2(t,\kappa_n(x),y)\right)^2\,dy\ge \frac{1-\exp\left\{-4n^2\pi^2\left(\left[\frac{t}{\tau}\right]+1\right)\tau\right\}}{8\sqrt{\pi\left(\left[\frac{t}{\tau}\right]+1\right)\tau}}.
\end{align*}
\end{lemma}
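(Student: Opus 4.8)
The plan is to mimic the argument of Lemma \ref{lemma 5.1}, exploiting the fact that $|e^n_j(\kappa_n(x))|^2=1$ for every $j$, so that from Lemma \ref{lemma4.1} $(\romannumeral2)$ we get the exact identity
\begin{align*}
\int_0^1\left(G^{n,\tau}_2(t,\kappa_n(x),y)\right)^2\,dy=\sum_{j=0}^{n-1}(R_{1,j}R_{2,j})^{2\left[\frac{t}{\tau}\right]}R^2_{1,j}.
\end{align*}
First I would discard all but the terms with $0\le j\le \left[\frac n2\right]$, or even just $0\le j\le \frac n2$, keeping only positive contributions (every summand is nonnegative). Since $R_{1,j}=(1-\theta\tau\lambda^n_j)^{-1}\ge 1$ and $R_{1,j}R_{2,j}=(1+R_{3,j})^{-1}$ with $R_{3,j}=-\lambda^n_j\tau/(1+(1-\theta)\tau\lambda^n_j)>0$, one has the clean lower bound $(R_{1,j}R_{2,j})^{2\left[\frac t\tau\right]}R^2_{1,j}\ge (R_{1,j}R_{2,j})^{2\left[\frac t\tau\right]+2}=(1+R_{3,j})^{-2\left[\frac t\tau\right]-2}$. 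The next step is to bound $R_{3,j}$ from above in terms of $j^2\tau$: using $-\lambda^n_j\tau=4n^2\tau\sin^2(j\pi/n)\le 4\pi^2 j^2\tau$ and, in the denominator, $1+(1-\theta)\tau\lambda^n_j\ge 1-4(1-\theta)n^2\tau\sin^2(j\pi/n)\ge 1-4(1-\theta)j^2\tau\pi^2\cdot(\text{const})$, which is bounded below by a positive constant precisely under the hypothesis $n^2\tau<\frac{1}{8(1-\theta)}$ (the case $\theta=1$ needing no such restriction since then the denominator is simply $1$). This yields $R_{3,j}\le c\, j^2\tau$ with an absolute $c$; tracking constants carefully so the final bound matches $4n^2\pi^2$ is where the bookkeeping lives.

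With $R_{3,j}\le c j^2\tau$ in hand, I would use $(1+R_{3,j})^{-2\left[\frac t\tau\right]-2}\ge e^{-(2\left[\frac t\tau\right]+2)R_{3,j}}\ge e^{-c'(\left[\frac t\tau\right]+1)j^2\tau}$ (from $\log(1+x)\le x$), and then convert the resulting sum into an integral: $\sum_{j\ge 0}e^{-a j^2}\ge \int_0^{\infty}e^{-az^2}\,dz=\frac12\sqrt{\pi/a}$ is the naive estimate, but to recover the finite-$t$ factor $1-\exp\{-4n^2\pi^2(\left[\frac t\tau\right]+1)\tau\}$ I would instead truncate the integral at $z=n/2$ exactly as in Lemma \ref{lemma 5.1}, square-and-unfold the one-dimensional integral into a two-dimensional one over $[-n/2,n/2]^2$, pass to polar coordinates, and integrate the radial part $\int_0^{n/2}e^{-ar^2}r\,dr=\frac{1-e^{-a n^2/4}}{2a}$. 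Choosing the exponent $a=c'(\left[\frac t\tau\right]+1)\tau$ so that $a n^2/4=4n^2\pi^2(\left[\frac t\tau\right]+1)\tau$ forces the constant $c'$ to be $16\pi^2$, i.e. the upper bound $R_{3,j}\le 4\pi^2 j^2\tau$ must be sharpened/verified to exactly $R_{3,j}\le 4\pi^2 j^2\tau\cdot(2\left[\frac t\tau\right]+2)^{-1}\cdot(\ldots)$ — more precisely one wants $(2\left[\frac t\tau\right]+2)R_{3,j}\le 8\pi^2 j^2(\left[\frac t\tau\right]+1)\tau$, i.e. $R_{3,j}\le 4\pi^2 j^2\tau$, which is exactly $-\lambda^n_j\tau\le 4\pi^2 j^2\tau$ times a denominator factor $\ge 1$; the case analysis on whether $1+(1-\theta)\tau\lambda^n_j\ge 1$ (only for $\theta\le1$ with the stated smallness) versus just being positive is the place where the hypothesis is used.

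The main obstacle I anticipate is purely one of constant-chasing: making the denominator $1+(1-\theta)\tau\lambda^n_j$ behave well. Unlike the semi-discrete case where $e^{2\lambda^n_j t}$ is manifestly of the form $e^{-c j^2 t}$ with $c\ge$ a clean constant, here $R_{1,j}R_{2,j}$ can a priori be small or even negative, so one must first invoke Assumption \ref{Assumption3} (via $n^2\tau<\frac{1}{8(1-\theta)}$, which is a strengthening compatible with Cases 1 and 2, and vacuous-like for $\theta=1$) to guarantee $1+(1-\theta)\tau\lambda^n_j$ stays bounded below by, say, $\frac12$ for all $j\le n/2$; then $R_{3,j}=-\lambda^n_j\tau/(1+(1-\theta)\tau\lambda^n_j)\le -2\lambda^n_j\tau\le 8\pi^2 j^2\tau$ — note a factor of $2$ appears, so one must be slightly more careful and use $-\lambda^n_j\tau\le 4j^2\tau\cdot(\pi/2)^2/((j\pi/n)^2/\sin^2)$, i.e. the bound $\sin^2(j\pi/n)\le (j\pi/n)^2$ giving $-\lambda^n_j=4n^2\sin^2(j\pi/n)\le 4\pi^2 j^2$, and then choose the lower bound on the denominator sharp enough (closer to $1$, which is legitimate by shrinking the range of $j$ or using a better Taylor estimate) so that the product still yields $R_{3,j}\le 4\pi^2 j^2\tau$ exactly. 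Once the exponent constant is pinned to $16\pi^2$, the polar-coordinate computation reproduces the stated $\frac{1-\exp\{-4n^2\pi^2(\left[t/\tau\right]+1)\tau\}}{8\sqrt{\pi(\left[t/\tau\right]+1)\tau}}$, the extra $\frac18$ (versus the $\frac1{\sqrt{32\pi}}$ of Lemma \ref{lemma 5.1}) absorbing the difference between the exponents $8j^2\pi^2$ there and $16j^2\pi^2$-type here.
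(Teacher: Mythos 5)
Your plan follows the paper's proof essentially step for step: restrict the exact $L^2$-identity of Lemma \ref{lemma4.1} $(\romannumeral2)$ at the grid points (where $|e^n_j(\kappa_n(x))|^2=1$) to $0\le j\le\left[\frac n2\right]$, reduce each summand to $(1+R_{3,j})^{-2\left[\frac t\tau\right]-2}$, apply $\ln(1+z)\le z$, bound $R_{3,j}$ by a multiple of $j^2\tau$ using the hypothesis on $n^2\tau$, and finish with the truncated Gaussian sum and the polar-coordinate computation of Lemma \ref{lemma 5.1}. This is exactly what the paper does.

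Two bookkeeping corrections, one of which removes the ``main obstacle'' you anticipate. First, $R_{1,j}=(1-\theta\tau\lambda^n_j)^{-1}\le 1$, not $\ge 1$ (the denominator is $\ge 1$ since $\lambda^n_j\le 0$); the inequality you actually need, $R^2_{1,j}\ge(R_{1,j}R_{2,j})^2$, follows from $0<R_{2,j}\le 1$, so the conclusion of that step stands. Second, the target constant is off by a factor of $2$: to produce the summand $\exp\{-16\pi^2j^2(\left[\frac t\tau\right]+1)\tau\}$ --- which is precisely what the polar computation with radius $\frac n2$ converts into $\bigl(1-\exp\{-4n^2\pi^2(\left[\frac t\tau\right]+1)\tau\}\bigr)/\bigl(8\sqrt{\pi(\left[\frac t\tau\right]+1)\tau}\bigr)$ --- you need $2(\left[\frac t\tau\right]+1)R_{3,j}\le 16\pi^2j^2(\left[\frac t\tau\right]+1)\tau$, i.e. $R_{3,j}\le 8\pi^2j^2\tau$, not $4\pi^2j^2\tau$. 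Since the hypothesis $n^2\tau<\frac{1}{8(1-\theta)}$ gives $1+(1-\theta)\tau\lambda^n_j>\frac12$ and hence $R_{3,j}\le -2\lambda^n_j\tau=8n^2\tau\sin^2(j\pi/n)\le 8\pi^2j^2\tau$ (with the trivial denominator $1$ when $\theta=1$), the bound you already have is exactly sufficient; no sharpening of the denominator toward $1$ is needed, and none is available under the stated hypothesis for general $\theta<1$. With that correction your argument coincides with the paper's proof.
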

\begin{proof}
Under conditions in Lemma \ref{lemma4.10}, we have $\frac{1}{2}<R_{2,j}\leq1,R_{3,j}<-2\lambda^n_j\tau$, hence,
\begin{align*}
&\int_0^1\left(G^{n,\tau}_2(t,\kappa_n(x),y)\right)^2\,dy
\ge\sum_{j=0}^{\left[\frac{n}{2}\right]}\left(1+R_{3,j}\right)^{-2\left[\frac{t}{\tau}\right]-2}(R_{2,j})^{-2}\\
\ge& \sum_{j=0}^{\left[\frac{n}{2}\right]}\exp\left\{-2\left(\left[\frac{t}{\tau}\right]+1\right)\ln\left(1+R_{3,j}\right)\right\}
\ge \sum_{j=0}^{\left[\frac{n}{2}\right]}\exp\left\{-2\left(\left[\frac{t}{\tau}\right]+1\right)R_{3,j}\right\}\\
\ge &\sum_{j=0}^{\left[\frac{n}{2}\right]}\exp\left\{4\tau\lambda^n_j\left(\left[\frac{t}{\tau}\right]+1\right)\right\}
\ge \sum_{j=0}^{\left[\frac{n}{2}\right]}\exp\left\{-16j^2\pi^2\left(\left[\frac{t}{\tau}\right]+1\right)\tau\right\}\\
\ge&\frac{1-\exp\left\{-4n^2\pi^2\left(\left[\frac{t}{\tau}\right]+1\right)\tau\right\}}{8\sqrt{\pi\left(\left[\frac{t}{\tau}\right]+1\right)\tau}},
\end{align*}
where in the last line we use the same technique as in Lemma \ref{lemma 5.1}.
The proof is finished.
\end{proof}

\begin{lemma}\label{lemmagr}
Let $\tilde{g}(r):=\tilde{b}e^{-\pi\mu^2\tilde{b}^2r\tau}\frac{1-e^{-4n^2\pi^2r\tau}}{\sqrt{r\tau}}\tau$, where $\tilde{b}:=\frac{\lambda^2J^2_0}{8\sqrt{\pi}}$. Suppose that $n,\tau$ satisfy $\frac{J^4_0n^2\tau}{16\pi\zeta^2}+16\pi n^2\tau< 1$ and $n\ge \zeta\lambda^2$ for some $\zeta>0$, then $\left\{\tilde{g}(r)\right\}_{r\ge 1}$ is a discrete probability density function with some suitable $\mu\ge \frac{16\pi\zeta}{J^2_0+32\pi\zeta}>0$.
\end{lemma}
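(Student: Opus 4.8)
The plan is to follow the continuous argument of Lemma \ref{lemmag}, but since $\{\tilde{g}(r)\}_{r\ge1}$ is a sequence rather than a function, the single integral identity used there must be replaced by a two–sided comparison between Riemann sums and Gamma–function integrals.

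First I would record the structural facts. Each $\tilde{g}(r)$ is non-negative because $1-e^{-4n^2\pi^2 r\tau}>0$ for $r\ge1$. Set $S(\mu):=\sum_{r\ge1}\tilde{g}(r)$ and observe the splitting $\tilde{g}(r)=f_1(r\tau)\tau-f_2(r\tau)\tau$, where $f_1(s):=\tilde{b}\,e^{-\pi\mu^2\tilde{b}^2 s}s^{-1/2}$ and $f_2(s):=\tilde{b}\,e^{-(\pi\mu^2\tilde{b}^2+4n^2\pi^2)s}s^{-1/2}$ are, for every $\mu>0$, strictly decreasing and integrable on $(0,\infty)$. Monotone convergence gives $S(0^+)=\tilde{b}\sqrt{\tau}\sum_{r\ge1}(1-e^{-4n^2\pi^2 r\tau})r^{-1/2}=+\infty$, since the tail is comparable to $\sum r^{-1/2}$, whereas $S(\mu)\to0$ as $\mu\to\infty$; moreover $S$ is continuous and strictly decreasing on $(0,\infty)$. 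By the intermediate value theorem there is a unique $\mu=\mu(n,\tau)>0$ with $S(\mu)=1$, which is exactly the statement that $\tilde{g}$ is a discrete probability density.

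Next I would bound this $\mu$ from below. For a positive decreasing $f$ on $(0,\infty)$ one has $\int_\tau^\infty f\le\sum_{r\ge1}f(r\tau)\tau\le\int_0^\infty f$; applying the left inequality to $f_1$, the right inequality to $f_2$, together with $\int_0^\infty s^{-1/2}e^{-\alpha s}\,ds=\sqrt{\pi/\alpha}$, I obtain
\begin{align*}
S(\mu)\ \ge\ \frac{1}{\mu}-\tilde{b}\int_0^\tau s^{-1/2}\,ds-\frac{\tilde{b}}{\sqrt{\mu^2\tilde{b}^2+4n^2\pi}}\ \ge\ \frac{1}{\mu}-2\tilde{b}\sqrt{\tau}-\frac{\tilde{b}}{2n\sqrt{\pi}}.
\end{align*}
Since $\tilde{b}=\lambda^2 J_0^2/(8\sqrt{\pi})$ and $n\ge\zeta\lambda^2$, the last term equals $\lambda^2 J_0^2/(16\pi n)\le J_0^2/(16\pi\zeta)$, while $(2\tilde{b}\sqrt{\tau})^2=\lambda^4 J_0^4\tau/(16\pi)\le J_0^4 n^2\tau/(16\pi\zeta^2)<1$ by the assumed coupling, so $2\tilde{b}\sqrt{\tau}<1$. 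Evaluating the displayed bound at $\mu_0:=16\pi\zeta/(J_0^2+32\pi\zeta)$, for which $1/\mu_0=2+J_0^2/(16\pi\zeta)$, the two subtracted terms are absorbed and $S(\mu_0)\ge 2-2\tilde{b}\sqrt{\tau}>1$. As $S$ is strictly decreasing and $S(\mu)=1$, this forces $\mu>\mu_0$, i.e. $\mu\ge 16\pi\zeta/(J_0^2+32\pi\zeta)$, which is the asserted lower bound.

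The step I expect to be the main obstacle is the passage from the continuous to the discrete setting: $\tilde{g}(r)$ is not monotone in $r$ (it vanishes near $r=0$ and decays at infinity), so no single integral comparison is available, and the remedy is the $f_1-f_2$ decomposition into two genuinely decreasing pieces. After that one must track constants precisely enough that the error $2\tilde{b}\sqrt{\tau}$ produced by discarding $\int_0^\tau f_1$ is absorbed exactly by the hypothesis $J_0^4 n^2\tau/(16\pi\zeta^2)+16\pi n^2\tau<1$; the remaining summand $16\pi n^2\tau<1$ is not needed here but is what guarantees the applicability of Lemma \ref{lemma4.10} in the subsequent proof of Theorem \ref{thm4.9}.
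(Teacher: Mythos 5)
Your proof is correct and follows essentially the same route as the paper: the same Riemann-sum versus integral comparisons for the two decreasing pieces $f_1,f_2$, the same $2\tilde b\sqrt{\tau}$ error term absorbed via the coupling hypothesis and $n\ge\zeta\lambda^2$, and the same evaluation at $\mu_0=16\pi\zeta/(J_0^2+32\pi\zeta)$. The only (harmless) difference is that you get existence of the root from strict monotonicity of $S$ together with $S(0^+)=\infty$ and $S(\infty)=0$, whereas the paper checks a sign change of $\tilde h$ on $[\varepsilon,1]$ --- which is where it uses the $16\pi n^2\tau<1$ half of the hypothesis to obtain $\tilde h(1)<0$, and which additionally pins down $\mu<1$, a fact invoked later in the proof of Theorem \ref{thm4.9}.
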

\begin{proof}
 It suffices to find some constant $\mu>0$ to be the zero point of the function
\begin{align}\label{sec5eq14}
\tilde{h}(\mu):=\frac{1}{\tilde{b}}\Big(\sum_{r=1}^{\infty}\tilde{g}(r)-1\Big)= 
\sum_{r=1}^{\infty}\frac{e^{-\pi\mu^2\tilde{b}^2r\tau}}{\sqrt{r\tau}}\tau-\sum_{r=1}^{\infty}\frac{e^{-(\pi\mu^2\tilde{b}^2+4n^2\pi^2)r\tau}}{\sqrt{r\tau}}\tau-\frac{1}{\tilde{b}}.
\end{align}
On the one hand, 
\begin{align}\label{sec5eq8}
\sum_{r=1}^{\infty}\frac{e^{-\pi\mu^2\tilde{b}^2r\tau}}{\sqrt{r\tau}}\tau\leq \int_0^{\infty}\frac{e^{-\pi\mu^2\tilde{b}^2z\tau}}{\sqrt{z}}\sqrt{\tau}\,dz=\frac{1}{\mu \tilde{b}}.
\end{align}
On the other hand,
\begin{align}\label{sec5eq9}
&\sum_{r=1}^{\infty}\frac{e^{-\pi\mu^2\tilde{b}^2r\tau}}{\sqrt{r\tau}}\tau\ge \int_1^{\infty}\frac{e^{-\pi\mu^2\tilde{b}^2z\tau}}{\sqrt{z}}\sqrt{\tau}\,dz\\
=&\;\int_0^{\infty}\frac{e^{-\pi\mu^2\tilde{b}^2z\tau}}{\sqrt{z}}\sqrt{\tau}\,dz-\int_0^{1}\frac{e^{-\pi\mu^2\tilde{b}^2z\tau}}{\sqrt{z}}\sqrt{\tau}\,dz\ge \frac{1}{\mu \tilde{b}}-2\sqrt{\tau}.
\end{align}
Similarly, we have
\begin{align}\label{sec5eq10}
\sqrt{\frac{1}{\mu^2\tilde{b}^2+4n^2\pi}}-2\sqrt{\tau}\leq \sum_{r=1}^{\infty}\frac{e^{-(\pi\mu^2\tilde{b}^2+4n^2\pi^2)r\tau}}{\sqrt{r\tau}}\tau\leq \sqrt{\frac{1}{\mu^2\tilde{b}^2+4n^2\pi}}.
\end{align}
Combining \eqref{sec5eq8} \eqref{sec5eq9} and \eqref{sec5eq10}, we obtain
\begin{align*}
\frac{1}{\mu \tilde{b}}-2\sqrt{\tau}-\sqrt{\frac{1}{\mu^2\tilde{b}^2+4n^2\pi}}-\frac{1}{\tilde{b}}\leq h(\mu)\leq \frac{1}{\mu \tilde{b}}-\left(\sqrt{\frac{1}{\mu^2\tilde{b}^2+4n^2\pi}}-2\sqrt{\tau}\right)-\frac{1}{\tilde{b}}.
\end{align*}
Hence, for any $\varepsilon>0,$ the right hand side of \eqref{sec5eq14} converges uniformly to a continuous function on $\mu\in[\varepsilon,1]$, we still denote it by $\tilde{h}(\mu).$

 Because of $n\ge \zeta\lambda^2,$ by choosing $\varepsilon= \frac{1}{\sqrt{\frac{\tilde{b}^2}{4\pi\zeta^2\lambda^4}}+2}=\frac{16\pi\zeta}{J^2_0+32\pi\zeta}\leq\frac{1}{\sqrt{\frac{\tilde{b}^2}{4n^2\pi}}+2}$ that is independent of $\tilde{b},$ we have $$\left(\frac{1}{\varepsilon}-1\right)\ge\sqrt{\frac{\tilde{b}^2}{4n^2\pi}}+1>\sqrt{\frac{\tilde{b}^2}{4n^2\pi}}+2\tilde{b}\sqrt{\tau}>\sqrt{\frac{\tilde{b}^2}{\varepsilon^2\tilde{b}^2+4n^2\pi}}+2\tilde{b}\sqrt{\tau},$$ which yields $\tilde{h}(\varepsilon)>0$.
 
  Since $\frac{J^4_0n^2\tau}{16\pi\zeta^2}+16\pi n^2\tau< 1$ implies $1>4\tilde{b}^2\tau+16\pi n^2\tau$, so
$
\tilde{h}(1)\leq -\left(\sqrt{\frac{1}{\tilde{b}^2+4n^2\pi}}-2\sqrt{\tau}\right)< 0.
$
 Therefore, there is a $\mu:=\mu(n,\tau,\tilde{b})\in(\varepsilon,1)$ satisfying $\tilde{h}(\mu)=0$, and $\mu\ge\varepsilon=\frac{16\pi\zeta}{J^2_0+32\pi\zeta}$. The proof is finished.
\end{proof}
\noindent
\textbf{Proof of Theorem \ref{thm4.9}:}
\begin{proof}
Taking the second moment on both sides of \eqref{mild full} with space variable being $\kappa_n(x)$ and time variable being $m\tau$, combining Walsh isometry, Lemma \ref{lemma4.1} $(\romannumeral1)$ $(\romannumeral2)$ and Lemma \ref{lemma4.10}, we get
\begin{align*}
\mathbb{E}\left(\left|u^{n,\tau}(m\tau,\kappa_n(x))\right|^2\right)&\ge I^2_0+\frac{\lambda^2J^2_0}{8\sqrt{\pi}}\sum_{j=0}^{m-1}\frac{1-e^{-4n^2\pi^2(m-j)\tau}}{\sqrt{(m-j)\tau}}\inf_{y\in[0,1]}\mathbb{E}\left(\left|u^{n,\tau}(j\tau,\kappa_n(y))\right|^2\right)\tau.
\end{align*}
Taking infimum over $x\in[0,1],$ then multiplying both sides by $e^{-\pi\mu^2\tilde{b}^2m\tau}$ with $\tilde{b}=\frac{\lambda^2J^2_0}{8\sqrt{\pi}}$, we see that
\begin{align*}
M^{n,\tau}(m\tau):=e^{-\pi\mu^2\tilde{b}^2m\tau}\inf_{x\in[0,1]}\mathbb{E}\left(\left|u^{n,\tau}(m\tau,\kappa_n(x))\right|^2\right)
\end{align*}
satisfies
\begin{align*}
M^{n,\tau}(m\tau)\ge& e^{-\pi\mu^2\tilde{b}^2m\tau}I^2_0+\sum_{j=0}^{m-1}\tilde{b}e^{-\pi\mu^2\tilde{b}^2(m-j)\tau}\frac{1-e^{-4n^2\pi^2(m-j)\tau}}{\sqrt{(m-j)\tau}}M^{n,\tau}(j\tau)\tau\\
=&e^{-\pi\mu^2\tilde{b}^2m\tau}I^2_0+\sum_{j=1}^{m}\tilde{b}e^{-\pi\mu^2\tilde{b}^2j\tau}\frac{1-e^{-4n^2\pi^2j\tau}}{\sqrt{j\tau}}M^{n,\tau}((m-j)\tau)\tau.
\end{align*}
 By Lemma \ref{lemmagr}, $\tilde{g}(r)=\tilde{b}e^{-\pi\mu^2\tilde{b}^2r\tau}\frac{1-e^{-4n^2\pi^2r\tau}}{\sqrt{r\tau}}\tau$ is a discrete probability density function. Hence, applying the discrete Renewal Theorem (see \cite[Theorem 8.5.13]{AKLS06}) and the discrete version of \cite[Theorem 7.11]{KD14} lead to 
\begin{align*}
\liminf_{m\rightarrow\infty}M^{n,\tau}(m\tau)\ge \frac{I^2_0\sum_{r=0}^{\infty}e^{-\pi\mu^2\tilde{b}^2r\tau}}{\sum_{r=1}^{\infty}r \tilde{g}(r)}\ge \frac{I^2_0\int_0^{\infty}e^{-\pi\mu^2\tilde{b}^2z\tau}\,dz}{\int_0^{\infty}\tilde{b}e^{-\pi\mu^2\tilde{b}^2z\tau}\sqrt{z\tau}\,dz+1/(\mu\sqrt{2e\pi})}:=d>0,
\end{align*}
where $\frac{1}{\mu\sqrt{2e\pi}}$ is the maximum of $\tilde{b}e^{-\pi\mu^2\tilde{b}^2z\tau}\sqrt{z\tau}$ for $z\ge 0.$ Therefore, there is a $T:=T(n,\tau)>0$, such that 
\begin{align*}
\inf_{x\in[0,1]}\mathbb{E}\left(\left|u^{n,\tau}(m\tau,\kappa_n(x))\right|^2\right)\ge \frac{d}{2}e^{\pi\mu^2\tilde{b}^2m\tau}, \quad \forall \;m\tau>T.
\end{align*}
It follows from $\mu^2\tilde{b}^2\tau\leq1$ that $$\frac{1}{\mu\sqrt{2e\pi}}\leq \sqrt{\frac{\pi}{2e}}\int_0^{\infty}\tilde{b}e^{-\pi\mu^2\tilde{b}^2z\tau}\sqrt{z\tau}\,dz=\sqrt{\frac{\pi}{2e}}\times\frac{1}{2\pi\mu^3\tilde{b}^2\tau},$$ so we have $d\ge \frac{2\mu I^2_0}{1+\sqrt{\frac{\pi}{2e}}}$. Moreover, Lemma \ref{lemmagr} implies $\mu^2\tilde{b}^2\ge\varepsilon^2\tilde{b}^2=\frac{4\pi\zeta^2J^4_0\lambda^4}{(J^2_0+32\pi\zeta)^2.}$
Hence, the proof of the theorem is completed.
\end{proof} 
Theorem \ref{thm4.9} and Proposition \ref{prop 3.2} imply that the second moment of the numerical solution of the fully discrete scheme has sharp exponential order $C\lambda^4t_m$ for large $n$ and $t_m=m\tau$.
\begin{remark}
As for the exponential integrator method, the second moment of the numerical solution also has sharp exponential order $C\lambda^4t$ for large $n$ and $t$.
\end{remark}
\subsection{Error estimations of fully discrete scheme}\label{sec4.4}
In this subsection, we show the convergence of the fully discrete scheme. It is based on the error estimates on the fully discrete Green functions and the exact one, whose proofs are postponed to Appendix \ref{aplemma4.8}. In the sequel, we assume $n\ge 3,\; 0<\tau<1$.
\begin{lemma}\label{lemma4.8}
(\romannumeral1) Under conditions in Assumption \ref{Assumption3}, 
there is a constant $C:=C(\theta)>0$ such that for all $x\in[0,1]$,
\begin{align}\label{estimate}
\int_{0}^{\infty}\int_{0}^{1}\left|G(t,x,y)-G^{n,\tau}_2(t,x,y)\right|^2\,dydt\leq C\left(\frac{1}{n}+\sqrt{\tau}\right).
\end{align}
(\romannumeral2) Under conditions in Assumption \ref{Assumption3} (\romannumeral1) or Assumption \ref{Assumption3} (\romannumeral2) or $\theta=1$ or $\theta\in (\frac{1}{2},1)$ with $u_0\in\mathcal{C}^2([0,1])$,
for any $\frac{1}{2}<\alpha <2$, there is a positive constant $C:=C(\alpha,\theta)$ such that 
\begin{align}\label{green1}
\left|\int_0^1\left(G^n(t,x,y)-G^{n,\tau}_1(t,x,y)\right)u_0(\kappa_n(y))\,dy\right|^2\leq C\tau^{\alpha-\frac{1}{2}}\left(\left[\frac{t}{\tau}\right]\tau\right)^{-\alpha}
\end{align}
for all $x\in[0,1], \;t\ge \tau>0$.
\end{lemma}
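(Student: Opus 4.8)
The common engine for both parts is to pass to the spectral representations of the Green functions and use the grid-orthogonality $\int_0^1\bar e_j(\kappa_n(y))e_l(\kappa_n(y))\,dy=\delta_{jl}$ for $0\le j,l\le n-1$ (the identity already underlying Lemmas~\ref{lemma 2.1} and~\ref{lemma4.1}), together with $|e^n_j(x)|\le1$, so that every $L^2$-norm below collapses into a scalar sum over the Fourier modes $j$. Write $\mu_j:=\lambda^n_j=-4n^2\sin^2(j\pi/n)$, so that $16j^2\le|\mu_j|\le4\pi^2j^2$ for $1\le j\le[n/2]$, and $q_j:=R_{1,j}R_{2,j}=\frac{1+(1-\theta)\mu_j\tau}{1-\theta\mu_j\tau}$, with $q_0=R_{1,0}=1$. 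The only structural facts about the $\theta$-scheme that I would use are the elementary local-error bound $|e^{\mu_j\tau}-q_j|\le C\min\{1,(|\mu_j|\tau)^2\}$, the stability bound $\rho_j:=\max\{e^{\mu_j\tau},|q_j|\}\le1-c\min\{1,|\mu_j|\tau\}$ (hence $\rho_j\le e^{-c|\mu_j|\tau}$ when $|\mu_j|\tau\le1$), and the fact that $|q_j|\le\rho_*<1$ uniformly once $|\mu_j|\tau$ is bounded below; these all hold under Assumption~\ref{Assumption3} and are essentially the content of the $A_1/A_2$ splitting in the proof of Lemma~\ref{lemma4.1}(iii) (for $\theta=1$ one even has the sharper $\rho_j\le(1+|\mu_j|\tau)^{-1}$).

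For part~(i) I would first insert the semi-discrete Green function, $\|G-G^{n,\tau}_2\|^2\le2\|G-G^n\|^2+2\|G^n-G^{n,\tau}_2\|^2$ in $L^2((0,\infty)\times(0,1))$, and dispose of the first term by Lemma~\ref{lemma3.3}(i); it remains to show $\|G^n-G^{n,\tau}_2\|^2\le C\sqrt\tau$. Split the $t$-integral at $\tau$. On $(0,\tau)$ the crude bound $|G^n-G^{n,\tau}_2|^2\le2|G^n|^2+2|G^{n,\tau}_2|^2$ together with $\int_0^1|G^n(t,x,y)|^2\,dy\le1+\sqrt{\pi/8t}$ (Lemma~\ref{lemma 2.1}(iii)) and $\int_0^1|G^{n,\tau}_2(t,x,y)|^2\,dy\le1+C/\sqrt\tau$ on $[0,\tau)$ (Lemma~\ref{lemma4.1}(iii)) integrates to $C\sqrt\tau$. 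On $(\tau,\infty)$, orthogonality bounds the integral by $\sum_{j=0}^{n-1}\sum_{m\ge1}\int_0^\tau|e^{\mu_j(m\tau+s)}-q_j^mR_{1,j}|^2\,ds$; writing $e^{\mu_j(m\tau+s)}-q_j^mR_{1,j}=e^{\mu_jm\tau}(e^{\mu_js}-1)+((e^{\mu_j\tau})^m-q_j^m)-q_j^m(R_{1,j}-1)$, I would bound the three summands by $e^{\mu_jm\tau}\min\{1,|\mu_j|s\}$, by $m\rho_j^{m-1}|e^{\mu_j\tau}-q_j|$ (telescoping), and by $|q_j|^m\min\{1,\theta|\mu_j|\tau\}$ respectively, then carry out the $s$- and $m$-sums, and finally split $\sum_j$ into low modes $|\mu_j|\tau\le1$, where $1-\rho_j^2\gtrsim|\mu_j|\tau$ combines with the local error to leave $\sum_{j\lesssim\tau^{-1/2}}|\mu_j|\tau^2\lesssim\tau^2\sum_{j\lesssim\tau^{-1/2}}j^2\lesssim\sqrt\tau$, and high modes $|\mu_j|\tau>1$, where the bare factor $e^{2\mu_j\tau}\le e^{-32j^2\tau}$ is summable, $\tau\sum_j e^{-32j^2\tau}\lesssim\sqrt\tau$. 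Throughout, when $\theta\le\tfrac12$ one keeps the bookkeeping $n^2\tau\le r\Rightarrow n\sqrt\tau\le\sqrt r$; when $\theta>\tfrac12$ the high-mode decay comes entirely from $e^{-2|\mu_j|\tau}$.

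For part~(ii), the same Parseval step applied to $u_0(\kappa_n(\cdot))$, using $\sum_j|\widehat{u_0}(j)|^2=\tfrac1n\sum_k|u_0(k/n)|^2\le\|u_0\|_\infty^2$ with $\widehat{u_0}(j):=\int_0^1\bar e_j(\kappa_n(y))u_0(\kappa_n(y))\,dy$, gives $\big|\int_0^1(G^n-G^{n,\tau}_1)u_0(\kappa_n(y))\,dy\big|^2\le\|u_0\|_\infty^2\sum_{j=0}^{n-1}|e^{\mu_jt}-q_j^{[t/\tau]}|^2$. With $m=[t/\tau]\ge1$, $t\in[m\tau,(m+1)\tau)$, I would split $e^{\mu_jt}-q_j^m=e^{\mu_jm\tau}(e^{\mu_j(t-m\tau)}-1)+((e^{\mu_j\tau})^m-q_j^m)=:P_1+P_2$, with $|P_1|\le e^{\mu_jm\tau}\min\{1,|\mu_j|\tau\}$ (no $m$-factor) and $|P_2|\le Cm\rho_j^{m-1}\min\{1,(|\mu_j|\tau)^2\}$. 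For the low modes $|\mu_j|\tau\le1$ I would set $\alpha=2\beta+\tfrac12$, use $\min\{1,(|\mu_j|\tau)^2\}\le(|\mu_j|\tau)^{2\beta}$ and $\rho_j\le e^{-c|\mu_j|\tau}$, and apply the comparison $\sum_j j^{2\nu}e^{-aj^2}\asymp\int_0^\infty z^{2\nu}e^{-az^2}\,dz\asymp a^{-\nu-1/2}$ (legitimate since $|\mu_j|\asymp j^2$): this turns $\sum_j|P_1|^2$ into $\lesssim\tau^{2\beta}(m\tau)^{-2\beta-1/2}=\tau^{\alpha-1/2}(m\tau)^{-\alpha}$, and the extra polynomial factor $m^2$ in $\sum_j|P_2|^2$ is absorbed because $P_2$ carries two powers of the local error, whence $\sum_j|P_2|^2\lesssim m^2\tau^4((m-1)\tau)^{-9/2}\lesssim m^{-5/2}\tau^{-1/2}\lesssim\tau^{\alpha-1/2}(m\tau)^{-\alpha}$ for every $\alpha<\tfrac52$ (in particular $\alpha<2$). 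For the high modes $|\mu_j|\tau>1$ I would peel off one factor $e^{-c|\mu_j|\tau}$ (summable, $\lesssim\tau^{-1/2}$) and bound the remainder by $e^{-c(m-1)}\lesssim m^{-\alpha}$. When $\theta\in(\tfrac12,1)$, so that the extra hypothesis $u_0\in\mathcal{C}^2([0,1])$ is in force, the high modes are treated directly instead: $|\widehat{u_0}(j)|\le Cj^{-2}$, for such $j$ one has $|e^{\mu_jt}-q_j^m|\le e^{-|\mu_j|m\tau}+\rho_*^m\le Ce^{-cm}$, and $\sum_j|\widehat{u_0}(j)|\le C$, so the high-mode contribution to the left-hand side is $\le Ce^{-cm}$, again dominated by $\tau^{\alpha-1/2}(m\tau)^{-\alpha}$ since $\tau<1$.

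The principal obstacle, and the reason for the case distinction in the hypotheses of part~(ii), is precisely this high-mode behaviour of $q_j=R_{1,j}R_{2,j}$: for $\theta\in(\tfrac12,1)$ and without a CFL coupling one has $q_j\to-(1-\theta)/\theta\ne0$ as $|\mu_j|\tau\to\infty$, so on the high modes neither $e^{\mu_jt}$ nor $q_j^m$ decays in $j$, and the naive estimate $\sum_{j\text{ high}}|q_j|^{2m}\lesssim n\rho_*^{2m}$ carries an uncontrolled factor $n$; the hypothesis $u_0\in\mathcal{C}^2$ removes it by making $\sum_j|\widehat{u_0}(j)|$ finite uniformly in $n$. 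For $\theta=1$ one has instead the genuine decay $q_j\le(1+|\mu_j|\tau)^{-1}$, and for $\theta\le\tfrac12$ the CFL condition keeps $|\mu_j|\tau$ bounded and $n\sqrt\tau\le\sqrt r$, so no extra regularity is needed. The remaining work is the routine but somewhat tedious bookkeeping of geometric series, Gaussian-integral comparisons, and the uniformity of all constants.
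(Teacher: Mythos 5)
Your overall strategy coincides with the paper's: reduce to the semi-discrete Green function via Lemma~\ref{lemma3.3}(i), pass to the spectral representation so that everything becomes a mode-by-mode comparison of $e^{\lambda^n_j t}$ with $(R_{1,j}R_{2,j})^{[t/\tau]}$, split the modes according to whether $|\lambda^n_j|\tau$ is small or of order one (the paper's $A_1/A_2$), and, for $\theta\in(\tfrac12,1)$ in part~(ii), exploit $u_0\in\mathcal{C}^2$ through the $O(j^{-2})$ decay of its Fourier coefficients (the paper does this by integrating by parts twice and treating $u_0(\kappa_n(y))-u_0(y)$ separately; your direct bound on the discrete coefficient is an acceptable variant). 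Your diagnosis of why the case distinction is needed in part~(ii) is exactly the paper's reason. Part~(ii) of your sketch, including the exponent bookkeeping $\alpha=2\beta+\tfrac12$ and the absorption of the factor $m^2$ in the telescoped term, closes correctly for $\tfrac12<\alpha<2$.

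There is, however, one concrete gap in part~(i), precisely in the regime you yourself identify as dangerous in part~(ii): $\theta\in(\tfrac12,1)$ with no CFL coupling, so that $n^2\tau$ may be unbounded. Your middle summand $(e^{\mu_j\tau})^m-q_j^m$ contains a bare $q_j^m$, and on the high modes $|\mu_j|\tau>1$ one only has $|q_j|\le\rho_*<1$ with no decay in $j$ (indeed $q_j\to-(1-\theta)/\theta$); the resulting contribution is
\begin{align*}
\tau\sum_{m\ge1}\sum_{j\ \mathrm{high}}|q_j|^{2m}\;\lesssim\;n\tau,
\end{align*}
which is not $O(n^{-1}+\sqrt{\tau})$ (take, e.g., $n=\tau^{-3/4}$). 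The same loss occurs in your third summand $q_j^m(R_{1,j}-1)$, where $|R_{1,j}-1|$ is of order one on the high modes. The quantity actually being estimated is $e^{\mu_jt}-q_j^mR_{1,j}$, and it is the factor $R_{1,j}^2=(1-\theta\tau\lambda^n_j)^{-2}\le(1+16\theta j^2\tau)^{-2}$ that rescues the high-mode sum: keeping $q_j^mR_{1,j}$ intact there, the substitution $y=j\sqrt{\tau}$ gives $\sum_{j\ \mathrm{high}}R_{1,j}^2\lesssim\tau^{-1/2}$ and hence the contribution $\tau\cdot\tau^{-1/2}=\sqrt{\tau}$; this is exactly the paper's term $I_6$, where the difference is bounded crudely by $2e^{2\lambda^n_jt}+2(R_{1,j}R_{2,j})^{2[t/\tau]}R_{1,j}^2$ on $A_2$ without any decomposition. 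So the fix is simply not to apply your three-term splitting on the high modes; since your part~(ii) discussion shows you already understand the phenomenon, this is an oversight rather than a conceptual error, but as written the part~(i) argument does not close for $\theta\in(\tfrac12,1)$.
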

Based on Lemma \ref{lemma4.8}, we can establish the convergence theorem of the fully discrete scheme. The proof is omitted since it can be proved in a similar way as \cite{Gy99}.
\begin{thm}
 Under conditions in Lemma \ref{lemma4.8} (\romannumeral2), for every $0<\alpha<\frac{1}{4},0<\beta<\frac{1}{2},\;p\ge 1$ and for every $t\in(0,T]$ with the fixed $T>0,$ there is a constant $C:=C(\alpha,p,T)>0$ such that
\begin{align}\label{eqthm4.9}
\sup_{x\in[0,1]}\left\|u^{n,\tau}(t,x)-u(t,x)\right\|_{2p}\leq C(\tau^{-\alpha }+n^{-\beta }).
\end{align}
\end{thm}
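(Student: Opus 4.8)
The plan is to interpolate through the spatially semi-discrete solution $u^n$ of \eqref{mild fdm}: for $t=m\tau\in(0,T]$ and $x\in[0,1]$,
\[
\bigl\|u^{n,\tau}(t,x)-u(t,x)\bigr\|_{2p}
\le \bigl\|u^{n,\tau}(t,x)-u^n(t,x)\bigr\|_{2p}+\bigl\|u^n(t,x)-u(t,x)\bigr\|_{2p}.
\]
The second term is bounded by $Cn^{-\beta}$ for every $\beta<\tfrac12$ by the convergence theorem of Section~\ref{sec3.4}, so the whole argument reduces to estimating the temporal error $\sup_{x}\|u^{n,\tau}(t,x)-u^n(t,x)\|_{2p}$ and showing it is of order $\tau^{\alpha}$ for every $\alpha<\tfrac14$, uniformly in $x$ and in $n,\tau$.

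Subtracting \eqref{mild fdm} from \eqref{mild full} and using $u^n(0,\cdot)=u_0(\cdot)$, I would write the temporal error as a deterministic part $D(t,x):=\int_0^1(G^{n,\tau}_1-G^n)(t,x,y)\,u_0(\kappa_n(y))\,dy$ plus a stochastic part, and telescope the stochastic part into four Walsh integrals $A_1,\dots,A_4$ by successively replacing $G^{n,\tau}_2(t-\kappa_\tau(s)-\tau,x,y)$ by $G^n(t-\kappa_\tau(s)-\tau,x,y)$, then by $G^n(t-s,x,y)$, then replacing $\sigma(u^{n,\tau}(\kappa_\tau(s),\kappa_n(y)))$ by $\sigma(u^n(\kappa_\tau(s),\kappa_n(y)))$ and finally by $\sigma(u^n(s,\kappa_n(y)))$. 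Each $A_i$ is then estimated in $L^{2p}(\Omega)$ by the Burkholder--Davis--Gundy and Minkowski inequalities together with the global Lipschitz and linear growth of $\sigma$ and the $[0,T]$-uniform moment bounds $\sup_{t\le T}\sup_x\|u^n(t,x)\|_{2p}+\sup_{t\le T}\sup_x\|u^{n,\tau}(t,x)\|_{2p}\le C(T,p)$, which follow from the mild forms and the $n,\tau$-uniform integrable kernel bounds of Lemma~\ref{lemma 2.1} $(\romannumeral3)$ and Lemma~\ref{lemma4.1} $(\romannumeral3)$ (rather than from the cruder, $n$-dependent estimates of Propositions~\ref{proposition 2.2} and~\ref{prop 3.2}). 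Concretely, $D$ is controlled by Lemma~\ref{lemma4.8} $(\romannumeral2)$; $A_1$ by the space--time Green-function error $\int_0^{\infty}\!\int_0^1|G^n-G^{n,\tau}_2|^2\,dy\,dt\le C(n^{-1}+\sqrt\tau)$ coming from Lemma~\ref{lemma4.8} $(\romannumeral1)$ and Lemma~\ref{lemma3.3} $(\romannumeral1)$; $A_2$ by the $L^2_y$ time-modulus of $G^n$ read off from its spectral form, using $|s-\kappa_\tau(s)-\tau|\le\tau$ and $|\lambda^n_j|\simeq j^2$; $A_4$ by the H{\"o}lder-$\tfrac14$-in-time continuity of $u^n$ on $[0,T]$, itself a BDG/Green-function computation parallel to the one for the exact solution; and in $A_3$ one keeps $\sup_z\|u^{n,\tau}(\kappa_\tau(s),z)-u^n(\kappa_\tau(s),z)\|_{2p}$ inside the integral, paired with the weight $\int_0^1(G^n(t-s,x,y))^2\,dy\le 1+\sqrt{\pi/(8(t-s))}$ of Lemma~\ref{lemma 2.1} $(\romannumeral3)$.

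Writing $E(t):=\sup_{x\in[0,1]}\|u^{n,\tau}(t,x)-u^n(t,x)\|_{2p}^2$ for $t\in\tau\mathbb{Z}\cap(0,T]$ and collecting the bounds for $D,A_1,\dots,A_4$ (choosing the free exponents in Lemmas~\ref{lemma3.3} $(\romannumeral2)$ and~\ref{lemma4.8} $(\romannumeral2)$ so that the inhomogeneity degrades to $\tau^{2\alpha}+n^{-2\beta}$), one arrives at
\[
E(t)\le C\bigl(\tau^{2\alpha}+n^{-2\beta}\bigr)+C\lambda^2\int_0^t\Bigl(1+\tfrac{1}{\sqrt{t-s}}\Bigr)E(\kappa_\tau(s))\,ds,\qquad t\in\tau\mathbb{Z}\cap(0,T].
\]
This is a linear Gr{\"o}nwall inequality with an integrable (square-root) singular kernel and a piecewise-constant argument; iterating it over $[0,T]$ — e.g. on finitely many short subintervals on which the kernel carries small mass, exactly as in \cite{Gy99} — yields $E(t)\le C(\alpha,\beta,p,T)(\tau^{2\alpha}+n^{-2\beta})$, and taking square roots and combining with the space error gives \eqref{eqthm4.9}.

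The step I expect to be the main obstacle is keeping the temporal order at $\tfrac14$ through this last argument: the kernel $(t-s)^{-1/2}$ produced by applying BDG to the space--time white noise is only barely integrable, so the accumulation of the $O(t/\tau)$ one-step errors — the kernel-freezing error $A_2$ and the coefficient time-increment error $A_4$ must each contribute a clean $\tau^{1/4}$ and not a worse power — has to be organised with care; this is also where the behaviour of $G^n$, $G^{n,\tau}_i$ and $u^n$ near $t=0$ and the regularity hypotheses in Lemma~\ref{lemma4.8} $(\romannumeral2)$ (in particular $u_0\in\mathcal C^2([0,1])$ for $\theta\in(\tfrac12,1)$) enter, and why the final constant is allowed to depend on $T$.
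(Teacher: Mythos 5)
Your proposal is correct and follows essentially the route the paper intends: the paper omits this proof entirely, deferring to the argument of \cite{Gy99}, and your reconstruction --- interpolating through $u^n$, telescoping the mild-form difference into a deterministic term plus stochastic integrals controlled by Lemma \ref{lemma4.8} and the kernel bounds of Lemmas \ref{lemma 2.1} and \ref{lemma4.1}, and closing with a singular-kernel Gr{\"o}nwall iteration over $[0,T]$ --- is exactly that argument. (You also correctly read the claimed bound as $\tau^{\alpha}+n^{-\beta}$; the exponent $\tau^{-\alpha}$ in the displayed statement is evidently a typo.)
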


\section{Conclusions and future aspects}\label{conclusion}
In this paper, in order to investigate the numerical schemes that could inherit the weak intermittency of the SHE and preserve the sharp exponential order of the second moment of the exact solution, we implement an approach based on the compact integral form of the numerical scheme and the detailed analysis of the discrete Green function. It is shown that the semi-discrete scheme and the fully discrete scheme are both weakly intermittent. Furthermore, both of them could preserve the sharp exponential order of the second moment of the exact solution. In fact, there are still many problems that remains to be solved. We list several possible aspects for future work:
\begin{itemize}
\item[(1)] Is there a criterion that is easy to check, to judge whether a numerical scheme can inherit the weak intermittency of the original equation?
\item[(2)] If a numerical scheme can inherit the weak intermittency of the  original equation, how to estimate the error of the Lyaponuv exponents?
\end{itemize} 

The above two problems are very challenging. Generally, the expression of the discrete Green function for a numerical scheme can not be written explicitly, so it is difficult to analyze the detailed point-wise and integral estimates of the discrete Green function. We leave these problems as open problems, and attempt to study them in our future work.

\section{Appendix}\label{sec5}
\subsection{Proof of \eqref{mild fdm}}\label{A.2}
Using notations
\begin{align*}
[U^n(t)]_k=U^n_k(t):=u^n\Big(t,\frac{k}{n}\Big),\quad [W^n(t)]_k=W^n_k(t):=\sqrt{n}\left(W\Big(t,\frac{k+1}{n}\Big)-W\Big(t,\frac{k}{n}\Big)\right),
\end{align*}
it follows from \eqref{fdm} that 
\begin{align*}
[dU^n(t)-n^2DU^n(t)dt]_k=\lambda\sqrt{n}[{\rm diag }(\sigma(U^n_0(t)),\ldots,\sigma(U^n_{n-1}(t)))\, dW^n(t)]_k,
\end{align*}
where $D=(D_{ki})$ is an $n\times n$ matrix
\[
\left(\begin{array}{cccc}
-2 & 1 &    &1\\
 1 & -2 &\ddots &\\
    &\ddots &\ddots &1\\
  1 &         & 1&-2
  \end{array}\right).
\]
The eigenvalues of $n^2D$ are $\lambda^n_j:=-4n^2\sin^2\left(\frac{j\pi}{n}\right),j=0,1,\ldots, n-1$, and the corresponding complex eigenvectors are denoted by $f_j$, whose $k$th component is $\left[f_j\right]_k:=\frac{1}{\sqrt{n}}e^{2\pi \mathbf{i}j\frac{k}{n}},j,k=0,1,\ldots, n-1$. Denote $e_j(x):=e^{2\pi \mathbf{i}jx}$. Moreover, $f_j,j=0,1,\ldots, n-1$ form an orthogonal normal basis in $\mathbb{C}^n$ (see \cite{Green01}).

Simple computation yields
\begin{align}\label{obtain1}
\left[U^n(t)\right]_k=\left[e^{n^2Dt}U^n(0)\right]_k+\lambda\sqrt{n}\left[\int_{0}^{t}e^{n^2D(t-s)}{\rm diag} (\sigma(U^n_0(s)),\ldots,\sigma(U^n_{n-1}(s)))\, dW^n(s)\right]_k.
\end{align}
Note that
\begin{align}\label{obtain2}
&\left[e^{n^2Dt}U^n(0)\right]_k=\left[\sum_{j=0}^{n-1}a_je^{\lambda^n_j t}f_j\right]_k=\sum_{j=0}^{n-1}a_j\frac{1}{\sqrt{n}}e^{\lambda^n_j t}e_j\left(\frac{k}{n}\right)\nonumber\\
=&\sum_{j=0}^{n-1}\sum_{l=0}^{n-1}u_0\left(\frac{l}{n}\right)\frac{1}{n}e^{\lambda^n_j t}\bar{e}_j\left(\frac{l}{n}\right)e_j\left(\frac{k}{n}\right)
=\int_0^1G^n\Big(t,\frac{k}{n},y\Big)u_0(\kappa_n(y))\,dy,
\end{align}
where 
\begin{align*}
G^n\Big(t,\frac{k}{n},y\Big)=\sum_{j=0}^{n-1}e^{\lambda^n_jt}e_j\left(\frac{k}{n}\right)\bar{e}_j(\kappa_n(y)),\quad
u^n(0)=\sum_{j=0}^{n-1}a_jf_j,\quad
a_j=\sum_{l=0}^{n-1}u_0\left(\frac{l}{n}\right)\frac{1}{\sqrt{n}}\bar{e}_j\left(\frac{l}{n}\right).
\end{align*}
Similarly, 
\begin{align}\label{obtain3}
&\lambda\sqrt{n}\left[\int_{0}^{t}e^{n^2D(t-s)}diag (\sigma(U^n_0(s)),\ldots,\sigma(U^n_{n-1}(s)))\, dW^n(s)\right]_k\nonumber\\
= \,&\lambda\int_{0}^{t}\sum_{l=0}^{n-1}\sum_{j=0}^{n-1}\frac{1}{\sqrt{n}}e^{\lambda^n_j (t-s)}\sigma\left(U^n_l(s)\right)\bar{e}_j\left(\frac{l}{n}\right)e_j\left(\frac{k}{n}\right)\,dW^n_l(s)\nonumber\\
=\,&\lambda\int_0^t\int_0^1G^n\Big(t-s,\frac{k}{n},y\Big)\sigma(u^n(s,\kappa_n(y)))\,dW(s,y).
\end{align}
Combining equations \eqref{obtain1} \eqref{obtain2} and \eqref{obtain3}, we get
\begin{align}\label{variation}
u^n\Big(t,\frac{k}{n}\Big)=\int_0^1G^n\Big(t,\frac{k}{n},y\Big)u_0(\kappa_n(y))\,dy+\lambda\int_0^t\int_0^1G^n\Big(t-s,\frac{k}{n},y\Big)\sigma(u^n(s,\kappa_n(y)))\,dW(s,y).
\end{align}

We construct the continuous version of \eqref{variation} by the linear interpolation:
\begin{align*}
u^n(t,x):=u^n(t,\kappa_n(x))+(nx-n\kappa_n(x))\left[u^n\Big(t,\kappa_n(x)+\frac{1}{n}\Big)-u^n(t,\kappa_n(x))\right],\quad x\in[0,1].
\end{align*}
Denote
\begin{align*}
e^n_j(x):=e_j\left(\kappa_n(x)\right)+(nx-n\kappa_n(x))\left[e_j\Big(\kappa_n(x)+\frac{1}{n}\Big)-e_j(\kappa_n(x))\right],\quad x\in[0,1],
\end{align*}
then
\begin{align*}
G^n(t,x,y)=\sum_{j=0}^{n-1}e^{\lambda^n_j t}e^n_j(x)\bar{e}_j(\kappa_n(y)),\quad t\ge0,\;x,y\in[0,1].
\end{align*}
Obviously $u^n$ satisfies the equation
\begin{align*}
u^n(t,x)=\int_{0}^{1}G^n(t,x,y)u^n(0,\left(\kappa_n(y)\right)\,dy+\lambda\int_{0}^{t}\int_{0}^{1}G^n(t-s,x,y)\sigma\left(u^n(s,\kappa_n(y))\right)dW(s,y),
\end{align*}
almost surely for all $t\ge 0$ and $x\in [0,1]$. Hence we get \eqref{mild fdm}.

\subsection{Proof of Lemma \ref{lemma3.3}}\label{aplemma3.3}
\begin{proof}$(\romannumeral1)$  $G^n(t,x,y)$ can be rewritten as follows by expanding its real and imaginary parts:\\
If $n$ is odd,
\begin{align*}
G^n(t,x,y)=1+2\sum_{j=1}^{\left[\frac{n}{2}\right]}e^{\lambda^n_j t}\left(\varphi^n_{c,j}(x)\varphi_{c,j}(\kappa_n(y))+\varphi^n_{s,j}(x)\varphi_{s,j}(\kappa_n(y))\right).
\end{align*}
If $n$ is even,
\begin{align*}
G^n(t,x,y)=&1+2\sum_{j=1}^{\frac{n}{2}-1}e^{\lambda^n_j t}\left(\varphi^n_{c,j}(x)\varphi_{c,j}(\kappa_n(y))+\varphi^n_{s,j}(x)\varphi_{s,j}(\kappa_n(y))\right)\\
&+e^{\lambda^n_{\frac{n}{2}}t}\Big(\varphi^n_{c,\frac{n}{2}}(x)\varphi_{c,\frac{n}{2}}(\kappa_n(y))+\varphi^n_{s,\frac{n}{2}}(x)\varphi_{s,\frac{n}{2}}(\kappa_n(y))\\
&+\mathbf{i}\varphi^n_{s,\frac{n}{2}}(x)\varphi_{c,\frac{n}{2}}(\kappa_n(y))-\mathbf{i}\varphi^n_{c,\frac{n}{2}}(x)\varphi_{s,\frac{n}{2}}(\kappa_n(y))\Big)\\
=&1+2\sum_{j=1}^{\frac{n}{2}-1}e^{\lambda^n_j t}\left(\varphi^n_{c,j}(x)\varphi_{c,j}(\kappa_n(y))+\varphi^n_{s,j}(x)\varphi_{s,j}(\kappa_n(y))\right)\\&+e^{-4n^2t}\varphi^n_{c,\frac{n}{2}}(x)\varphi_{c,\frac{n}{2}}(\kappa_n(y)),
\end{align*}
where $\varphi_{c,j}(x):=\cos(2\pi jx),\varphi_{s,j}(x):=\sin(2\pi jx)$, and
\begin{align*}
&\varphi^n_{c,j}(x):=\varphi_{c,j}(\kappa_n(x))+(nx-n\kappa_n(x))\Big[\varphi_{c,j}\Big(\kappa_n(x)+\frac{1}{n}\Big)-\varphi_{c,j}(\kappa_n(x))\Big],\\
&\varphi^n_{s,j}(x):=\varphi_{s,j}(\kappa_n(x))+(nx-n\kappa_n(x))\Big[\varphi_{s,j}\Big(\kappa_n(x)+\frac{1}{n}\Big)-\varphi_{s,j}(\kappa_n(x))\Big].
\end{align*}

Let's rewrite the spectral decomposition of $G(t,x,y)$ by using notations above as follows:
\begin{align*}
G(t,x,y)&=1+2\sum_{j=1}^{\infty}e^{-4\pi^2j^2t}\cos(2\pi j(x-y))\\
&=1+2\sum_{j=1}^{\infty}e^{-4\pi^2j^2t}\left(\cos(2\pi jx)\cos(2\pi jy)+\sin(2\pi jx)\sin(2\pi jy)\right)\\
&=1+2\sum_{j=1}^{\infty}e^{-4\pi^2j^2t}\left(\varphi_{c,j}(x)\varphi_{c,j}(y)+\varphi_{s,j}(x)\varphi_{s,j}(y)\right).
\end{align*}

We first show the result by supposing $n$ is odd. It is clear that
\begin{align*}
I:=\int_{0}^{\infty}\int_{0}^{1}\left|G(t,x,y)-G^n(t,x,y)\right|^2\,dy\,dt\leq 8\sum_{k=1}^{4}(I^c_k+I^s_k),
\end{align*}
where
\begin{align*}
&I^c_1:=\int_{0}^{\infty}\int_{0}^{1}\Big|2\sum_{r=\left[\frac{n}{2}\right]+1}^{\infty}e^{-4\pi^2r^2t}\varphi_{c,r}(x)\varphi_{c,r}(y)\Big|^2\,dy\,dt,\\
&I^c_2:=\int_{0}^{\infty}\int_{0}^{1}\Big|2\sum_{r=1}^{\left[\frac{n}{2}\right]}e^{-4\pi^2r^2t}\varphi^n_{c,r}(x)\left(\varphi_{c,r}(y)-\varphi_{c,r}\left(\kappa_n(y)\right)\right)\Big|^2\,dy\,dt,\\
&I^c_3:=\int_{0}^{\infty}\int_{0}^{1}\Big|2\sum_{r=1}^{\left[\frac{n}{2}\right]}\left(e^{-4\pi^2r^2t}-e^{\lambda^n_r t}\right)\varphi^n_{c,r}(x)\varphi_{c,r}(\kappa_n(y))\Big|^2\,dy\,dt,\\
&I^c_4:=\int_{0}^{\infty}\int_{0}^{1}\Big|2\sum_{r=1}^{\left[\frac{n}{2}\right]}e^{-4\pi^2r^2t}\left(\varphi_{c,r}(x)-\varphi^n_{c,r}(x)\right)\varphi_{c,r}(y)\Big|^2\,dy\,dt,
\end{align*}
and $I^s_k,k=1,2,3,4$ are defined in a similar way via replacing $\cos(\cdot)$ by $\sin(\cdot)$.

Simple computation yields that when $n\ge 3$,
\begin{align*}
I^c_1&=4\int_{0}^{\infty}\int_{0}^{1}\sum_{r=\left[\frac{n}{2}\right]+1}^{\infty}e^{-8\pi^2r^2t}\cos^2(2\pi rx)\cos^2(2\pi ry)\,dy\,dt\\
&=2\int_{0}^{\infty}\sum_{r=\left[\frac{n}{2}\right]+1}^{\infty}e^{-8\pi^2r^2t}\cos^2(2\pi rx)\,dt\leq 2\int_{0}^{\infty}\sum_{r=\left[\frac{n}{2}\right]+1}^{\infty}e^{-8\pi^2r^2t}\,dt\\
&\leq 2\sum_{r=\left[\frac{n}{2}\right]+1}^{\infty}\frac{1}{8\pi^2r^2}\leq \frac{C}{n},\\
I^c_4&=2\int_{0}^{\infty}\sum_{r=1}^{\left[\frac{n}{2}\right]}e^{-8\pi^2r^2t}\left(\varphi_{c,r}(x)-\varphi^n_{c,r}(x)\right)^2\,dt\\
&\leq 4\int_{0}^{\infty}\sum_{r=1}^{\left[\frac{n}{2}\right]}e^{-8\pi^2r^2t}\left\{\left(\varphi_{c,r}(x)-\varphi_{c,r}(\kappa_n(x))\right)^2+\left(\varphi^n_{c,r}(\kappa_n(x))-\varphi^n_{c,r}(x)\right)^2\right\}\,dt\\
&\leq 8\int_{0}^{\infty}\sum_{r=1}^{\left[\frac{n}{2}\right]}e^{-8\pi^2 r^2 t}\times \left(\frac{2\pi r}{n}\right)^2\,dt\leq \frac{32\pi^2}{n^2}\times \frac{1}{8\pi^2}\times \left[\frac{n}{2}\right] \leq \frac{C}{n}.
\end{align*}
Moreover,
\begin{align*}
I^c_3&\leq 2\int_{0}^{\infty}\sum_{r=1}^{\left[\frac{n}{2}\right]}\left|e^{-4\pi^2 r^2t}-e^{\lambda^n_r t}\right|^2\,dt\leq 2\int_{0}^{\infty}\sum_{r=1}^{\left[\frac{n}{2}\right]}e^{-8\pi^2 r^2C^n_rt}\times \left(1-e^{-4\pi^2r^2(1-c^n_r)t}\right)^2\,dt\\
&\leq 2\int_{0}^{\infty}\sum_{r=1}^{\left[\frac{n}{2}\right]}e^{-r^2t}\times \left(4\pi^2r^2(1-c^n_r)t\right)^2\,dt\leq \frac{C}{n^4}\int_{0}^{\infty}\sum_{r=1}^{\left[\frac{n}{2}\right]}e^{-r^2t}\times r^8t^2\,dt\leq \frac{C}{n^4}\sum_{r=1}^{\left[\frac{n}{2}\right]}r^2\leq \frac{C}{n},
\end{align*}
where $c^n_r:=\sin^2\frac{r\pi}{n}/(\frac{r\pi}{n})^2\in \left[\frac{4}{\pi^2},1\right]$ for $r=1,2,\ldots,\left[\frac{n}{2}\right]$. Here we have used the inequalities $1-e^{-z}\leq z$ and $1-\frac{\sin^2 z}{z^2}\leq \frac{z^2}{3}$ for $z>0$.

In the sequel we use the notation $\tilde{G}_n(t,x,y):=1+2\sum_{r=1}^{\left[\frac{n}{2}\right]}e^{-4\pi^2r^2t}\varphi^n_{c,r}(x)\varphi_{c,r}(y)$.\\
Note that for every function $u\in \mathcal{C}^1([0,1])$,
\begin{align*}
&\int_{0}^{1}\left|u(y)-u(\kappa_n(y))\right|^2\,dy=\int_{0}^{1}\Big|\int_{\kappa_n(y)}^{y}\frac{d}{dt}u(t)\,dt\Big|^2\,dy\leq \frac{1}{n}\int_{0}^{1}\int_{\kappa_n(y)}^{y}\Big|\frac{d}{dt}u(t)\Big|^2\,dt\,dy\\
\leq&\; \frac{1}{n}\int_{0}^{1}\int_{t}^{t+\frac{1}{n}}\Big|\frac{d}{dt}u(t)\Big|^2\,dy\,dt\leq \frac{1}{n^2}\int_{0}^{1}\Big|\frac{d}{dt}u(t)\Big|^2\,dt.
\end{align*}
Thus,
\begin{align*}
I^c_2&=\int_{0}^{\infty}\int_{0}^{1}\left|\tilde{G}_n(t,x,y)-\tilde{G}_n(t,x,\kappa_n(y))\right|^2\,dy\,dt\leq \frac{1}{n^2}\int_{0}^{\infty}\int_{0}^{1}\Big|\frac{d}{dy}\tilde{G}_n(t,x,y)\Big|^2\,dy\,dt\\
&=\frac{1}{n^2}\int_{0}^{\infty}\int_{0}^{1}\Big|2\sum_{r=1}^{\left[\frac{n}{2}\right]}e^{-4\pi^2r^2t}\varphi^n_{c,r}(x)\times 2\pi r\sin(2\pi ry)\Big|^2\,dy\,dt\\
&\leq \frac{C}{n^2}\int_{0}^{\infty}\int_{0}^{1}\sum_{r=1}^{\left[\frac{n}{2}\right]}e^{-8\pi^2r^2t}\times r^2\sin^2(2\pi ry)\,dy\,dt\leq \frac{C}{n^2}\int_{0}^{\infty}\sum_{r=1}^{\left[\frac{n}{2}\right]}e^{-8\pi^2r^2t}\times r^2\,dt \leq \frac{C}{n}. 
\end{align*}
Similarly, we can get $I^s_k\leq \frac{C}{n},k=1,2,3,4$.

When $n$ is even, there is one more term denoted by $I_5$ to be estimated,
\begin{align*}
I_5\leq &C\int_{0}^{\infty}\int_{0}^{1}\Big|e^{-4n^2t}\varphi^n_{c,\frac{n}{2}}(x)\cos(2\pi \times\frac{n}{2}\times\kappa_n(y))\Big|^2\,dy\,dt\leq \frac{C}{n^2}.
\end{align*}
Combining the estimations of $I^c_k$, $I^s_k,k=1,2,3,4$ and $I_5$, we finish the proof of $(\romannumeral1)$.\\
$(\romannumeral2)$ The proof follows the same line as that of $(\romannumeral1)$. Making use of the inequality $e^{-z^2}\leq C(\alpha)z^{-\alpha}$ for $z>0,\alpha >0$, we get
\begin{align*}
\sum_{r=\left[\frac{n}{2}\right]+1}^{\infty}e^{-8\pi^2 r^2t}\leq C(\alpha)\sum_{r=\left[\frac{n}{2}\right]+1}^{\infty}r^{-2\alpha}t^{-\alpha}\leq C(\alpha)\int_{\left[\frac{n}{2}\right]}^{\infty}x^{-2\alpha}\,dx\times t^{-\alpha}\leq C(\alpha)n^{1-2\alpha}t^{-\alpha}
\end{align*}
for $\alpha>\frac{1}{2}$, and
\begin{align*}
&\sum_{r=1}^{\left[\frac{n}{2}\right]}e^{-8\pi^2 r^2t}\times r^2n^{-2}\leq C(\alpha)\sum_{r=1}^{\left[\frac{n}{2}\right]}r^{2-2\alpha}t^{-\alpha}n^{-2}\\
\leq&\; C(\alpha)\int_{0}^{\left[\frac{n}{2}\right]+1}x^{2-2\alpha}\,dx\times t^{-\alpha}n^{-2}\leq C(\alpha)n^{1-2\alpha}t^{-\alpha}
\end{align*}
for $\alpha<1$.

Similarly, we have
\begin{align*}
\sum_{r=1}^{\left[\frac{n}{2}\right]}e^{-r^2t}\times t^2r^8n^{-4}\leq C\sum_{r=1}^{\left[\frac{n}{2}\right]}r^{8-2\gamma}t^{-\gamma}n^{-4}\leq Cn^{5-2\gamma}t^{2-\gamma}=:C(\alpha)n^{1-2\alpha}t^{-\alpha}
\end{align*}
for $\alpha<2$, and 
$
e^{-8n^2t}\leq Cn^{-2\alpha}t^{-\alpha}
$
for $\alpha>0$.

Combining the terms above, we finish the proof.
\end{proof}

\subsection{Proof of \eqref{mild full}}\label{sec5.4}
It is clear that \eqref{fullscheme} is equivalent to
\begin{equation*}
\begin{aligned}
u^{n,\tau}(t_{i+1},x_j)=&\;(1-\theta \tau \Delta_n)^{-1}\left(1+(1-\theta)\tau \Delta_n\right)u^{n,\tau}(t_i,\cdot)(x_j)\\
&+(1-\theta \tau \Delta_n)^{-1}\lambda\tau\sigma(u^{n,\tau}(t_i,\cdot))\square_{n,\tau}W(t_i,\cdot)(x_j).
\end{aligned}
\end{equation*}
It is easy to check that
$$
\Delta_ne_j(x_k)=\lambda^n_je_j(x_k),\quad k,j=0,1,\ldots, n-1.
$$
Let $R_1:=(1-\theta \tau\Delta_n)^{-1}, R_2:=\left(1+(1-\theta)\tau\Delta_n\right)$, $R_{1,l}:=(1-\theta \tau\lambda^n_l)^{-1}, R_{2,l}:=\left(1+(1-\theta)\tau \lambda^n_l\right)$.\\
 By iteration, we get
\begin{equation*}
\begin{aligned}
&u^{n,\tau}(t_i,\cdot)(x_j)\\
=&\left(R_1R_2\right)^iu^{n,\tau}(t_0,\cdot)(x_j)+\lambda\sum_{k=0}^{i-1}(R_1R_2)^kR_1\tau \sigma\left(u^{n,\tau}(t_{i-1-k},\cdot)\right)\square_{n,\tau} W(t_{i-1-k},\cdot)(x_j)\\
=&\sum_{l=0}^{n-1}\sum_{k=0}^{n-1}\frac{1}{n}\left(R_{1,l}R_{2,l}\right)^iu_0\left(\frac{k}{n}\right)\bar{e}_l\left(\frac{k}{n}\right)e_l\left(\frac{j}{n}\right)\\
&+\lambda\sum_{k=0}^{i-1}\sum_{l=0}^{n-1}\sum_{q=0}^{n-1}\frac{1}{\sqrt{n}}\left(R_{1,l}R_{2,l}\right)^{i-1-k}R_{1,l}\sigma\left(u^{n,\tau}\left(t_k,\frac{q}{n}\right)\right)\left(W^n_q(t_{k+1})-W^n_q(t_k)\right)\bar{e}_l\left(\frac{q}{n}\right)e_l\left(\frac{j}{n}\right)\\
=&:\int_0^1G^{n,\tau}_1(t_i,x_j,y)u_0(\kappa_n(y))\,dy\\
&+\lambda\int_0^{t_i}\int_0^1G^{n,\tau}_{2}(t_i-\kappa_{\tau}(s)-\tau,x_j,y)\sigma(u^{n,\tau}(\kappa_{\tau}(s),\kappa_n(y)))\,dW(s,y),
\end{aligned}
\end{equation*}
where $\kappa_{\tau}(s):=\left[\frac{s}{\tau}\right]\tau$ and
\begin{align*}
&G^{n,\tau}_1(t,x_j,y)
:=\sum_{l=0}^{n-1}\left(R_{1,l}R_{2,l}\right)^{\left[\frac{t}{\tau}\right]}e_l(x_j)\bar{e}_l(\kappa_n(y)),\\
&G^{n,\tau}_2(t,x_j,y)
:=\sum_{l=0}^{n-1}\left(R_{1,l}R_{2,l}\right)^{\left[\frac{t}{\tau}\right]}R_{1,l}e_l(x_j)\bar{e}_l(\kappa_n(y)).
\end{align*}
By the linear interpolation with respect to the space variable, and denoting 
\begin{align*}
&G^{n,\tau}_1(t,x,y)
:=\sum_{l=0}^{n-1}\left(R_{1,l}R_{2,l}\right)^{\left[\frac{t}{\tau}\right]}e^n_l(x)\bar{e}_l(\kappa_n(y)),\\
&G^{n,\tau}_2(t,x,y)
:=\sum_{l=0}^{n-1}\left(R_{1,l}R_{2,l}\right)^{\left[\frac{t}{\tau}\right]}R_{1,l}e^n_l(x)\bar{e}_l(\kappa_n(y)),
\end{align*}
it is clear that $u^{n,\tau}$ satisfies the integral equation
\begin{align*}
u^{n,\tau}(t,x)=&\int_{0}^{1}G^{n,\tau}_1(t,x,y)u_0(\kappa_n(y))\,dy\\
&+\lambda\int_{0}^{t}\int_{0}^{1}G^{n,\tau}_2(t-\kappa_{\tau}(s)-\tau,x,y)\sigma \left(u^{n,\tau}(\kappa_{\tau}(s),\kappa_n(y))\right)\,dW(s,y),
\end{align*}
almost surely for every $t=i\tau, x\in [0,1]$. 
Hence we get \eqref{mild full}.

\subsection{Proof of Lemma \ref{lemma4.8}}\label{aplemma4.8}
\begin{proof}
$(\romannumeral1)$ By expanding the real and imaginary parts, the fully discrete Green functions can be written as follows,
\begin{align*}
G^{n,\tau}_1(t,x,y)
=&\;1+2\tilde{\sum}_{l}\left(R_{1,l}R_{2,l}\right)^{\left[\frac{t}{\tau}\right]}\left(\varphi^n_{c,l}(x)\varphi_{c,l}(\kappa_n(y))+\varphi^n_{s,l}(x)\varphi_{s,l}(\kappa_n(y))\right)\\
&+\left(R_{1,\frac{n}{2}}R_{2,\frac{n}{2}}\right)^{\left[\frac{t}{\tau}\right]}g_n(x,y),\\
G^{n,\tau}_2(t,x,y)=&\;1+2\tilde{\sum}_{l}\left(R_{1,l}R_{2,l}\right)^{\left[\frac{t}{\tau}\right]}R_{1,l}\left(\varphi^n_{c,l}(x)\varphi_{c,l}(\kappa_n(y))+\varphi^n_{s,l}(x)\varphi_{s,l}(\kappa_n(y))\right)\\
&+\left(R_{1,\frac{n}{2}}R_{2,\frac{n}{2}}\right)^{\left[\frac{t}{\tau}\right]}R_{1,\frac{n}{2}}g_n(x,y),
\end{align*}
where
\begin{equation*}
\begin{aligned}
g_n(x,y):=
\left\{\begin{array}{ll}
0, &n=2k+1,\vspace{1 ex}\\
\varphi^n_{c,\frac{n}{2}}(x)\varphi_{c,\frac{n}{2}}(\kappa_n(y)), &n=2k+2,\quad k=1,2,\ldots.
\end{array}
\right.
\end{aligned}
\end{equation*}

It suffices to prove $$\int_{0}^{\infty}\int_{0}^{1}\left|G^n(t,x,y)-G^{n,\tau}_2(t,x,y)\right|^2\,dy\,dt\leq C\sqrt{\tau}.$$ Let's first assume that $n$ is odd. It is obvious that
\begin{align*}
\int_{0}^{\infty}\int_{0}^{1}\left|G^n(t,x,y)-G^{n,\tau}_2(t,x,y)\right|^2\,dy\,dt
\leq\;4\sum_{j=1}^{[\frac{n}{2}]}\int_{0}^{\infty}\left|e^{\lambda^n_jt}-(R_{1,j}R_{2,j})^{\left[\frac{t}{\tau}\right]}R_{1,j}\right|^2\,dt.
\end{align*}

We use the idea and notations in Lemma \ref{lemma4.1} $(\romannumeral3)$. 
We split the set $\left\{j:1,2,\ldots,\left[\frac{n}{2}\right]\right\}$ into two parts, i.e.
\begin{align*}
\left\{j:1,2,\ldots,\left[\frac{n}{2}\right]\right\}&=\left\{j:R_{1,j}R_{2,j}\ge \frac{1}{2}\right\}\cup \left\{j:-1+\epsilon\leq R_{1,j}R_{2,j}<\frac{1}{2}\right\}\\
&=A_1\cup A_2.
\end{align*}
Recall that for $j\in A_1,$ $\frac{1}{2}<R_{2,j}<1$ and $-\lambda^n_j\tau\leq R_{3,j}\leq -2\lambda^n_j\tau$, and for $j\in A_2$, $\left|R_{1,j}R_{2,j}\right|\leq 1-\epsilon$. Moreover, we have\\
$$A_1\subset \left\{j:1\leq j\leq \frac{1}{4}\sqrt{\frac{1}{(2-\theta)\tau}}\right\} \text{ and } A_2\subset \left\{j:\frac{1}{2\pi}\sqrt{\frac{1}{(2-\theta)\tau}}<j\leq \left[\frac{n}{2}\right]\right\}.$$

Hence, for $\tau<1,$ we have
\begin{align*}
&\sum_{j\in A_1}\int_{0}^{\infty}\left|e^{\lambda^n_j t}-\left(R_{1,j}R_{2,j}\right)^{\left[\frac{t}{\tau}\right]}R_{1,j}\right|^2\,dt\\
\leq&\sum_{j\in A_1}4\int_{0}^{\infty}\left|e^{\lambda^n_j t}-\exp\left\{-R_{3,j}\frac{t}{\tau}\right\}\right|^2\,dt\\
&+\sum_{j\in A_1}4\int_{0}^{\infty}\left|\exp\left\{-R_{3,j}\frac{t}{\tau}\right\}-\exp\left\{-R_{3,j}\left[\frac{t}{\tau}\right]\right\}\right|^2\,dt\\
&+\sum_{j\in A_1}4\int_{0}^{\infty}\left|\exp\left\{-R_{3,j}\left[\frac{t}{\tau}\right]\right\}-\exp\left\{-R_{3,j}\left[\frac{t}{\tau}\right]\right\}R_{1,j}\right|^2\,dt\\
&+\sum_{j\in A_1}4\int_{0}^{\infty}\left|\exp\left\{-R_{3,j}\left[\frac{t}{\tau}\right]\right\}R_{1,j}-\left(R_{1,j}R_{2,j}\right)^{\left[\frac{t}{\tau}\right]}R_{1,j}\right|^2\,dt\\
=&:4(I_1+I_2+I_3+I_4).
\end{align*}
For the term $I_1,$
\begin{align*}
I_1\leq&\sum_{j\in A_1}\int_{0}^{\infty}e^{2\lambda^n_j t}\times \left|1-\exp\left\{\lambda^n_j \tau \left(\frac{1}{1+(1-\theta)\tau \lambda^n_j}-1\right)\frac{t}{\tau}\right\}\right|^2\,dt\\
=&\sum_{j\in A_1}\int_{0}^{\infty}e^{2\lambda^n_j t}\times \left|1-\exp\left\{-\frac{(1-\theta)\left(\tau \lambda^n_j\right)^2}{1+(1-\theta)\tau\lambda^n_j}\frac{t}{\tau}\right\}\right|^2\,dt\\
\leq& \sum_{1\leq j\leq \frac{1}{4}\sqrt{\frac{1}{(2-\theta)\tau}}}C\int_{0}^{\infty}e^{2\lambda^n_j t}\times \left(\frac{t}{\tau}\right)^2\left(\lambda^n_j \tau\right)^4\,dt
\leq \sum_{1\leq j\leq \frac{1}{4}\sqrt{\frac{1}{(2-\theta)\tau}}}C\int_{0}^{\infty}e^{-32j^2t}\times j^8\tau^2t^2\,dt\\
\leq& \sum_{1\leq j\leq \frac{1}{4}\sqrt{\frac{1}{(2-\theta)\tau}}}C\tau^2j^2\leq C\tau^2\int_{0}^{\frac{1}{4}\sqrt{\frac{1}{(2-\theta)\tau}}+1}x^2\,dx\leq C(\theta)\sqrt{\tau},
\end{align*}
where we have used the inequality $1-e^{-z}\leq z,z>0$.\\
For the term $I_2,$ by applying the mean value theorem, we get 
\begin{align*}
I_2\leq& \sum_{j\in A_1}\int_{0}^{\infty}\exp\left\{-2R_{3,j}\left[\frac{t}{\tau}\right]\right\}\left|R_{3,j}\right|^2\,dt\\
\leq& \sum_{j\in A_1}C\int_{0}^{\infty}e^{-32j^2\tau \left[\frac{t}{\tau}\right]}\times j^4\tau^2\,dt\leq \sum_{1\leq j\leq\frac{1}{4}\sqrt{\frac{1}{(2-\theta)\tau}}}C\int_{0}^{\infty}e^{-32j^2\tau\left(\frac{t}{\tau}-1\right)}\times j^4\tau^2\,dt\\
\leq& \sum_{1\leq j\leq\frac{1}{4}\sqrt{\frac{1}{(2-\theta)\tau}}}C(\theta)\int_{0}^{\infty}e^{-32j^2t}\times j^4\tau^2\,dt\leq C(\theta)\tau^2\times \sum_{1\leq j\leq\frac{1}{4}\sqrt{\frac{1}{(2-\theta)\tau}}}j^2\leq C(\theta)\sqrt{\tau}.
\end{align*}
For terms $I_3$ and $I_4$,
\begin{align*}
I_3\leq& \sum_{j\in A_1}\int_{0}^{\infty}\exp\left\{-2R_{3,j}\left[\frac{t}{\tau}\right]\right\}\left|1-\frac{1}{1-\theta \tau \lambda^n_j}\right|^2\,dt\\
\leq& \sum_{1\leq j\leq\frac{1}{4}\sqrt{\frac{1}{(2-\theta)\tau}}}C(\theta)\int_{0}^{\infty}e^{-32j^2t}\times j^4\tau^2\,dt\leq C(\theta)\tau^2\times \sum_{1\leq j\leq\frac{1}{4}\sqrt{\frac{1}{(2-\theta)\tau}}}j^2\leq C(\theta)\sqrt{\tau},
\end{align*}
\begin{align*}
I_4\leq &\sum_{j\in A_1}\int_{0}^{\infty}\exp\left\{-2\left[\frac{t}{\tau}\right]\ln \left(1+R_{3,j}\right)\right\}\\
&\qquad\qquad\times\left|1-\exp\left\{\left[\frac{t}{\tau}\right]\left(-R_{3,j}+\ln \left(1+R_{3,j}\right)\right)\right\}\right|^2\left(\frac{1}{1-\theta \tau \lambda^n_j}\right)^2\,dt\\
\leq &\sum_{j\in A_1}\int_{0}^{\infty}\exp\left\{-2\left[\frac{t}{\tau}\right]\ln \left(1-\lambda^n_j\tau\right)\right\}\\
&\qquad\qquad\times\left|1-\exp\left\{\left[\frac{t}{\tau}\right]\left(2\lambda^n_j\tau+\ln \left(1-2\lambda^n_j\tau\right)\right)\right\}\right|^2\,dt\\
\leq &\sum_{1\leq j\leq\frac{1}{4}\sqrt{\frac{1}{(2-\theta)\tau}}}\int_{0}^{\infty}\exp\left\{2C_2\lambda^n_j \tau \left[\frac{t}{\tau}\right]\right\}\times\left|1-\exp\left\{-\left[\frac{t}{\tau}\right]C_1\left(-2\lambda^n_j\tau\right)^2\right\}\right|^2\,dt\\
\leq &\sum_{1\leq j\leq\frac{1}{4}\sqrt{\frac{1}{(2-\theta)\tau}}}C(\theta)\int_{0}^{\infty}e^{-32C_2j^2t}\times t^2j^8\tau^2\,dt\leq C(\theta)\tau^2\times \sum_{1\leq j\leq\frac{1}{4}\sqrt{\frac{1}{(2-\theta)\tau}}}j^2\leq C(\theta)\sqrt{\tau},
\end{align*}
where we have used the fact that $z:=-\lambda^n_j\tau\in \left(0,\frac{\pi^2}{4(2-\theta)}\right]$, $j=1,2,\ldots\,\left[\frac{n}{2}\right]$ because of $j^2\tau\leq \frac{1}{16(2-\theta)}$. For such $z$, we have $-C_1z^2\leq-z+\ln (1+z)\leq 0$ and $\ln (1+z)\ge C_2z$ for some $C_1,C_2>0$.

It remains to prove
\begin{align*}
I_5:=\sum_{j\in A_2}\int_{0}^{\infty}e^{2\lambda^n_j t}\,dt
\leq \sum_{\frac{1}{2\pi}\sqrt{\frac{1}{(2-\theta)\tau}}<j\leq \left[\frac{n}{2}\right]}\int_{0}^{\infty}e^{-32j^2t}\,dt\leq \sum_{\frac{1}{2\pi}\sqrt{\frac{1}{(2-\theta)\tau}}<j\leq \left[\frac{n}{2}\right]}\frac{C}{j^2}\leq C(\theta)\sqrt{\tau}
\end{align*}
and
\begin{align*}
&I_6:=\sum_{j\in A_2}\int_{0}^{\infty}\left(R_{1,j}R_{2,j}\right)^{2\left[\frac{t}{\tau}\right]}R^2_{1,j}\,dt
\leq \sum_{j\in A_2}\int_{0}^{\infty}(1-\epsilon)^{2\left[\frac{t}{\tau}\right]}\times (1+16\theta j^2\tau)^{-2}\,dt\\
\leq &\int_{\frac{1}{2\pi}\sqrt{\frac{1}{(2-\theta)\tau}}}^{\left[\frac{n}{2}\right]}\int_0^{\infty}(1-\epsilon)^{2\left[\frac{t}{\tau}\right]}\times (1+16\theta x^2\tau)^{-2}\,dt\,dx\quad \left(\text{let }r=\frac{t}{\tau},\;y=x\sqrt{\tau}\right)\\
= &\;\sqrt{\tau}\int_{\frac{1}{2\pi}\sqrt{\frac{1}{2-\theta}}}^{\left[\frac{n}{2}\right]\sqrt{\tau}}\int_0^{\infty}(1-\epsilon)^{2[r]}\times (1+16\theta y^2)^{-2}\,dr\,dy\\
= &\;\sqrt{\tau}\sum_{k=0}^{\infty}(1-\epsilon)^{2k}\int_{\frac{1}{2\pi}\sqrt{\frac{1}{2-\theta}}}^{\left[\frac{n}{2}\right]\sqrt{\tau}}(1+16\theta y^2)^{-2}\,dy
\leq C(\theta)\sqrt{\tau},
\end{align*}
where in the last line we use the same analysis as in Lemma \ref{lemma4.1} $(\romannumeral3)$.

When $n$ is even, what we need to prove is the difference of the term of $j=\frac{n}{2}$ in the expansion of $G^n$ and $G^{n,\tau}_2$, i.e.,
\begin{align*}
&\int_{0}^{\infty}\int_{0}^{1}\left|\left(e^{-4n^2}-\left(R_{1,\frac{n}{2}}R_{2,\frac{n}{2}}\right)^{\left[\frac{t}{\tau}\right]}R_{1,\frac{n}{2}}\right)g_n(x,y)\right|^2\,dy\,dt\\
\leq&\;\int_{0}^{\infty}\left|e^{-4n^2}-\left(R_{1,\frac{n}{2}}R_{2,\frac{n}{2}}\right)^{\left[\frac{t}{\tau}\right]}R_{1,\frac{n}{2}}\right|^2\,dt\\
\leq&\; 2\int_{0}^{\infty}e^{-8n^2t}\,dt+2\int_{0}^{\infty}\left(1-\epsilon\right)^{2\left[\frac{t}{\tau}\right]}(1+4\theta n^2\tau)^{-2}\,dt\\
\leq&\;\frac{C}{n^2}+C\int_{0}^{\infty}\left(1-\epsilon\right)^{2\left[\frac{t}{\tau}\right]}\,dt\leq \frac{C}{n^2}+C\tau\sum_{k=0}^{\infty}\left(1-\epsilon\right)^{2k}\leq\frac{C}{n^2}+C\tau.
\end{align*}
Hence the proof of $(\romannumeral1)$ is completed.\\
$(\romannumeral2)$
Let's suppose that $n$ is odd since the even case can be proved similarly.

 For the cases of $\theta\in[0,\frac{1}{2}]$ and $\theta=1,$ it suffices to prove for any $\frac{1}{2}<\alpha<2$,
\begin{align}\label{prove}
\int_{0}^{1}\left|G^n(t,x,y)-G^{n,\tau}_1(t,x,y)\right|^2\,dy\leq C\tau^{\alpha-\frac{1}{2}}\left(\left[\frac{t}{\tau}\right]\tau\right)^{-\alpha}
\end{align}
with some $C:=C(\alpha,\theta)>0$ for all $x\in[0,1],t\ge \tau$.
The proof of \eqref{prove} follows the same order of the terms $I_i,i=1,2,\ldots,6$ in $(\romannumeral1)$ after replacing $G^{n,\tau}_2$ by $G^{n,\tau}_1$ and removing the integral with $t$, we still denote them by $I_i,i=1,2,\ldots,6.$\\
 When $t\ge \tau$, we have for $\alpha<2$,
\begin{align*}
I_1=&\sum_{j\in A_1}e^{2\lambda^n_j t}\times \left|1-\exp\left\{\lambda^n_j \tau\left(\frac{1}{1+(1-\theta)\tau \lambda^n_j}-1\right)\frac{t}{\tau}\right\}\right|^2\\
\leq& \sum_{1\leq j\leq \frac{1}{4}\sqrt{\frac{1}{(2-\theta)\tau}}}Ce^{2\lambda^n_j \left[\frac{t}{\tau}\right]\tau}\times j^8\tau^4\left(\left[\frac{t}{\tau}\right]+1\right)^2\\
\leq &\sum_{1\leq j\leq \frac{1}{4}\sqrt{\frac{1}{(2-\theta)\tau}}}C(\gamma)\left(j^2\tau \left[\frac{t}{\tau}\right]\right)^{-\gamma}j^8\tau^4\left[\frac{t}{\tau}\right]^2\quad (\text{let }\alpha=\gamma-2)\\
\leq& C(\alpha,\theta)\tau^{\alpha-\frac{1}{2}}\left(\left[\frac{t}{\tau}\right]\tau\right)^{-\alpha},
\end{align*}
and for $\alpha<2$,
\begin{align*}
I_2=&\sum_{j\in A_1}\exp\left\{-2R_{3,j}\left[\frac{t}{\tau}\right]\right\}\left|R_{3,j}\right|^2
\leq \sum_{1\leq j\leq \frac{1}{4}\sqrt{\frac{1}{(2-\theta)\tau}}}C(\alpha)\left(j^2\tau \left[\frac{t}{\tau}\right]\right)^{-\alpha}j^4\tau^2\\
\leq& C(\alpha,\theta)\tau^{\alpha-\frac{1}{2}}\left(\left[\frac{t}{\tau}\right]\tau\right)^{-\alpha}.
\end{align*}
Terms $I_3$ and $I_4$ can be estimated in a similar way.\\
By using the inequality $e^{-z^2}\leq C(\beta)z^{-\beta},z>0,\beta>0,$ we get for $\alpha>\frac{1}{2}$,
\begin{align*}
&I_5=\sum_{\frac{1}{2\pi}\sqrt{\frac{1}{(2-\theta)\tau}}<j\leq \left[\frac{n}{2}\right]}e^{2\lambda^n_jt}
\leq \sum_{\frac{1}{2\pi}\sqrt{\frac{1}{(2-\theta)\tau}}<j\leq \left[\frac{n}{2}\right]}e^{-32j^2\left[\frac{t}{\tau}\right]\tau}
\leq \sum_{\frac{1}{2\pi}\sqrt{\frac{1}{(2-\theta)\tau}}<j\leq \left[\frac{n}{2}\right]}C(\alpha)\left(j^2\left[\frac{t}{\tau}\right]\tau \right)^{-\alpha}\\
&\leq C(\alpha)\int_{\frac{1}{2\pi}\sqrt{\frac{1}{(2-\theta)\tau}}}^{\left[\frac{n}{2}\right]}x^{-2\alpha}\,dx\times \left(\left[\frac{t}{\tau}\right]\tau \right)^{-\alpha}\leq C(\alpha,\theta)\tau^{\alpha-\frac{1}{2}}\left(\left[\frac{t}{\tau}\right]\tau\right)^{-\alpha},
\end{align*}
and the remaining term $I_6$ can be estimated as follows.\\
For $\theta\in[0,\frac{1}{2}]$, because $n^2\tau$ is bounded, we have for $\alpha>0$, 
\begin{align*}
\sum_{j\in A_2}\left(R_{1,j}R_{2,j}\right)^{2\left[\frac{t}{\tau}\right]}
\leq \frac{1}{\sqrt{\tau}}\int_{\frac{1}{2\pi}\sqrt{\frac{1}{2-\theta}}}^{\left[\frac{n}{2}\right]\sqrt{\tau}}(1-\epsilon)^{2\left[\frac{t}{\tau}\right]}\,dy\leq \frac{C(\theta)}{\sqrt{\tau}}e^{-2\left[\frac{t}{\tau}\right]\ln(1-\epsilon)^{-1}}\leq \frac{C(\theta,\alpha)}{\sqrt{\tau}}\left[\frac{t}{\tau}\right]^{-\alpha}.
\end{align*}
For $\theta=1,$ we have for $\alpha>0,\;t\ge \tau$,
\begin{align*}
&\sum_{\frac{1}{2\pi}\sqrt{\frac{1}{\tau}}<j\leq \left[\frac{n}{2}\right]}\left(1-\tau\lambda^n_j\right)^{-2\left[\frac{t}{\tau}\right]}\leq \sum_{\frac{1}{2\pi}\sqrt{\frac{1}{\tau}}<j\leq \left[\frac{n}{2}\right]}(1+16j^2\tau)^{-2\left[\frac{t}{\tau}\right]}
\leq \frac{1}{\sqrt{\tau}}\int_{\frac{1}{2\pi}}^{\infty}(1+16y^2)^{-2\left[\frac{t}{\tau}\right]}\,dy\\
&\leq \frac{1}{\sqrt{\tau}}\left(1+\frac{4}{\pi^2}\right)^{-\left[\frac{t}{\tau}\right]}\times \int_{\frac{1}{2\pi}}^{\infty}(1+16y^2)^{-\left[\frac{t}{\tau}\right]}\,dy
\leq C(\alpha)\frac{1}{\sqrt{\tau}}\left[\frac{t}{\tau}\right]^{-\alpha}.
\end{align*}

For the case of $\theta\in(\frac{1}{2},1),$
we split the integral $$\int_{0}^{1}\left(G^n(t,x,y)-G^{n,\tau}_1(t,x,y)\right)u_0(\kappa_n(y))\,dy$$ into two parts, i.e.,
\begin{align*}
&\int_{0}^{1}\left(G^n(t,x,y)-G^{n,\tau}_1(t,x,y)\right)u_0(\kappa_n(y))\,dy\\
=&\int_{0}^{1}\left(G^n(t,x,y)-G^{n,\tau}_1(t,x,y)\right)u_0(y)\,dy\\
&+\int_{0}^{1}\left(G^n(t,x,y)-G^{n,\tau}_1(t,x,y)\right)\left(\left(u_0(\kappa_n(y))-u_0(y)\right)\right)\,dy\\
=&:Q_1+Q_2.
\end{align*}

We only consider the space grid points $\kappa_n(x)$ because the result of other points can be obtained by the inequality $(a+b)^2\leq 2a^2+2b^2$. Taking $x=\frac{i}{n},i=0,1,\ldots,n-1,$ then we have $\varphi^n_{c,j}(x)=\varphi_{c,j}(x)=\cos(2\pi jx),\;\varphi^n_{s,j}(x)=\varphi_{s,j}(x)=\sin(2\pi jx).$ So 
\begin{align*}
Q_1&=\int_0^1 2\sum_{j=1}^{\left[\frac{n}{2}\right]}\left(e^{\lambda^n_jt}-\left(R_{1,j}R_{2,j}\right)^{\left[\frac{t}{\tau}\right]}\right)\left(\varphi^n_{c,j}(x)\varphi_{c,j}(y)+\varphi^n_{s,j}(x)\varphi_{s,j}(y)\right)u_0(y)\,dy\\
&=2\sum_{j=1}^{\left[\frac{n}{2}\right]}\left(e^{\lambda^n_jt}-\left(R_{1,j}R_{2,j}\right)^{\left[\frac{t}{\tau}\right]}\right)\int_{0}^{1}\cos\left(2\pi j(x-y)\right)u_0(y)\,dy.
\end{align*}
Suppose that the initial data $u_0\in\mathcal{C}^2([0,1])$, then by the integration by parts formula and $u_0(0)=u_0(1)$, we get for $j=1,\ldots,\left[\frac{n}{2}\right],$
\begin{align*}
&\int_{0}^{1}\cos\left(2\pi j(x-y)\right)u_0(y)\,dy=-\frac{1}{2\pi j}\int_0^1 u_0(y)\,d\sin(2\pi j(x-y))\\
=&\;-\frac{1}{2\pi j}\left(u_0(1)\sin(2\pi j(x-1))-u_0(0)\sin(2\pi jx)\right)+\frac{1}{2\pi j}\int_0^1\sin(2\pi j(x-y))u'_0(y)\,dy\\
=&\;\frac{1}{(2\pi j)^2}\int_0^1u'_0(y)\,d\cos(2\pi j(x-y))\\
=&\;\frac{1}{(2\pi j)^2}\left(u'_0(1)\cos(2\pi j(x-1))-u'_0(0)\cos(2\pi jx)-\int_0^1\cos(2\pi j(x-y))u''_0(y)\,dy\right)
\leq \frac{C}{j^2}.
\end{align*}
Hence,
\begin{align*}
Q_1&\leq C\sum_{j=1}^{\left[\frac{n}{2}\right]}\left|e^{\lambda^n_jt}-\left(R_{1,j}R_{2,j}\right)^{\left[\frac{t}{\tau}\right]}\right|\frac{1}{j^2}\\
&\leq \sum_{j\in A_1}C\left|e^{\lambda^n_j t}-\left(R_{1,j}R_{2,j}\right)^{\left[\frac{t}{\tau}\right]}\right|\frac{1}{j^2}+\sum_{j\in A_2}C\left|e^{\lambda^n_j t}-\left(R_{1,j}R_{2,j}\right)^{\left[\frac{t}{\tau}\right]}\right|\frac{1}{j^2}
=:Q_{11}+Q_{12}.
\end{align*}
For the term $Q_{12},$
\begin{align*}
Q_{12}&\leq \sum_{j\in A_2}Ce^{\lambda^n_j t}\times \frac{1}{j^2}+\sum_{j\in A_2}C\left|R_{1,j}R_{2,j}\right|^{\left[\frac{t}{\tau}\right]}\frac{1}{j^2}\\
&\leq\sum_{\frac{1}{2\pi}\sqrt{\frac{1}{(2-\theta)\tau}}<j\leq \left[\frac{n}{2}\right]}Ce^{-16j^2t}\times \frac{1}{j^2}+\sum_{\frac{1}{2\pi}\sqrt{\frac{1}{(2-\theta)\tau}}<j\leq \left[\frac{n}{2}\right]}C(1-\epsilon)^{\left[\frac{t}{\tau}\right]}\times \frac{1}{j^2}\\
&\leq C\left(e^{-\frac{4}{\pi^2(2-\theta)}\times \left[\frac{t}{\tau}\right]}+(1-\epsilon)^{\left[\frac{t}{\tau}\right]}\right)\int_{\frac{1}{2\pi}\sqrt{\frac{1}{(2-\theta)\tau}}}^{\left[\frac{n}{2}\right]}\frac{1}{x^2}\,dx
\leq C(\alpha,\theta)\left[\frac{t}{\tau}\right]^{-\alpha}\sqrt{\tau},
\end{align*}
where $\alpha>0$, and in the last line we use the inequality $e^{-z^2}\leq C(\alpha)z^{-\alpha},\alpha>0,z>0$.\\
For the term $Q_{11},$ by Cauchy-Schwarz inequality, we have for $\frac{1}{2}<\alpha<2,$
\begin{align*}
Q_{11}^2&\leq \sum_{j\in A_1}C\left|e^{\lambda^n_j t}-\left(R_{1,j}R_{2,j}\right)^{\left[\frac{t}{\tau}\right]}\right|^2\times \sum_{j\in A_1}\frac{1}{j^4}
\leq \sum_{j\in A_1}C\left|e^{\lambda^n_j t}-\left(\frac{1+(1-\theta)\tau\lambda^n_j}{1-\theta \tau\lambda^n_j}\right)^{\left[\frac{t}{\tau}\right]}\right|^2\\
&\leq C(\alpha,\theta)\tau^{\alpha-\frac{1}{2}}\left(\left[\frac{t}{\tau}\right]\tau\right)^{-\alpha},
\end{align*}
where the last step can be obtained as before.

For the term $Q_{2},$ since $u_0\in\mathcal{C}^2([0,1]),$ we have
\begin{align*}
Q_2^2
&\leq \frac{C}{n^2}\int_0^1\left|G^n(t,x,y)-G^{n,\tau}_1(t,x,y)\right|^2\,dy
\leq \frac{C}{n^2}\sum_{j=1}^{\left[\frac{n}{2}\right]}\left|e^{\lambda^n_jt}-\left(R_{1,j}R_{2,j}\right)^{\left[\frac{t}{\tau}\right]}\right|^2\\
&\leq \sum_{j\in A_1}\frac{C}{n^2}\left|e^{\lambda^n_j t}-\left(R_{1,j}R_{2,j}\right)^{\left[\frac{t}{\tau}\right]}\right|^2
+\sum_{j\in A_2}\frac{C}{n^2}\left|e^{\lambda^n_j t}-\left(R_{1,j}R_{2,j}\right)^{\left[\frac{t}{\tau}\right]}\right|^2=:Q_{21}+Q_{22},
\end{align*}
where 
\begin{align*}
Q_{22}\leq \frac{C}{n^2}\times \left[\frac{n}{2}\right]\times\left(e^{-\frac{8}{\pi^2(2-\theta)}\times \left[\frac{t}{\tau}\right]}+(1-\epsilon)^{2\left[\frac{t}{\tau}\right]}\right)
\leq\frac{C(\theta,\alpha)}{n}\left[\frac{t}{\alpha}\right]^{-\alpha}
\end{align*}
for $\alpha>0$, and $Q_{21}\leq C(\alpha,\theta)\tau^{\alpha-\frac{1}{2}}\left(\left[\frac{t}{\tau}\right]\tau\right)^{-\alpha}$ for $\frac{1}{2}<\alpha<2$.

Combining the above terms, we have for any $\frac{1}{2}<\alpha<2$,
\begin{align*}
\left|\int_{0}^{1}\left(G^n(t,x,y)-G^{n,\tau}_1(t,x,y)\right)u_0(\kappa_n(y))\,dy\right|^2
\leq 2Q_1^2+2Q_2^2\leq C(\alpha,\theta)\tau^{\alpha-\frac{1}{2}}\left(\left[\frac{t}{\tau}\right]\tau\right)^{-\alpha}.
\end{align*}
The proof is finished.
\end{proof}

\bibliographystyle{plain}
\bibliography{intermittencybib}

\begin{thebibliography}{10}

\bibitem{Cohen20}
R.~Anton, D.~Cohen, and L.~Quer-Sardanyons.
\newblock A fully discrete approximation of the one-dimensional stochastic heat
  equation.
\newblock {\em IMA J. Numer. Anal.}, 40(1):247--284, 2020.

\bibitem{AKLS06}
K.~Athreya and N.~Lahiri.
\newblock {\em Measure theory and probability theory}.
\newblock Springer Texts in Statistics. Springer, New York, 2006.

\bibitem{dynamo93}
P.~Baxendale and B.~Rozovski\u{\i}.
\newblock Kinematic dynamo and intermittence in a turbulent flow.
\newblock volume~73, pages 33--60. 1993.
\newblock Magnetohydrodynamic stability and dynamos (Chicago, IL, 1992).

\bibitem{feynman95}
L.~Bertini and N.~Cancrini.
\newblock The stochastic heat equation: {F}eynman-{K}ac formula and
  intermittence.
\newblock {\em J. Statist. Phys.}, 78(5-6):1377--1401, 1995.

\bibitem{CHL17}
Y.~Cao, J.~Hong, and Z.~Liu.
\newblock Approximating stochastic evolution equations with additive white and
  rough noises.
\newblock {\em SIAM J. Numer. Anal.}, 55(4):1958--1981, 2017.

\bibitem{anderson94}
R.~Carmona and S.~Molchanov.
\newblock Parabolic {A}nderson problem and intermittency.
\newblock {\em Mem. Amer. Math. Soc.}, 108(518):viii+125, 1994.

\bibitem{CHJ19}
C.~Chen, J.~Hong, and L.~Ji.
\newblock Mean-square convergence of a semidiscrete scheme for stochastic
  {M}axwell equations.
\newblock {\em SIAM J. Numer. Anal.}, 57(2):728--750, 2019.

\bibitem{CHJ192}
C.~Chen, J.~Hong, and L.~Ji.
\newblock Runge-{K}utta semidiscretizations for stochastic {M}axwell equations
  with additive noise.
\newblock {\em SIAM J. Numer. Anal.}, 57(2):702--727, 2019.

\bibitem{chenle17}
L.~Chen, M.~Cranston, D.~Khoshnevisan, and K.~Kim.
\newblock Dissipation and high disorder.
\newblock {\em Ann. Probab.}, 45(1):82--99, 2017.

\bibitem{chenxia15}
X.~Chen.
\newblock Precise intermittency for the parabolic {A}nderson equation with an
  {$(1+1)$}-dimensional time-space white noise.
\newblock {\em Ann. Inst. Henri Poincar\'{e} Probab. Stat.}, 51(4):1486--1499,
  2015.

\bibitem{CCHS20}
D.~Cohen, J.~Cui, J.~Hong, and L.~Sun.
\newblock Exponential integrators for stochastic {M}axwell's equations driven
  by {I}t\^{o} noise.
\newblock {\em J. Comput. Phys.}, 410:109382, 21, 2020.

\bibitem{CH19}
J.~Cui and J.~Hong.
\newblock Strong and weak convergence rates of a spatial approximation for
  stochastic partial differential equation with one-sided {L}ipschitz
  coefficient.
\newblock {\em SIAM J. Numer. Anal.}, 57(4):1815--1841, 2019.

\bibitem{CH20}
J.~Cui and J.~Hong.
\newblock Absolute continuity and numerical approximation of stochastic
  {C}ahn-{H}illiard equation with unbounded noise diffusion.
\newblock {\em J. Differential Equations}, 269(11):10143--10180, 2020.

\bibitem{CHL172}
J.~Cui, J.~Hong, and Z.~Liu.
\newblock Strong convergence rate of finite difference approximations for
  stochastic cubic {S}chr\"{o}dinger equations.
\newblock {\em J. Differential Equations}, 263(7):3687--3713, 2017.

\bibitem{CHS21}
J.~Cui, J.~Hong, and L.~Sun.
\newblock Weak convergence and invariant measure of a full discretization for
  parabolic {SPDE}s with non-globally {L}ipschitz coefficients.
\newblock {\em Stochastic Process. Appl.}, 134:55--93, 2021.

\bibitem{Green01}
A.~Davie and J.~Gaines.
\newblock Convergence of numerical schemes for the solution of parabolic
  stochastic partial differential equations.
\newblock {\em Math. Comp.}, 70(233):121--134, 2001.

\bibitem{KF09}
M.~Foondun and D.~Khoshnevisan.
\newblock Intermittence and nonlinear parabolic stochastic partial differential
  equations.
\newblock {\em Electron. J. Probab.}, 14:no. 21, 548--568, 2009.

\bibitem{FD12}
M.~Foondun and D.~Khoshnevisan.
\newblock An asymptotic theory for randomly forced discrete nonlinear heat
  equations.
\newblock {\em Bernoulli}, 18(3):1042--1060, 2012.

\bibitem{foondun14}
M.~Foondun and E.~Nualart.
\newblock On the behaviour of stochastic heat equations on bounded domains.
\newblock {\em ALEA Lat. Am. J. Probab. Math. Stat.}, 12(2):551--571, 2015.

\bibitem{GJK15}
N.~Georgiou, M.~Joseph, D.~Khoshnevisan, and S.~Shiu.
\newblock Semi-discrete semi-linear parabolic {SPDE}s.
\newblock {\em Ann. Appl. Probab.}, 25(5):2959--3006, 2015.

\bibitem{Gy98}
I.~Gy\"{o}ngy.
\newblock Lattice approximations for stochastic quasi-linear parabolic partial
  differential equations driven by space-time white noise. {I}.
\newblock {\em Potential Anal.}, 9(1):1--25, 1998.

\bibitem{Gy99}
I.~Gy\"{o}ngy.
\newblock Lattice approximations for stochastic quasi-linear parabolic partial
  differential equations driven by space-time white noise. {II}.
\newblock {\em Potential Anal.}, 11(1):1--37, 1999.

\bibitem{KD14}
D.~Khoshnevisan.
\newblock {\em Analysis of stochastic partial differential equations}, volume
  119 of {\em CBMS Regional Conference Series in Mathematics}.
\newblock Published for the Conference Board of the Mathematical Sciences,
  Washington, DC; by the American Mathematical Society, Providence, RI, 2014.

\bibitem{KDK15}
D.~Khoshnevisan and K.~Kim.
\newblock Non-linear noise excitation and intermittency under high disorder.
\newblock {\em Proc. Amer. Math. Soc.}, 143(9):4073--4083, 2015.

\bibitem{dissipation20}
D.~Khoshnevisan, K.~Kim, C.~Mueller, and S.~Shiu.
\newblock Dissipation in parabolic {SPDE}s.
\newblock {\em J. Stat. Phys.}, 179(2):502--534, 2020.

\bibitem{setting07}
J.~Posp\'{\i}\v{s}il and R.~Tribe.
\newblock Parameter estimates and exact variations for stochastic heat
  equations driven by space-time white noise.
\newblock {\em Stoch. Anal. Appl.}, 25(3):593--611, 2007.

\bibitem{talay91}
D.~Talay.
\newblock Approximation of upper {L}yapunov exponents of bilinear stochastic
  differential systems.
\newblock {\em SIAM J. Numer. Anal.}, 28(4):1141--1164, 1991.

\bibitem{Walsh05}
J.~Walsh.
\newblock Finite element methods for parabolic stochastic {PDE}'s.
\newblock {\em Potential Anal.}, 23(1):1--43, 2005.

\bibitem{xie16}
B.~Xie.
\newblock Some effects of the noise intensity upon non-linear stochastic heat
  equations on {$[0,1]$}.
\newblock {\em Stochastic Process. Appl.}, 126(4):1184--1205, 2016.

\bibitem{ZRS90}
Y.~Zel'Dovich, A.~Ruzma\u{\i}kin, and D.~Sokoloff.
\newblock {\em The almighty chance}, volume~20 of {\em World Scientific Lecture
  Notes in Physics}.
\newblock World Scientific Publishing Co., Inc., River Edge, NJ, 1990.
\newblock Translated from the Russian by Anvar Shukurov.

\end{thebibliography}

 \end{document}